\documentclass[a4paper, english, 10pt, onecolumn, oneside]{article}	%final
\usepackage[verbose, a4paper, hcentering]{geometry}	% customize page layout, total={418.3pt, 591.5pt}, height=.9\paperheight

\usepackage[dvipsnames]{xcolor}
\definecolor{greentuc}{rgb}{0.0, 0.38, 0.30}

\usepackage[utf8]{inputenc} % allow utf-8 input
\usepackage[T1]{fontenc}    % use 8-bit T1 fonts
\usepackage[pdftitle={Unbalanced Sinkhorn}, pdfauthor={Rajmadan Lakshmanan}, citecolor=greentuc, backref=page, urlcolor= greentuc, linkcolor= greentuc, unicode=true,  bookmarks=true, bookmarksnumbered=false, bookmarksopen=false, breaklinks=false, pdfborder={0 0 0}, pdfborderstyle={}, colorlinks=true]{hyperref}
\usepackage[numbers]{natbib}	% References: round, numbers
\usepackage{siunitx} % for \textbf{\num{77}}
\usepackage{booktabs}       % professional-quality tables
\usepackage{amsthm, mathtools, amssymb, nicefrac}
\usepackage{amsmath}
\usepackage[english]{babel} % To obtain English text with the blindtext package
\numberwithin{equation}{section}
\usepackage{unicodeMath}
\mathtoolsset{showonlyrefs}

\usepackage{microtype}      % microtypography
\usepackage{newtxtext, newtxmath}%	replaces Springer like postscript (PS) fonts (Times)
\usepackage{dsfont}   %	für \one und Erwartungswert   doublestroke
\usepackage{tikz}
\usetikzlibrary{calc,trees,positioning,arrows,fit,shapes,calc}
\usepackage{pgfplots}
\usepackage{subfig}
\usepackage{blindtext}
\pgfplotsset{compat=newest}

\usepackage[inline, shortlabels]{enumitem}           % customized list environments, as roman
\setlist[enumerate,1]{label=(\roman*)}  % 1st layer (i)
\newcommand{\one}{{\mathbf 1}} 		% \usepackage[sans]{dsfont}
\newcommand{\zero}{{\mathbf 0}} 		% \usepackage[sans]

%{dsfont}

\usepackage{xcolor}         % colors
\usepackage{siunitx}
\sisetup{
text-series-to-math = true ,
propagate-math-font = true
}

\setlength {\marginparwidth }{2cm}
\usepackage{todonotes}
\usepackage[export]{adjustbox}
\usepackage{mathtools}
\usepackage{subfig}
\usepackage{graphicx}

\usetikzlibrary{trees,matrix,arrows.meta}
\usepackage[ruled, vlined]{algorithm2e}			%	linesnumbered
\usepackage{orcidlink}
\usepackage{diagbox}
\usepackage{doi}

\newcounter{relctr} %% <- counter for relations
\everydisplay\expandafter{\the\everydisplay\setcounter{relctr}{0}} %% <- reset every eq
 %% <- label format

\newcommand{\KL}{Ⅾ_{\mathrm{KL}}}
\newcommand{\uot}{\mathrm{UOT}}
\newcommand{\ot}{\mathrm{OT}}
\newcommand{\mmd}{\mathrm{MMD}}

\newcommand{\lhs}{\mathrm{LHS}}
\newcommand{\rhs}{\mathrm{RHS}}

\newcommand{\rev}[2][black]{{\color{#1}#2}}       % command
\newenvironment{revise}[1][black]{\color{#1}}{}   % environment

\AtBeginDocument{}

%	╭───────────────────────────────────────────────
%	│		Textclass specific LaTeX commands.
\theoremstyle{plain}
	\newtheorem{theorem}{Theorem}[section]		% numberging by section
	\newtheorem{corollary}[theorem]{Corollary}
	\newtheorem{lemma}[theorem]{Lemma}
	\newtheorem{proposition}[theorem]{Proposition}
\theoremstyle{definition}
	\newtheorem{definition}[theorem]{Definition}
\theoremstyle{remark}
	\newtheorem{remark}[theorem]{Remark}

	        % Expectation
%\addbibresource{LiteraturAlois.bib}
%\addbibresource{LiteraturRaj.bib}

%\title{Fast Approximation of Unbalanced Optimal Transport and Maximum Mean Discrepancies}
%\title{Unbalanced Optimal Transport, Maximum Mean Discrepancies, and Fast evaluations}
\title{Unbalanced Optimal Transport and Maximum Mean Discrepancies: Interconnections and Rapid Evaluation}
%

%Fast Evaluation Strategies for the UOT and MMD Interconnection
%The Interconnection of UOT and MMD: Pathways to Rapid Evaluation
%Synergizing UOT and MMD: Advances in Fast Computational Evaluations
%Optimizing the UOT-MMD Interplay for Swift Evaluations
\author{%
	Rajmadan Lakshmanan%
		\thanks{Faculty of Mathematics, University of Technology, Chemnitz, Germany}\,
		\footnote{\orcidlink{0009-0006-3273-9063} \href{https://orcid.org/0009-0006-3273-9063}{https://orcid.org/0009-0006-3273-9063}. Contact: \protect\href{rajmadan.lakshmanan@math.tu-chemnitz.de}{rajmadan.lakshmanan@math.tu-chemnitz.de}}
	\and
	Alois Pichler%
		\footnotemark[1]\,
		\thanks{\href{https://orcid.org/0000-0001-8876-2429}{\orcidlink{0000-0001-8876-2429} https://orcid.org/0000-0001-8876-2429}; DFG, German Research Foundation – Project-ID 416228727 – SFB 1410.		%		\href{alois.pichler@math.tu-chemnitz.de}{alois.pichler@math.tu-chemnitz.de},
		}
}

\begin{document}
\maketitle

\begin{abstract}
This contribution presents substantial computational advancements to compare measures even with varying masses.
Specifically, we utilize the nonequispaced fast Fourier transform to accelerate the radial kernel convolution in unbalanced optimal transport approximation, built upon the Sinkhorn algorithm. We also present accelerated schemes for maximum mean discrepancies involving kernels.
Our approaches reduce the arithmetic operations needed to compute distances from $𝓞(n²)$ to $𝓞(n \log n)$, opening the door to handle large and high-dimensional datasets efficiently. 
\rev{Furthermore, we establish robust connections between transportation problems, encompassing Wasserstein distance and unbalanced optimal transport, and maximum mean discrepancies.
This empowers practitioners with compelling rationale to opt for adaptable distances.}
	\medskip

	\noindent \textbf{Mathematics Subject Classifications:} 90C08, 90C15, 60G07

	\noindent \textbf{Keywords:} Sinkhorn divergence~$⋅$ unbalanced optimal transport~$⋅$ non-equispaced fast Fourier transform~$⋅$ entropy
\end{abstract}

%	╭───────────────────────────────────────────────
%	│		Introduction
\section{Introduction}\label{sec:Intro}
Many progressive and remarkable formulations have been presented to address the problem of comparing probability measures.
Among popular domains like data science, the restriction to formulations that can only handle measures of equal weight (probability measures, for example), is cumbersome.
Some formulations have been proposed to handle unbalanced measures. %general
However, the numerical acceleration of such formulations is not addressed enough in the literature.
This paper focuses on numerical acceleration of prominent formulations, which enable the comparison of two measures with possibly different masses.
Additionally, we provide theoretical bounds and elucidate relationships within the taxonomy of distances presented below.
%Along with the fast computations these bounds helps the practitioners to choose the flexible quantities to their interest.
\rev{Leveraging fast computations alongside these bounds empowers practitioners to select flexible distances tailored to their specific requirements.}

%Optimal transport (OT) is one well-known
Optimal transport (OT) in its standard formulation builds on efficient ways to rearrange masses between two given probability distributions. %These approaches are established on the Kantorovich formulation~(cf.\ \citet{kantorovich1942transfer}), which can be tackled using linear programming methods for \emph{discrete} measures.
Such approaches commonly relate to the Wasserstein or Monge–Kantorovich distance.
We refer to the monograph \citet{Villani2003} for an extensive discussion of the OT problem.
A pivotal prerequisite of the standard OT problem formulation is that it requires the input measures to be normalized to unit mass --~that is, to probability, or balanced measures.
This is an unfeasible presumption for some problems that need to handle arbitrary, though positive measures.

\emph{Unbalanced} optimal transport (UOT) has been established to deal with this drawback by allowing mass variation in the transportation problem (cf.\ \citet{benamou_2003}).
This problem is stated as a generalization of the Kantorovich formulation (cf.\ \citet{kantorovich1942transfer}) by taking aberrations from the desired measures into account in addition.

On the other hand, kernel techniques lead to many mathematical and computational advancements.
One popular instance of such advancements is the reproducing kernel Hilbert space (RKHS), which is predominantly considered as an efficient tool to generalize the linear statistical approaches to non-linear settings, cf.\ the \emph{kernel trick}, e.g.
By utilizing the distinct mathematical properties of RKHS, a distance measure is proposed in \citet{gretton2012kernel}, which is known as maximum mean discrepancy (MMD).
Maximum mean discrepancies (MMDs) are \emph{kernel} based distance measures between given \emph{unbalanced} and/\,or probability measures, based on embedding measures in a RKHS.
We refer to \citet{muandet2017kernel} for a textbook reference.
Remarkably, these kernel based distances provide meaningful metrics for \emph{unbalanced} measures as well.

\paragraph{Numerical computation and applications of UOT and MMD.}
Traditional choices for solving the transportation problem involve several discrete combinatorial algorithms that rely on the finite-dimensional linear programming formulation. Notable among these are the Hungarian method, the auction algorithm, and the network simplex, see  \citet{luenberger2016linear}, \citet{Ahuja1993}.
However, their scalability diminishes notably when confronted with large, dense problems. 
The technique of entropy regularization to solve the standard OT problem is an important milestone, which improves the scalability of traditional methods (cf.\ \citet{cuturi2013sinkhorn}).
The Sinkhorn algorithm exploits the entropy-regularized (OT) problem, recognized as an alternating optimization algorithm, commonly referred to as the \emph{Iterative Proportional Fitting Procedure} (IPFP) (cf.\ \citet{Sinkhorn1967a}).
%The entropy regularized OT problem utilizes the Sinkhorn algorithm~(an alternating optimisation algorithm), which is known as Iterative Proportional Fitting Procedure      (cf.\ \citet{Sinkhorn1967a}).
The unbalanced OT (UOT) problems splendidly adopt the entropy regularization technique, which also improves the scalability.

%The progressive success of entropy regularized OT motivates researchers in applicable fields, which include text classification in natural language processing (cf.\ \citet{kusner2015word}), image classification in computer vision (cf.\ \citet{pmlr-v139-tai21a}), and domain adaptation (cf.\ \citet{courty2014domain}), etc.
Today’s data-driven world, which is dominated by rapidly growing machine learning (ML) techniques, utilizes the entropy regularized UOT algorithm for many applications.
These include, among various others, domain adaptation (cf.\ \citet{pmlr-v139-fatras21a}), crowd counting (cf.\ \citet{ma2021learning}), bioinformatics (cf.\ \citet{schiebinger2019optimal}), and natural language processing (cf.\ \citet{pmlr-v129-wang20c}).
%justify the increasing need to compare arbitrary positive measures

MMD is considered to be an important framework for many problems in machine learning and statistics.
Intelligent fault diagnosis (cf.\ \citet{mmdML}), two-sample testing (cf.\ \citet{gretton2012kernel}), feature selection (cf.\ \citet{song2012feature}), density estimation (cf.\ \citet{song2008tailoring}), and kernel Bayes’ rule (cf.\ \citet{fukumizu2013kernel}) are applicable fields based on MMD.
In the MMD framework, the choice of the kernel plays a vital role, and it depends on the nature of the problem.
The forthcoming sections explicitly explain the efficient computational approach for prominent kernels.

%which is also known as kernel norms
\paragraph{Related works.}
%The Kullback-Leibler divergence is a popular divergence for probability measures (cf.\ \citet{liero2018optimal}).
Considering unbalanced measures (that is, measures of possibly different total mass) requires extending divergences to more general measures than probability measures.
We build our formulations on Bregman divergences.
This approach the popular Kullback–Leibler divergence, which captures deviations for probability measures only.
%Moreover, we provide some robust inequalities which explains the robust relation between Wasserstein distance and MMD.
%For an extensive discussion of the UOT problems from mathematical perspective we may refer to \citet{kondratyev2016new}.
%, and in the forthcoming section we provide necessary explanation of UOT problem.

%There has been a substantial amount of proposals to develop standard OT methods to arbitrary positive measures, which are sometimes called unbalanced measures (cf.\ \citet{lombardi2015eulerian}, \citet{kondratyev2016new}, \citet{ghadle2016optimal}).
Many algorithms have been proposed, including the entropy regularization approach, to efficiently solve UOT problems.
\citet{chizat2018scaling} investigate some prominent numerical approaches. In the research work of \citet{carlier2017convergence}, a method for fast Sinkhorn iterations, that involve only Gaussian convolutions, is theoretically studied.
The core idea of this approach is that each step (i.e., each iteration) can be solved on a regular grid (equispaced), which is relatively faster than the standard Sinkhorn iteration. %But, this approach renders an expense of approximation, which makes it inappropriate to capture the precise nature of solutions.
However, this approach utilizes the \emph{equispaced} convolution, which is often a setback among the wide range of applications (cf.\ \citet{platte2011impossibility}, \citet[Section~1]{potts2001fast}), with the expense of the approximation. %Thus, it is an inappropriate approach to capture the precise nature of solutions.
Furthermore, specific results scrutinize the consideration of low-rank approximation methods, specifically Nyström methods, for enhancing scalability (cf.\ \citet{NEURIPS2019_f55cadb9}).
Moreover, \citet{von2023generalized} emphasize the necessity for addressing the heightened computational complexity and memory requirements of regularized UOT problems.% need to be addressed further.

Despite powerful statistical properties, the MMD frameworks suffer from computational burdens.
Unlike OT and UOT problems, only few contributions are presented to surpass the computational burden of MMD problems.
Those few contributions improve the computational process at the price of substandard approximation accuracy (cf.\ \citet{zhao2015fastmmd}, \citet{le2013fastfood}).
Furthermore, some recent approaches, with focus on Gaussian kernel implementation, utilize the low-rank Nyström method to mitigate the computational burdens (cf.\ \citet{cherfaoui2022discrete}).
%Nevertheless, for faster computation, graphical processing unit~(GPU) acceleration is a considerable technique as well, although it scales only with the number of processors.

One of the top ten algorithms of the 20\textsuperscript{th} century is the fast Fourier transform (FFT), which relies on equispaced data points.
The technique of FFT has been generalized to access \emph{non}-equispaced data points, which is known as non-equispaced fast Fourier transform (\emph{N}FFT).
In contrast to FFT, NFFT is an approximate algorithm, although it renders stable computation at the same number of arithmetic operations as FFT.
The fast summation method employed in the forthcoming algorithms is harnessed in the standard entropy-regularized OT problem, see \citet{lakshmanan2023nonequispaced}.
Remarkably, it captures the precise nature of standard numerical algorithms and comes with significant improvements in terms of time and memory.
This method can also be utilized for multi-marginal OT, a generalization of standard OT (cf.\ \citet{Ba2022}).
\rev{Notably, the NFFT fast summation method has not been explored in either a UOT or an MMD setup.
A evident deviation regarding the utilization of NFFT approximation from existing literature lies in our implementation of the three-dimensional approximation regime of NFFT. %This domain epitomizes the maximum capacity of NFFT approximation, wherein grid expansion escalates exponentially.
Furthermore, the investigation into one, two, and three-dimensional approximations of MMD, notably focusing on the inverse multi-quadratic and energy kernels, is conspicuously absent in current literature.}
%signifying a distinctive and non-trivial extension of its applicability.

\paragraph{Contributions \rev{and outline}.}To address the formidable challenges set by UOT and MMDs, we introduce efficient methods and \rev{robust bounds.}
\begin{revise}
More precisely, the paper discusses the following key approaches to address unbalanced problems:
\begin{enumerate}
	\item We establish bounds of the UOT problem using the Wasserstein distance, suitable exclusively for probability measures, and the Bregman divergences, which are applicable for unbalanced measures.
	\item The relationship between MMD and the Wasserstein distance, as well as the optimal transport for unbalanced measures, recognizing that the genuine Wasserstein distance and UOT are often unattainable.
	\item NFFT-based fast computation methods for regularized UOT and MMD.
\end{enumerate}
\end{revise}

Initially, we conduct a comprehensive theoretical exploration, unveiling metric inequalities and other profound relationships between \rev{the Wasserstein distance and UOT}, and MMDs.
This paper addresses computational hurdles, but also deepens the understanding of the intricate connections within these frameworks.
\medskip

%We enumerate the detailed contributions and outline of the paper below.
Below, we provide an outline of the paper.
\begin{enumerate}
	‣	Sections~\ref{sec:upperbound} and~\ref{sec:EntUOTConti} present upper bounds for both, the standard UOT problem and the regularized UOT problem. These bounds do not necessitate any sophisticated optimization routine for their computation.
	\rev{Moreover, Section~\ref{sec:upperbound} demonstrates a bound for UOT by leveraging the Wasserstein distance.} 
	%‣	In the generic computational setup of regularized UOT, algorithm initialization becomes crucial, with common choices being the zero vector or the one vector. However, within the optimization realm, the selection of an initialization strategy significantly impacts the convergence behavior and efficiency of optimization algorithms. An optimal initialization not only accelerates convergence but also enhances the overall stability of the optimization process. Our upper bounds play a pivotal role in shaping the initialization strategy.
	‣	In Section~\ref{sec:Relation}, we furnish robust inequalities that articulate the relationship between MMD and the Wasserstein distance.
	\rev{Additionally, we expand the robust inequalities to the unbalanced setting, specifically examining the connection between MMD and UOT.}
	‣	 Section~\ref{sec:Fast_summation} introduces the fast summation technique based on the NFFT, enabling fast matrix-vector operations.
	This section unveils the NFFT-accelerated implementation of regularized UOT and MMDs, ensuring fast and stable computations.
	For fast MMD approaches, we present inverse multiquadratic and energy kernels in addition to Gaussian and Laplace kernels, as there is a demand for these kernels in the literature, see \citet{hagemann2023posterior}, \citet{energyAppl}, \citet{EnergyApp}.
	Furthermore, we elucidate the arithmetic operations associated with these problems.
	\rev{By leveraging our fast summation method and the bounds delineated in Section~\ref{sec:upperbound},~\ref{sec:EntUOTConti} and ~\ref{sec:Relation}, we adeptly acquire the desired quantities with efficiency and precision.}
	% ‣	The primary focus of this work is comparing measures with varying masses, while the NFFT fast summation approach is built upon the kernel approximation realm for specific boundaries, which might be restrictive to our problem interest.
	% A heuristic stabilization techniques employed to stabilize our proposals.
	% Notably, it surpasses the restriction without the loss of generality.
	% Section~\ref{sec:stability} explores the adaptability of the $λ$-scaling technique for the NFFT accelerated regularized UOT setup. Furthermore, it elucidates the truncation of the kernel approach to facilitate parameter tuning.
	‣	Section~\ref{Sec:Numerical_exposition} substantiates the performances of our proposed approaches using both synthetic and real datasets.
	\rev{Furthermore, we provide an analysis of the disparities among our established bounds.} Additionally, we offer comprehensive discussions on the advantages and trade-offs associated with prominent existing approaches.
\end{enumerate}

\section{Preliminaries and taxonomy of Wasserstein distances} \label{sec:Preliminaries}
In this section, we provide necessary definitions and properties to facilitate the upcoming discussion.
Moreover, we introduce robust relations between MMD and Wasserstein distance, and upper bounds for the UOT and regularized UOT.

\subsection{Bregman divergence}
In various domains, the Bregman divergence (cf.\ \Citet{bregman1967relaxation}) is utilized as a generalized measure of difference of two different points (cf.\ \citet{lu2022neural, nielsen2022statistical}).
Here, we consider the deviation with regard to a strictly convex function on the set of non-negative Radon measures $𝓜 ₊(𝓧)$, where $𝓧 ⊂ ℝᵈ$. %$\mathcal M₊(\mathcal X) $ is , where $\mathcal X$ is a closed subset of $ℝᵈ$.
%, and one of the primitive instances is the squared Euclidean distance.
%Mathematically rigorous definition and properties of Bregman divergence as follows.
\begin{definition}[Bregman divergence]\label{def:226}
	For a $ℝ$-valued, convex function $Φ∶ 𝓜 ₊(𝓧)→ ℝ$, the Bregman divergence is
	\[	Ⅾ(ν‖μ)≔ Φ(ν)- F_μ(ν)- Φ(μ),\]
	where
	\[	F_μ(\nu)≔ \lim_{h↓0} \frac1h❨\Phi❪h \nu+ (1-h)μ❫- Φ(μ)❩ \]
	is the directional derivative of the convex function~$Φ$ at~$μ$ in direction $ν-μ$.
\end{definition}
In statistics, $F_μ$ is also called \emph{von Mises derivative} or the \emph{influence function} of $Φ$ at $μ$.
Note that the Bregman divergence exists (possibly with values $±∞$), and it is \emph{non-negative} (that is, $Ⅾ(ν‖μ)≥0$) \rev{even for unbalanced measures $\mu$ and $\nu$,} as the function $Φ$ is convex by assumption and $Φ(ν)≥ Φ(μ)+ F_μ(ν)$. %(cf.\ the illustration in Figure~\ref{fig:Bregman}).
Uniform convexity causes \emph{definiteness} (that is, $Ⅾ(ν‖μ)=0$ if and only if $ν=μ$) and the Bregman divergence, in general, is not symmetric.
%\begin{figure}[ht]
%	\centering
%	\includegraphics[width=5.0cm, height=4.5cm]{BregmanDivergence.pdf}
	%\tikz[x=1.1cm,y=0.1cm]{\draw[->](0,3)--(11.5,0);\node[text width=2cm] at (0,0.01) {$0$};\node[text width=2cm] at (13.0,0.01) {$\infty$};}%;\node[text width=2cm] at (6.5,0.3) {$\eta$};}
%	\caption{Bregman divergence between the measures~$μ$ and~$ν$}\label{fig:Bregman}
%\end{figure}

The Bregman divergence is notably defined here for general measures $μ$ and $ν$ in the domain of $Φ$, including unbalanced measures (i.e., $μ(𝓧)≠ ν(𝓧)$); the Bregman divergence is particularly not restricted to probability measures.

Important examples for the Bregman divergence involve a non-negative reference measure~$μ$ and a convex function~$φ$, and build on
\[Φ(ν)≔ ∫_𝓧 φ❨Z_ν(ξ)❩ μ(ⅾξ),\]
%where~$μ$ is a reference measure and~$φ$ convex. %For instance, consider a positive reference measure~$μ$ and define the map where $φ$ is convex and
where $Z_ν$ is the Radon–Nikodým derivative of $ν$ with respect to $μ$; that is, $ν(ⅾξ)= Z_ν(ξ) μ(ⅾξ)$.
%Note, that \[	Φ(ν)= ∫_𝓧 Z(ξ)\log Z(ξ) μ(ⅾξ)= ∫_𝓧 φ❪Z(ξ)❫ μ(ⅾξ),\]
%where $φ(z)≔ z\log z$.
As $φ$ is convex and the measure $μ$ positive, it follows from
\begin{align}
	Φ❨h ν₁+ (1-h)ν₀❩&= ∫_𝓧 φ⟮h {ⅾν₁∕ⅾμ}(ξ)+(1-h){ⅾν₀∕ⅾμ}(ξ)⟯ μ(ⅾξ) ⏎
		&≤ h∫_𝓧 φ⟮{ⅾν₁∕ⅾμ}(ξ)⟯μ(ⅾξ) + (1-h)∫_𝓧 φ ⟮{ⅾν₀∕ⅾμ}(ξ)⟯ μ(ⅾξ) ⏎
		&= h Φ(ν₁)+(1-h) Φ(ν₀)
\end{align}
that $Φ$ is convex as well.
For sufficiently smooth functions~$φ$, it follows with the Taylor series expansion $φ(1+z)= φ(1)+ φ′(1) z+𝓞(z²)$ that
\begin{align}
	F_μ(ν)&= \lim_{h↓0}{1∕h}❪∫_𝓧 φ⟮h⋅{ⅾν∕ⅾμ}(ξ)+1-h⟯ μ(ⅾξ)- ∫_𝓧 φ(1) μ(ⅾξ)❫  ⏎
		&= φ′(1)⋅ ∫_𝓧 ⟮{ⅾν∕ⅾμ}(ξ)-1⟯ μ(ⅾξ)⏎
		&= φ′(1) ν(𝓧)- φ′(1) μ(𝓧),
\end{align}
so that the Bregman divergence associated with~$Φ$ is
\begin{align}\label{eq:298}
	Ⅾ(ν‖μ)&= ∫_𝓧 φ⟮{ⅾν∕ⅾμ}(ξ)⟯ μ(ⅾξ)+ φ̕(1) ❨μ(𝓧)- ν(𝓧)❩ - φ(1) μ(𝓧).
\end{align}
Using the aforementioned universalized formulation of Bregman divergence, the Bregman divergence corresponding to $φ(z)= z\log z$ is
\begin{align}\label{eq:302}
	\KL(ν‖μ)&= ∫_𝓧 \log⟮{ⅾν∕ⅾμ}⟯\,ⅾν+ μ(𝓧)- ν(𝓧),
\end{align}
which generalizes the Kullback–Leibler divergence to general (unbalanced) measures~$ν$, provided that~$μ$ is a positive measure.

\subsection{OT problem}
The standard (i.e., balanced) OT problem is a linear optimization problem that reveals the minimal cost to transport masses from one probability measure to some other probability measure.
The optimal cost is popularly known as the Wasserstein distance.
%The standard OT problem is also known as Wasserstein distance.
\begin{definition}[$r$-Wasserstein distance]\label{def:Wasserstein}
	The Wasserstein distance of order $r≥1$ of the probability measures~$P$, $Ꝓ∈\mathcal P(\mathcal X)$ for a given cost or distance function $d∶𝓧× 𝓧→ [0,∞]$ is
	\begin{subequations}\label{eq:Wasserstein}
		\begin{align}\label{eq:W0}
			\rev{W_r(P,\tilde P)}& \coloneqq w_r(P,\tilde P)^{\nicefrac1r}, 	⏎
			\shortintertext{where}	
			w_r(P,\tilde P) & \coloneqq \inf_{\pi \in \mathcal P(\mathcal X × \mathcal X)}				\iint_{\mathcal X\times{\mathcal X}} d(x, 𝑥)ʳ  π(\mathrm d x,\mathrm d\tilde{x}).
		\end{align}
	Here, $π$ has marginal measures $P$ and $Ꝓ$, that is,
		\begin{align}
			τ_{1\#}π& = P	\text{ and} \label{eq:const_W} \\
			τ_{2\#}π& = Ꝓ,	\label{eq:const_Wb}
		\end{align}
	\end{subequations}
	where $τ₁(x,\tilde x)≔ x$ ($τ₂(x,\tilde x)≔ \tilde x$, resp.)\@ is the coordinate projection and $τ_{1\#}π≔ π∘τ₁^{-1}$ ($τ_{2\#}π≔ π∘τ₂^{-1}$, resp.)\@ the pushforward measure.
		%	\begin{align}\label{eq:const_W}
%		\Pi(P,\tilde{P}) = \big\{ \pi \in \mathcal P(\mathcal X \times \mathcal X)\big| \tau_{1\#} \pi = P, \tau_{2\#}\pi = \tilde P\big\};
%	\end{align} %and $\tau_{1\#}\pi$.
	%is the set of bivariate probability measures on $\mathcal X\times{\mathcal X}$ with marginals~$P$ and~$\tilde{P}$, respectively;
	%that is, $\pi (A\times \mathcal X)=P(A)$ and $\pi (\mathcal X\times B)=\tilde P(B)$ for all measureables sets $A$ and $B\subset\mathcal X$.
	%recall, that the pushforward $τ_{1\#}π$ for the projection $τ₁(x,\tilde x)\coloneqq x$ is $τ₁(A)=π(A×𝓧)$, the marginal distribution.
\end{definition}
More explicitly, the marginal constraints~\eqref{eq:const_W}--\eqref{eq:const_Wb} are
\begin{align}
	π(A×𝓧)& = P(A) \text{ and}⏎
	π(𝓧×B)& = Ꝓ(B),
	\end{align}
where $A$ and $B⊂ 𝓧$ are  measurable sets.
We shall also write
\[	⟨π| d^r⟩≔ ∬_{𝓧×𝓧} d(ξ,η)ʳ π(ⅾξ,ⅾη) \]
when averaging the function $dʳ$ with respect to the measure $π$ as in~\eqref{eq:W0}.
% \begin{remark}[balanced]
% 	The Wasserstein distance is defined for probability measures. However, the standard OT can accommodate non-negative balanced measures. We define standard OT for balanced measures 
% 	\begin{align}
% 		\ot_{r}(P,\tilde P)  \coloneqq \inf_{\pi \in \mathcal {𝓜}_{+}(\mathcal X × \mathcal X)} \iint_{\mathcal X\times{\mathcal X}} d(x, 𝑥)ʳ  π(\mathrm d x,\mathrm d\tilde{x}),
% 	\end{align}
% 	where $π$ has balanced marginal measure $μ$ and $ν$, i.e., $μ(𝓧)= ν(𝓧)$.
% \end{remark}
% \begin{theorem}[{cf.\ \citet[Theorem~4.1]{Villani2009}}]
% 	Let $d \colon\mathcal X\times \mathcal X\to[0,\infty]$ be a lower semicontinuous cost or distance function, then the minimizers of~\eqref{eq:W0} exist.
% \end{theorem}
%\begin{remark}[Distance versus cost functions]
%	The definition of the Wasserstein distance presented above starts with a distance $d$ on $\mathcal X$. We mention that the literature periodically develops the theory for cost functions $C\colon\mathcal X\times\mathcal X\to\mathbb R $ instead of the distance $d$. The results presented below extend to cost functions in place of the distance on the underlying space.
%\end{remark}

\subsection{UOT problem}
%As we discussed in Section~\ref{sec:Intro}, several formulations have been proposed to extend standard OT to handle mass variation.
The marginal measures in~\eqref{eq:const_W} and~\eqref{eq:const_Wb} necessarily share the same mass, as $τ_{1\#} π(𝓧)= π(𝓧×𝓧)= τ_{2\#} π(𝓧)$.
The core principle of the UOT problem is to relax the hard marginal constraints~\eqref{eq:const_W} and~\eqref{eq:const_Wb} with soft constraints to enable the mass variation.
More concisely, the soft constraints are considered in the objective by involving the divergence between the marginals of~$π$ from given measures~$μ$ and~$ν$. %and the measures
%$p$ and $\tilde{p}$

In our research, we consider Bregman divergences as the soft constraints or \emph{marginal discrepancy} function.
%induced marginal discrepancy
\begin{definition}[Unbalanced optimal transport problem]\label{def:UOT_continuous}
	Let $μ$, $ν∈ 𝓜 ₊(𝓧)$ and $d∶𝓧×𝓧→ [0,∞)$. Let $Ⅾ(⋅ ‖ ⋅)$ be the Bregman divergence as in Definition~\ref{def:226}.
	The generalized unbalanced transport cost is
	\begin{equation}\label{eq:W3}
		\uot_{r;η}(μ,ν) ≔	\inf_{π∈ 𝓜 ₊(𝓧 × 𝓧)} ❬π| dʳ❭ %∬_{𝓧×𝓧} d(x,𝙭)ʳ π(𝖽x,𝖽𝙭)
			+ η₁ Ⅾ(τ_{1\#}π‖ μ)+ η₂ Ⅾ(τ_{2\#}π‖ ν),
	\end{equation}
	where $r≥ 1$; the parameter $η₁≥0$ ($η₂≥0$, resp.)\@ is a regularization parameter, which emphasizes the importance assigned to the marginal measures~$μ$ ($ν$, resp.).
\end{definition}
%We emphasize that the aforementioned formulation of UOT is defined using Bregman divergence~$𝖣$.
The measure $π∈𝓜 ₊$ in~\eqref{eq:W3} satisfies the relation
$τ_{1\#}π(𝓧)= τ_{2\#}π(𝓧)$, although $μ(𝓧) ≠ ν(𝓧)$ in general.
The optimal measure in~\eqref{eq:W3} thus constitutes a compromise between the total masses of~$μ$ and~$ν$.
Note as well that the measure $μ⊗ ν$ is feasible in~\eqref{eq:W3}, with total measure $(μ⊗ ν)(𝓧×𝓧)=μ(𝓧)⋅ν(𝓧)$.

\begin{remark}[Marginal regularization parameters]\label{rem:813}
	For parameters $η₁= η₂= 0$ (or $η₁= η₂↘ 0$, resp.), the explicit solution of problem~\eqref{eq:W3} is $π=0$ ($π↘0$, resp.).
	That is, no transportation takes place at all in this special case.

	For $η₁↗ ∞$ and $η₂↗ ∞$, problem~\eqref{eq:W3} appraises the marginals well.
	More precisely, for measures with equal mass, the solution~$π$ tends to the solution of the standard optimal transport problem.
	In addition, the total mass of the optimal solution~\eqref{eq:W3} increases from $π(𝓧×𝓧)=0$ to $π(𝓧×𝓧)= μ(𝓧)= ν(𝓧)$ (not $μ(𝓧)⋅ν(𝓧)$).
	For measures with unequal mass, $μ(𝓧)≠ ν(𝓧)$, the transportation plan is the best plausible arrangement of the measures~$μ$ and $ν$.
	%\rev{More precisely, when $η₁↗ ∞$ and $η₂↗ ∞$, the total mass of the optimal solution~\eqref{eq:W3} increases from $π(𝓧×𝓧)=0$ to $π(𝓧×𝓧)≤M,$ where $M ≔ \max(μ(𝓧),ν(𝓧))$}.
	%For measures with equal mass, $μ(𝓧)= ν(𝓧)$, and $η₁↗ ∞$ and $η₂↗ ∞$, problem~\eqref{eq:W3} converges to standard OT problem or Wasserstein problem (probability measures).
\end{remark}
For a discussion on the minimizing measure and its existence we may refer to \citet[Theorem~3.3]{liero2018optimal}.

\subsection{Upper bounds for the unbalance optimal transport \label{sec:upperbound}}
Suppose the marginals $τ_{1\#}π$ and $τ_{2\#}π$ were known, then the Wasserstein problem in~\eqref{eq:W3} is intrinsic.
For this reason, we may consider the Wasserstein problem with normalized marginals to develop an upper bound for the unbalanced optimal transport problem.
\begin{proposition}\label{prop:464}
	Let $π$ be a bivariate probability measure, feasible for the Wasserstein problem with normalized marginals
	\begin{equation}\label{eq:466}
		P(⋅)≔ {μ(⋅) ∕ μ(𝓧)}  \text{ and }  Ꝓ(⋅)≔ {ν(⋅) ∕ ν(𝓧)}.
	\end{equation}
	Then
	\begin{equation}\label{eq:478}
		c^*_{r;η}=e^{-{❬ π| dʳ❭ ∕ η₁+η₂}}⋅ μ(𝓧)^{η₁ ∕ η₁+η₂}⋅ν(𝓧)^{η₂ ∕ η₁+η₂}
	\end{equation}
	is an upper bound for the unbalanced optimal transport problem~\eqref{eq:W3} with Kullback–Leibler divergences,  and the constant in~\eqref{eq:478} is optimal among all measures of the form $c⋅π$, where $c≥0$.

	% Specifically, the scaled \rev{or normalized} product measure
	% \[ e^{-{❬ μ⊗ ν| d^r❭ ∕ (η₁+η₂) μ(𝓧)ν(𝓧)}} \cdot {μ⊗ ν ∕ μ(𝓧)^{η₂ ∕ η₁+η₂}⋅ν(𝓧)^{η₁ ∕ η₁+η₂}}  \]
	% is an upper bound for~\eqref{eq:W3}.
\end{proposition}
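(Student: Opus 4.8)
The plan is to produce the asserted bound by the direct method of evaluating the UOT objective in~\eqref{eq:W3} at a cleverly chosen feasible measure and minimizing over its free scalar. Since $\pi$ is a fixed probability measure on $\mathcal{X}\times\mathcal{X}$ with marginals $P$ and $\tilde P$ from~\eqref{eq:466}, every measure $c\,\pi$ with $c\ge 0$ is feasible for~\eqref{eq:W3}, so the value of the objective at any such $c\,\pi$ is an upper bound for the infimum. Throughout I specialize the Bregman divergence $D$ to the generalized Kullback--Leibler divergence~\eqref{eq:302}, since it is precisely $\varphi(z)=z\log z$ which produces the exponential and power structure appearing in~\eqref{eq:478}. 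Writing $g(c)$ for the objective evaluated at $c\,\pi$, the task reduces to minimizing the scalar function $g$ over $c\ge 0$.

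The key computation is to make $g(c)$ explicit. By linearity of the coupling term, $\langle c\,\pi| d^r\rangle = c\,\langle \pi| d^r\rangle$. For the marginal penalties I would use that $\tau_{1\#}(c\,\pi)=c\,P$ and $\tau_{2\#}(c\,\pi)=c\,\tilde P$, and that $P=\mu/\mu(\mathcal{X})$ has the \emph{constant} Radon--Nikod\'ym derivative $\mathrm{d}P/\mathrm{d}\mu = 1/\mu(\mathcal{X})$. Consequently $\mathrm{d}(c\,P)/\mathrm{d}\mu = c/\mu(\mathcal{X})$ is constant, so the integral in~\eqref{eq:302} collapses and, using $(c\,P)(\mathcal{X})=c$,
\[
 \KL(c\,P\| \mu)= c\,\log\!\frac{c}{\mu(\mathcal{X})} + \mu(\mathcal{X}) - c,
\]
with the symmetric expression for $\KL(c\,\tilde P\| \nu)$. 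Collecting terms gives $g(c)= c\,\langle \pi| d^r\rangle + \eta_1\bigl(c\log\frac{c}{\mu(\mathcal{X})}+\mu(\mathcal{X})-c\bigr) + \eta_2\bigl(c\log\frac{c}{\nu(\mathcal{X})}+\nu(\mathcal{X})-c\bigr)$.

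It then remains to minimize. The function $g$ is convex on $(0,\infty)$, since $g''(c)=(\eta_1+\eta_2)/c>0$, so the unique stationary point is the global minimizer. Solving $g'(c)=\langle \pi| d^r\rangle + \eta_1\log\frac{c}{\mu(\mathcal{X})}+\eta_2\log\frac{c}{\nu(\mathcal{X})}=0$ yields $(\eta_1+\eta_2)\log c = -\langle \pi| d^r\rangle + \eta_1\log\mu(\mathcal{X}) + \eta_2\log\nu(\mathcal{X})$, that is
\[
 c^\ast = e^{-\langle \pi| d^r\rangle/(\eta_1+\eta_2)}\,\mu(\mathcal{X})^{\eta_1/(\eta_1+\eta_2)}\,\nu(\mathcal{X})^{\eta_2/(\eta_1+\eta_2)},
\]
which is exactly the scalar in~\eqref{eq:478}. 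This simultaneously shows that $c^\ast\pi$ furnishes an upper bound and that $c^\ast$ is optimal among all $c\,\pi$.

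For the concluding product-measure statement I would simply take $\pi = (\mu\otimes\nu)/\bigl(\mu(\mathcal{X})\nu(\mathcal{X})\bigr)$, which indeed has marginals $P$ and $\tilde P$ and is therefore admissible. Substituting $\langle \pi| d^r\rangle = \langle \mu\otimes\nu| d^r\rangle/\bigl(\mu(\mathcal{X})\nu(\mathcal{X})\bigr)$ into $c^\ast\pi$ and simplifying the exponents via $\eta_1/(\eta_1+\eta_2)-1=-\eta_2/(\eta_1+\eta_2)$ recovers the displayed bound. I do not expect a genuine obstacle here: the only points demanding care are the two mass-correction terms $\mu(\mathcal{X})-c$ and $\nu(\mathcal{X})-c$ arising from the \emph{unbalanced} form~\eqref{eq:302} of the divergence (a normalized KL would omit them and alter $c^\ast$), and the verification that the stationary point is a minimum rather than merely critical, which convexity settles.
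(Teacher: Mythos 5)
Your proposal is correct and follows essentially the same route as the paper: restrict the objective to the ray $c\,\pi$, exploit the constant Radon--Nikod\'ym derivatives to evaluate the generalized Kullback--Leibler penalties as $c\log\bigl(c/\mu(\mathcal{X})\bigr)+\mu(\mathcal{X})-c$ (and symmetrically for $\nu$), minimize the resulting scalar function over $c>0$, and then instantiate $\pi=(\mu\otimes\nu)/\bigl(\mu(\mathcal{X})\nu(\mathcal{X})\bigr)$ for the product-measure bound. Your explicit convexity check $g''(c)=(\eta_1+\eta_2)/c>0$ is a small addition the paper leaves implicit, but the argument is the same.
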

\begin{proof}
	See Appendix~\ref{prop:464Proof}
\end{proof}

\begin{revise}
The following proposition delineates a bound for UOT by leveraging the Wasserstein distance and Bregman divergences. 
	
\begin{proposition}\label{prop:547}
	Let $μ$, $ν∈ 𝓜 ₊(𝓧)$, and define probability measures $P(⋅)≔ {μ(⋅) ∕ μ(𝓧)}$ and $Q(⋅)≔ {ν(⋅) ∕ ν(𝓧)}$.
	It holds that
	\[\uot_{r;η}(μ,ν) \leq u⋅ w_{r}(P,Q) + η_1 Ⅾ(P‖ μ)+ η_2 Ⅾ(Q‖ ν),  \]where  $u\coloneqq μ(𝓧)⋅ν(𝓧)$ and $Ⅾ(⋅‖⋅)$ is the Bregman divergence.

	For probability measures $P$ and $Q$ on a discrete space $\mathcal X$, it holds in addition that
	\[w_{r}(P,Q) \cdot\uot_{r;η}(P,Q)≤ \|d^r\|_F^2 ,\]
	where $\|d^r\|_F$ is the Frobenius norm of the distance matrix. 
\end{proposition}
\begin{proof} For probability measures $P$ and $Q$ as defined above, we have that
	\begin{align}\uot_{r;η}(μ,ν) &=  \inf_{{π}∈  𝓜 ₊(𝓧 × 𝓧)} ∬_{𝓧×𝓧} d(x,y)^{r}  π(ⅾx,ⅾy)  + η_1 Ⅾ(τ_{1\#}π‖ μ)+  η_2 Ⅾ(τ_{2\#}π‖ ν)	⏎ 
						&≤  \inf_{{π}∈  𝓜 ₊(𝓧 × 𝓧)}u⋅ ⟮∬_{𝓧×𝓧} d(x,y)^{r} {{π}(ⅾx,ⅾy) ⁄ u}+ \tilde{η}_1 Ⅾ⟮{τ_{1\#}{π} ⁄ u}‖ P⟯+  \tilde{η}_2 Ⅾ⟮{τ_{2\#}{π} ⁄ u}‖ Q⟯ ⟯		
		⏎ &\qquad \qquad\qquad\qquad \qquad + η_1 Ⅾ(τ_{1\#}π‖ μ)+ η_2 Ⅾ(τ_{2\#}π‖ ν)  \label{eq:766}\end{align}
	as the terms $u⋅\tilde{η}_1 Ⅾ({τ_{1\#}{π} ⁄ u}‖ P)$ and  $u⋅\tilde{η}_2 Ⅾ({τ_{2\#}{π} ⁄ u}‖ Q)$ are non-negative for $\tilde\eta_1\ge 0$ and $\tilde\eta_2\ge0$.
	Employing the optimal measure~$π^{*}_{\tilde{η}}$ of the UOT problem associated with the probability measures $P$ and $Q$, it follows from right-hand side of~\eqref{eq:766} that
		\begin{align} \uot_{r;η}(μ,ν)  ≤ u⋅ \uot_{r;\tilde{η}}(P,Q) + η_1 Ⅾ(τ_{1\#}π^{*}_{\tilde{η}}‖ μ)+ η_2 Ⅾ(τ_{2\#}π^{*}_{\tilde{η}}‖ ν). \label{eq:770}\end{align}
	%  where $τ_{1\#}π^{*}_{\tilde{η}₁;\tilde{η}₂}$ and $τ_{2\#}π^{*}_{\tilde{η}₁;\tilde{η}₂}$ are marginals of optimal measure $π^{*}_{\tilde{η}₁;\tilde{η}₂}$ of the UOT problem for probability measures $P$ and $Q$ from right-hand side of \eqref{eq:766}.
		% $\inf_{{π}∈  𝓜 ₊(𝓧 × 𝓧)}∬_{𝓧×𝓧} d(x,y)^{r} {{π}(ⅾx,ⅾy) ⁄ u}+ \tilde{η}_1 Ⅾ({τ_{1\#}{π} ⁄ u}‖ P)+  \tilde{η}_2 Ⅾ({τ_{2\#}{π} ⁄ u}‖ Q).$
	% By exploiting the triangle inequality property, we rewrite~\eqref{eq:770} 
	% \begin{align} \uot_{r;η}(μ,ν)  ≤ u⋅ \uot_{r;\tilde{η}}(P,Q) + η_1 Ⅾ(τ_{1\#}π^{*}_{\tilde{η}₁;\tilde{η}₂}‖ P) +η_1 Ⅾ(P‖ μ)+  η_2 Ⅾ(τ_{2\#}π^{*}_{\tilde{η}₁;\tilde{η}₂}‖ Q)+ η_2 Ⅾ(Q‖ ν)\end{align}
	Taking the limit on the right-hand side, i.e., $\tilde{η}_1,\tilde{η}_2\to \infty$, we obtain that~(cf.\ Remark~\ref{rem:813})
		\begin{align}\label{eq:567}
			\uot_{r;η}(μ,ν)  ≤ u⋅ w_{r}(P,Q) + η_1 Ⅾ(P‖ μ)+ η_2 Ⅾ(Q‖ ν),
		\end{align} which is the first assertion.
		  %, the components $Ⅾ(τ_{1\#}π^{*}_{\tilde{η}₁;\tilde{η}₂}‖ P)$ and  $Ⅾ(τ_{2\#}π^{*}_{\tilde{η}₁;\tilde{η}₂}‖ Q)$ vanishes 
			Now, by exploiting Cauchy–Schwarz inequality, we have  
		\begin{align}\label{eq:572}
			 w_{r}(P,Q) ≤ \|d^r\|_F ⋅\|\tilde{\pi}\|_F ≤ \|d^r\|_F,
		\end{align} where $\tilde{\pi}$ is the optimal measures of $w_{r}(P,Q)$, and  $\|\tilde{\pi}\|_F≤ 1$ by Hölder’s inequality.
		Then, it follows from~\eqref{eq:567} and~\eqref{eq:572} for probability measures $P$ and $Q$ that
		\[%	{1⁄ w_{r}(P,Q)} &≤ {1⁄ \uot_{r;η}(P,Q)}		⏎
		{w_{r}(P,Q)⁄ \|d^r\|_F  }\le {w_{r}(P,Q)⁄ w_{r}(P,Q)} ≤ {w_{r}(P,Q)⁄ \uot_{r;η}(P,Q)}
		≤ \frac{\|d^r\|_F }{\uot_{r;η}(P,Q)},	⏎ \]
	  and thus the second assertion.
	\end{proof}

In the following Section~\ref{sec:EntUOTConti}, we discuss the regularized UOT problem, and its properties.  
\end{revise}
% \begin{align}\inf_{{π}∈  𝓜 ₊(𝓧 × 𝓧)} \lim_{\tilde{η}_1,\tilde{η}_2 → \infty} ∬_{𝓧×𝓧} d(x,y)^{r} {{π}(ⅾx,ⅾy)}+ \tilde{η}_1 Ⅾ({τ_{1\#}{π}}‖ P)+  \tilde{η}_2 Ⅾ({τ_{2\#}{π}}‖ Q) 	⏎ ≤ c_{η_1;η_2} \inf_{{π}∈  𝓜 ₊(𝓧 × 𝓧)} ∬_{𝓧×𝓧} d(x,y)^{r} {{π}(ⅾx,ⅾy)} + \lim_{\tilde{η}_1,\tilde{η}_2 → \infty}\tilde{η}_1 ❨ Ⅾ({τ_{1\#}{π}}‖ P)+  \tilde{η}_2 Ⅾ({τ_{2\#}{π}}‖ Q)❩⏎ + {η}_1 Ⅾ({τ_{1\#}{π}}‖ P)+  {η}_2 Ⅾ({τ_{2\#}{π}}‖ Q) , \end{align}where $c_{η_1;η_2}≥1$
\subsection{Entropy regularized UOT\label{sec:EntUOTConti}}
The generic setup of UOT is computationally challenging.
Thus, an entropy regularization approach was proposed, enabling computational efficiency with negligible compromise in accuracy.
\begin{definition}\label{def:526}
	The entropy regularized, unbalanced optimal transport problem is
	\begin{equation}\label{eq:W4}
		{\uot}_{r;η;λ}(μ,ν) ≔ \inf_{π∈ 𝓜 ₊(𝓧 × 𝓧)} ❬π| dʳ❭
			+ {1 ∕ λ} Ⅾ(π‖ μ⊗ ν)
			+ η₁ Ⅾ(τ_{1\#}π‖ μ)+ η₂ Ⅾ(τ_{2\#}π‖ ν),
	\end{equation}
	where $π$ is a non-negative, bivariate measure on $𝓧×𝓧$; the parameter $r≥1$ is the order of the Wasserstein distance, $λ>0$ accounts for the entropy regularization and $η₁ ≥ 0$, $η_2≥ 0$ are the marginal regularization parameters.
\end{definition}
\begin{remark}\label{rem:541}
	It is notable that the constants in~\eqref{eq:W4} regularizing the marginals are $η₁$ and $η₂$, while the constant for the discrepancy of the entropy is $\nicefrac1{λ}$ (and not $λ$). We maintain the constant~$1∕λ$ instead of~$λ$ to stay consistent with preceding literature.
	To recover the initial Wasserstein problem~\eqref{eq:Wasserstein}, it is essential that $η₁$ and $η₂$, as well as $λ$ are large or tend to $+∞$.
\end{remark}
As in Proposition~\ref{prop:464} above, we can establish the following optimal compromise between the measures $μ$ and $ν$.
\begin{proposition}[Optimal independent measure]\label{prop:536}
	When restricted to measures $c⋅μ⊗ ν$, $c≥0$, the measure
	\[	π⃰ ≔ c^*_{r;η;λ} ⋅ μ⊗ ν \]
	with constant
	\begin{equation}\label{eq:561}
		c^*_{r;η;λ}≔ e^{-{❬μ⊗ ν| dʳ❭ ∕ μ(𝓧)ν(𝓧)(η₁+η₂+1/λ)}} 
		μ(𝓧)^{-{η₂ ∕ η₁+η₂+1/λ}} 
		ν(𝓧)^{-{η₁ ∕ η₁+η₂+1/λ}}
	\end{equation}
	is optimal for the entropy (Kullback–Leibler) regularized, unbalance optimal transport problem~\eqref{eq:W4} with Kullback–Leibler divergences.
\end{proposition}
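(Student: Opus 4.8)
The plan is to mimic the one-parameter reduction used for Proposition~\ref{prop:464}, now applied to the entropy-regularized objective in~\eqref{eq:W4}. Restricting to the family $π_c ≔ c‧μ⊗ν$ with $c≥0$ turns the problem into a scalar minimization over $c$, so the whole task reduces to evaluating each of the four terms of~\eqref{eq:W4} as an explicit function of $c$ and then optimizing. The crucial preliminary computation is the pair of pushforward marginals of the product measure, namely $τ_{1\#}(μ⊗ν)= ν(𝓧)‧μ$ and $τ_{2\#}(μ⊗ν)= μ(𝓧)‧ν$, whence $τ_{1\#}π_c= c‧ν(𝓧)‧μ$ and $τ_{2\#}π_c= c‧μ(𝓧)‧ν$. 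Because each of these is a constant multiple of the corresponding reference measure, all Radon–Nikodým derivatives are constant, which is precisely what renders every divergence computable in closed form.

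Next I would evaluate the three divergences with the Kullback–Leibler formula~\eqref{eq:302}. The entropy term gives $\tfrac1λ Ⅾ(π_c∥μ⊗ν)= \tfrac1λ μ(𝓧)ν(𝓧)(c\log c+1-c)$, while the two marginal discrepancies $η₁ Ⅾ(τ_{1\#}π_c∥μ)$ and $η₂ Ⅾ(τ_{2\#}π_c∥ν)$ are each of the form $c\log(c‧m)$ plus a term linear in $c$, where $m$ denotes the relevant total mass. Together with the transport cost $c‧❬μ⊗ν|dʳ❭$, collecting all terms yields a single scalar function $G(c)$ whose only genuinely nonlinear parts are the $c\log c$ contributions.

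I would then differentiate $G$ in $c$. Differentiating any term of the form $c\log(c‧κ)$ produces $\log(c‧κ)+1$, and these $+1$'s cancel exactly against the linear $-c$ contributions coming from the KL formula, so $G'(c)=0$ collapses to a single \emph{linear} equation in $\log c$, namely $❬μ⊗ν|dʳ❭+ μ(𝓧)ν(𝓧)\big[(η₁+η₂+\tfrac1λ)\log c+ η₁\log ν(𝓧)+ η₂\log μ(𝓧)\big]=0$. Solving for $\log c$ and exponentiating gives exactly the constant $c⃰$ of~\eqref{eq:561}. Since the aggregated $c\log c$ terms have positive second derivative, $G$ is strictly convex on $c>0$, so this stationary point is the unique global minimizer and optimality within the family $\{c‧μ⊗ν\}$ follows.

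The main obstacle is not analytic but purely a matter of bookkeeping. The term $η₁ Ⅾ(τ_{1\#}π_c∥μ)$ measures the first marginal against $μ$, yet the mass entering its Radon–Nikodým derivative is $ν(𝓧)$ (because $τ_{1\#}(μ⊗ν)= ν(𝓧)‧μ$), and symmetrically for the second marginal. This is exactly what pairs $η₁$ with $\log ν(𝓧)$ and $η₂$ with $\log μ(𝓧)$, and hence is the origin of the apparently ``swapped'' exponents $μ(𝓧)^{-η₂/(η₁+η₂+1/λ)}$ and $ν(𝓧)^{-η₁/(η₁+η₂+1/λ)}$ in~\eqref{eq:561}. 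Correctly tracking which total mass multiplies which reference measure in each divergence is the only step where an index slip is likely.
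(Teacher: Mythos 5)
Your proposal is correct and follows essentially the same route as the paper's own proof: restrict to the family $c\cdot\mu\otimes\nu$, observe that all Radon--Nikod\'ym derivatives are constant, evaluate the three Kullback--Leibler terms in closed form, and solve the resulting scalar stationarity condition for $c$, which yields exactly~\eqref{eq:561} with the ``swapped'' exponents you correctly trace to $\tau_{1\#}(\mu\otimes\nu)=\nu(\mathcal X)\cdot\mu$ and $\tau_{2\#}(\mu\otimes\nu)=\mu(\mathcal X)\cdot\nu$. Your added remark on strict convexity of the scalar objective (guaranteeing the stationary point is the unique global minimizer) is a small but welcome strengthening that the paper leaves implicit.
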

\begin{proof}
	See Appendix~\ref{prop:536_proof}
\end{proof}
\begin{revise}
\begin{remark}[Upper bounds]\label{rem:upperbound}
	The determination of optimal quantities $c^*_{r;η;λ}$ and $c^*_{r;η}$ doesn't necessitate intricate optimization algorithms. 
	Furthermore, these upper bounds,  
	\begin{align}
		\uot_{r;η}(μ,ν) ≤ c^*_{r;η}   \text{and}  \uot_{r;η;λ}(μ,ν) ≤ c^*_{r;η;λ},
	\end{align}
	possess significant advantages across numerous scenarios, serving as robust initial values for iterative procedures in practical implementations.
	Through the utilization of these optimal quantities, we establish significant and non-trivial connections within a particular class of distances in the following Section~\ref{sec:Relation}.
\end{remark}
\end{revise}

	 %(see Section~\ref{Sec:IntSink}).
\begin{remark}[Sinkhorn divergence]
 The addition of regularization term accelerates the computational process of both, OT and UOT problems. However, the transportation costs or distances of entropy regularized OT and UOT problems violate the axiom of definiteness of distances.

 The technique of entropy debiasing was initially proposed for the regularized OT problem by \citet{ramdas2017wasserstein}, which is formulated as
 \begin{equation}\label{eq:sd_OT}
	\mathrm{sd}_{r;λ}(P,\tilde{P}) ≔  w_{r;λ}(P,Ꝓ) - \frac12\,{w}_{r;λ}(P,P) - {1⁄ 2}\, w_{r;λ}(\tilde{P},\tilde{P}),
 \end{equation}
 where $w_{r;λ}(P,Ꝓ) ≔ \min_{\pi \in 𝓟(\mathcal X × \mathcal X)}\langle d^r,\pi\rangle + \frac1\lambda Ⅾ\big(\pi‖ P \otimes \tilde{P}\big)$.
 Here, $π$ has marginal measures~$P$ and~$Ꝓ$ as defined in~\eqref{eq:W0}, $r≥1$, and $λ>0$ is an entropy regularization parameter. %Please note that the auto-correlation terms ${\ot}_{r\lambda}(P,{P})$ and ${\ot}_{r;\lambda}(\tilde{P},\tilde{P})$ follow the same definition for respective measures~($P$,$\tilde{P}$).

 Similarly, to evacuate the bias, one can state the debiased regularized UOT problem as
 \begin{align}\label{eq:586}
	\mathrm{sd}_{r;η;λ}(μ,ν)
 		≔	& \uot_{r;η;λ}(μ,ν) - {1∕2} \uot_{r;η;λ}(μ,μ) - {1∕2} \uot_{r;η;λ}(ν,ν) ⏎
 		  & + {1∕ 2 λ}❨μ(𝓧) -ν(𝓧)❩².
 \end{align}
This technique of entropy debiasing for regularized OT and UOT problems, is adapted from the idea of computation of \emph{kernel} based distance, i.e., MMD. The aforementioned formulations are called Sinkhorn divergence (cf.\ \citet{genevay2018learning}).
\end{remark}
\begin{revise}
\begin{remark}[entropy upper bounds]
	For probability measures $P$ and $\tilde P$, as a natural consequence of the non-negative entropy penalty term $Ⅾ(⋅‖⋅)≥0$, we have $w_r(P,\tilde P) ≤ w_{r;λ}(P,\tilde P)$.
	%As a natural consequence of non-negative entropy penalty term $Ⅾ(π‖μ⊗ν)≥0$ (balanced),
	Similarly, for the unbalanced case, we have $\uot_{r;η}(μ,ν) ≤ \uot_{r;η;λ}(μ,ν)$.
\end{remark}
\end{revise}
%	╭───────────────────────────────────────────────
%	│		mean discrepancy
\subsection{Maximum mean discrepancies}\label{Sec:MMD}
%In this section, we provide theoretical background of Maximum Mean Discrepancies~(MMDs).
Maximum mean discrepancy~(MMD) embeds measures into a reproducing kernel Hilbert space (RKHS).
This embedding gives rise to defining a distance on general, unbalance measures.

In what follows, we provide conditions so that the embedding is continuous with respect to the Wasserstein distance.
As well, we demonstrate the fast computation of the new distance on unbalanced measures.

%The maximum mean discrepancy~(MMD) is the distance between two arbitrary and/\,or probability measures. The  properties of MMDs depend on the underlying kernel which is interconnected to three cardinal principles of the kernel, i.e., positive semi-definite,  characteristic and universal. The rich kernel based theory and straight forward numerical implementation are the major advantages of MMDs.

%and $ \hat{\mu}_\Alpha\coloneqq \frac{1}{n} \sum_{i=1}^{n} k(x_i,\cdot)$ is an empirical estimate of ${\mu}_\Alpha$, where $\{x_i\}_{i=1}^n \overset{\mathrm{iid}}{\sim} \Alpha$.
%\begin{remark}
%	From literature standpoint, we notice that $\mmd^2_k(\Alpha,\tilde{\Alpha}) \coloneqq \|{\mu}_\Alpha - {\mu}_{\tilde{\Alpha}}\|^2_{\mathcal{H}}$ is unbiased computation of MMD, which results in a key property, i.e., $\mmd^2_k(\Alpha,\tilde{\Alpha}) =0$, if and only if $\Alpha =\tilde{\Alpha}$.
%\end{remark}
\begin{definition}[Positive semi-definite]
	The function $k∶ 𝓧 × 𝓧 → ℝ$ is a \emph{positive definite} and symmetric kernel, if
	\[	∑_{i=1}ⁿ∑_{j=1}^n cᵢ cⱼ k(xᵢ,xⱼ) ≥ 0 \]
	for any scalars $(c₁,‥,cₙ)∈ ℝⁿ$ and $(x₁,‥,xₙ)∈ 𝓧ⁿ$ and $n∈ ℕ$.
	%If the aforementioned inequality holds only for $∑ᵢⁿ cᵢ= 0$, then the kernel is said to be \emph{conditional positive}.
\end{definition}
\begin{definition}[Reproducing kernel Hilbert space, cf.\ \citet{Berlinet2004}]
	A Hilbert space $𝓗$ of functions from $𝓧→ ℝ$ (with inner product $❬⋅,⋅❭_𝓗$) is a RKHS, if there is a positive semi-definite kernel $k∶ 𝓧×𝓧→ ℝ$, %(a metric to compute distances between given measures)
	such that
	\begin{enumerate}
		\item $kₓ(⋅)≔  k(⋅,x)∈ 𝓗$	for all $x∈ 𝓧$, and
		\item $❬kₓ, φ❭_𝓗 = φ(x)$	for all $φ∈ 𝓗$ and $x∈ 𝓧$ (that is, the evaluation of the function $φ∈𝓗$ is a continuous, linear functional).
	\end{enumerate}
	$k$ is called the \emph{kernel} of $𝓗$ (cf.\ \citet{aronszajn1950theory}).
\end{definition}
%The following embedding is studied first in \citet{gretton2012kernel}, which addresses the problem of two-sample testing. As mentioned above, the robust nature of this kernel-based embedding theory facilitates to accommodate many research problems in applicable fields.
\begin{definition}[MMD]
	Let $𝓗ₖ$ be an RKHS with kernel $k$.
	The mean embedding is% kernel $μₖ$ of the measure~$μ$ is
	\begin{align}
		ι∶𝓜 &↪ 𝓗ₖ						\label{eq:660}⏎
		μ&↦ μₖ(⋅)≔ ∫_𝓧 k(x,⋅) μ(ⅾx).	\label{eq:659}
	\end{align}
	The RKHS distance between the mean embedding of given measures $μ$ and $ν$ is defined as 
	\begin{equation}\label{eq:711}\mmd_k(μ,ν)≔ ‖μₖ - νₖ‖_{𝓗ₖ}.\end{equation}
\end{definition}
This RKHS distance is popularly known as MMD.
Notably, it satisfies the axioms of a distance function.
Further, given the elementary function $f(⋅)= ∑_{j=1}ⁿ wⱼ k(xⱼ,⋅)∈ 𝓗ₖ$,
then $ι(μ)= f$ for the discrete measure $μ(⋅)= ∑_{j=1}ⁿ wⱼ δ_{xⱼ}$.

In other words, an RKHS-based interface between kernel methods and given distributions is offered by the mean embedding kernels. This distance is also known as \emph{mean map kernel}.

The critical relation to Wasserstein (Kantorovich) distances is given by
\[	∫_𝓧 φ ⅾμ=∫_𝓧 φ(x) μ(ⅾx)= ∫_𝓧 ❬φ(⋅)| k(⋅,x)❭μ(ⅾx)= ❬φ(⋅)\Big| ∫_𝓧 k(⋅,x) μ(ⅾx)❭= ❬φ| μₖ❭, \]
so that
\[	∫_𝓧 φ ⅾμ- ∫_𝓧 φ ⅾν = ❬μₖ-νₖ| φ❭≤ ‖μₖ-νₖ‖_{𝓗ₖ} ‖φ‖_{𝓗ₖ}\]
and
\begin{equation}\label{eq:670}
	‖μₖ-νₖ‖_{𝓗ₖ}= \sup_{φ∈ Bₖ} ∫_𝓧 φ ⅾμ-∫_𝓧 φ ⅾν,
\end{equation}
where the supremum is among all functions $φ∈ 𝓗ₖ$ in the unit ball $Bₖ⊂𝓗ₖ$ ($‖φ‖_{𝓗ₖ}≤ 1$).
By the Kantorovich--Rubinstein theorem (cf.\ \citet{RachevRueschendorf}), the equation~\eqref{eq:670} can serve as a definition of the Wasserstein distance of order $r=1$, where the functions $φ$ are among all Lipschitz‑1 functions.
\begin{revise}
\begin{remark}[MMD upperbound]\label{rem:MMDbound}
	In general, it holds that \begin{align}\label{eq:731}  ‖μₖ‖^2_{𝓗ₖ} = ∬_{𝓧×𝓧} k(x,x^\prime) μ(ⅾx)μ(ⅾx^\prime) \leq  \|k\|_∞ ⋅ μ(𝓧)^2.\end{align}
As a consequence of the triangle inequality and~\eqref{eq:731}, we have 
\begin{align} 
	\mmd_k(μ,ν)^2 = ‖μₖ - νₖ‖^2_{𝓗ₖ} ≤ ❪‖μₖ‖_{𝓗ₖ} + ‖νₖ‖_{𝓗ₖ}❫^2 ≤ \|k\|_∞ \cdot ❪μ(𝓧)^2 + ν(𝓧)^2❫,
\end{align}
which is the elementary upper bound.
\end{remark}
\end{revise}
\subsection{Relations between MMD and transportation problems}\label{sec:Relation}
In what follows we establish intricate connections between MMD and transportation problems, encompassing both probability and non-probability measures. 
To this end, consider first the measure $P= δₓ$ ($Q=δ_y$, resp.), the Dirac measure located at $x∈ 𝓧$ ($y∈ 𝓧$, resp.).
Assuming that the kernel $k$ is bounded by $C$ ($k≤C$), say, then
\begin{align}
		& wᵣ(δ_x,δ_y)= d(x,y),
\shortintertext{ while }
		&‖δ_{x,k}-δ_{y,k}‖²_{𝓗ₖ}= ‖k(⋅,x)- k(⋅,y)‖_{𝓗ₖ}²= k(x,x)-2k(x,y)+k(y,y)≤ 2C.
\end{align}
It follows that the transportation problems, in general, cannot be bounded by MMD on support sets~$𝓧$ with unbounded diameter $\sup_{x,y∈𝓧}d(x,y)$.
Note as well that $d(x,y)≔ ‖δₓ-δ_y‖_{𝓗ₖ}= √{k(x,x)-2k(x,y)+k(y,y)}$ defines a pseudo-metric on $𝓧$.

The following results provide conditions so that the embedding $ι∶𝓜 ↪ 𝓗ₖ$ is (Hölder‑)cont\-inuous.
It generalizes~\citet[Propostion~2]{VayerGribonval} slightly to kernels, for which the unit ball in~$𝓗ₖ$ is not necessarily uniformly Lipschitz, which includes the Laplacian kernel, e.g.

\rev{The subsequent Lemma~\ref{eq:681} expounds upon essential findings crucial for establishing the interrelationship between MMD and transportation problems.}
% \begin{remark}[Marginals]\label{rem:marginals}
% 	For unbalanced measures $μ$ and $ν$, the optimal measure $π^*$ does not necessarily guarantee the preservation of the marginals $μ$ and $ν$.
% 	However, for probability measures $P$ and $Q$, the optimal measure $π^*$ with the desired marginals $P$ and $Q$ is ensured by the fundamental property that probability measures $P$ and $Q$ define a joint distribution.
% 	We leverage this condition in the subsequent results.
% \end{remark}
\begin{revise}

\begin{lemma}\label{eq:681}
Let $μ$, $ν ∈ 𝓜 ₊(𝓧)$, and define probability measures $P(⋅)≔ {μ(⋅) ∕ μ(𝓧)}$ and $Q(⋅)≔ {ν(⋅) ∕ ν(𝓧)}$. Additionally, consider functions $φ∈ 𝓗ₖ$ in the unit ball $Bₖ⊂𝓗ₖ$.
It holds that
\begin{equation}❪𝔼_P φ- 𝔼_Q φ❫²≤ ∬_{𝓧×𝓧} k(x,x)- 2k(x,y)+ k(y,y)\tilde{π}(ⅾx,ⅾy),\label{eq:678}
\end{equation} where $\tilde{π}$ has marginals $P$ and $Q$.
Furthermore, for unbalanced measures $μ$ and $ν$ with $η_1  > 0$, $η_2 > 0$ and $α>0$, $c>0$, it holds that  
\begin{align}
  ❪∬_{𝓧×𝓧} φ(x)-φ(y) {π}(ⅾx,ⅾy)❫² ≤ u^*\,⋅\, &∬_{𝓧×𝓧} k(x,x)- 2k(x,y)+ k(y,y) π(ⅾx,ⅾy) 	⏎&\quad + η_1 Ⅾ(τ_{1\#}π‖ μ)+ η_2 Ⅾ(τ_{2\#}π‖ ν); \label{eq:679} \end{align}
  here, $π$ represents the optimal measure of the problem $\uot_{2α;\nicefrac{η}{c^2}}(μ,ν)$ and $u^*= \pi(\mathcal X\times \mathcal X)$.% , with marginals $τ_{1\#}{π}(𝓧) \eqqcolon μ^*$ and $τ_{2\#}{π}(𝓧) \eqqcolon ν^*$, and the constant $u^*≔ μ^*(𝓧)⋅ν^*(𝓧)$ represents the product of these marginals.% $μ^*(𝓧)=ν^*(𝓧)$,
\end{lemma}
\begin{proof}
	First, assume that $‖φ‖_{𝓗ₖ}≤ 1$, then
	\begin{align}
		❨φ(x)-φ(y)❩²	%&= |❬φ(⋅)| k(⋅,x)❭- ❬φ| k(⋅,y)❭|² ⏎
			&= ❬φ(⋅)| k(⋅,x)-k(⋅,y)❭² ⏎
			&≤ ‖φ‖_{𝓗ₖ}²⋅‖k(⋅,x)-k(⋅,y)‖_{𝓗ₖ}² ⏎
			&≤ k(x,x)- 2k(x,y)+ k(y,y).\label{eq:700}
	\end{align}
	Then, it follows with Jensen’s inequality that
	\begin{align}
		|𝔼_P φ- 𝔼_Q φ|²	&= ❪∬_{𝓧×𝓧} φ(x)-φ(y) \tilde\pi (ⅾx,ⅾy)❫² \\
						&≤ ∬_{𝓧×𝓧}❨φ(x)-φ(y)❩² \tilde\pi(ⅾx,ⅾy), \label{eq:17}
	\end{align}
	 where measure $\tilde \pi$ has marginals \(P\) and \(Q\).
	By substituting~\eqref{eq:700} in~\eqref{eq:17}, we obtain first assertion.
	
	% For some $α>0$ and $c>0$, we consider $π$ measure which has marginals $τ_{1\#}\bar{π}^*(𝓧) = μ^*$ and $τ_{2\#}\bar{π}^*(𝓧) = ν^* $, where the optimal measure $\bar{π}^*$ is from the problem
	% \begin{align}
	% 	\uot_{2α;\nicefrac{η}{c²} }(μ,ν)=	\inf_{\bar{π}∈ 𝓜 ₊(𝓧 × 𝓧)} ∬_{𝓧×𝓧}  d(x,y)^{2α} \bar{π}(ⅾx,ⅾy)+ {η_1⁄c²} Ⅾ(τ_{1\#}\bar{π}‖ μ) + {η_2⁄c²} Ⅾ(τ_{2\#}\bar{π}‖ ν).
	% \end{align}	
	Now, similar to first assertion, we have, with Jensen’s inequality
	\begin{align}
		❪∬_{𝓧×𝓧} φ(x)-φ(y) {π}(ⅾx,ⅾy)❫²	&= {u^*}^2❪∬_{𝓧×𝓧} φ(x)-φ(y) {{π}(ⅾx,ⅾy)⁄ u^*}❫² 	⏎
						&≤ u^*⋅∬_{𝓧×𝓧}❨φ(x)-φ(y)❩² {π}(ⅾx,ⅾy), 	⏎
						&≤ u^*⋅∬_{𝓧×𝓧}k(x,x)- 2k(x,y)+ k(y,y) {π}(ⅾx,ⅾy), \label{eq:692}
	\end{align}for the optimal measure $π$ as defined above and $u^\ast=\pi(\mathcal X\times\mathcal X)$. This quantity can be expressed with the marginals $\mu^\ast\coloneqq \tau_{1\#}\pi$ and $\nu^\ast\coloneqq \tau_{1\#}\pi$, it holds that $u^*=\sqrt{μ^*(𝓧)⋅ν^*(𝓧)}$.
	Finally, we obtain  
	\begin{align}❪∬_{𝓧×𝓧} φ(x)-φ(y) π(ⅾx,ⅾy)❫² ≤ u^*&\,⋅\, ⟮∬_{𝓧×𝓧} k(x,x)- 2k(x,y)+ k(y,y) π(ⅾx,ⅾy) 	⏎
		&+ η_1 Ⅾ(τ_{1\#}π‖ μ)+ η_2 Ⅾ(τ_{2\#}π‖ ν)⟯,  \end{align}
	as $Ⅾ(τ_{1\#}π‖ μ)≥0$ and $Ⅾ(τ_{2\#}π‖ ν)≥0$,  which is the second assertion. This completes the proof.
		%By substituting \eqref{eq:700} in \eqref{eq:17} and \eqref{eq:692}, we obtain both assertions.
\end{proof}%&≤ u\,⋅\,⟮ ∬_{𝓧×𝓧}k(x,x)- 2k(x,y)+ k(y,y) {π}(ⅾx,ⅾy) + η_1 Ⅾ(τ_{1\#}π‖ μ)+ η_2 Ⅾ(τ_{2\#}π‖ ν)⟯
\end{revise}
In the following result, we establish Hölder continuity between MMD distance for probability measures and Wasserstein distance.
\begin{proposition}\label{thm:666}
	Let $P$ and $Q$ be probability measures.
	Suppose that
	\begin{equation}\label{eq:686}
		k(x,x)-2k(x,y)+k(y,y)≤ c² d(x,y)^{2α}, x,y∈𝓧,
	\end{equation}
	for some $c>0$ and $α≥ 1/2$.
	Then it holds that
	\begin{equation}\label{eq:701}
		\mmd_k(P,Q)≤ c w_{2α}(P,Q)^{α}.
	\end{equation}
\end{proposition}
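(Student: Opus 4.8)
The plan is to argue entirely in the primal, using the Bochner (vector‑valued) integral representation of the mean embedding in $𝓗ₖ$, and to deliberately avoid the dual characterization~\eqref{eq:670}. A Kantorovich–Rubinstein / duality argument would compare the supremum over $Bₖ$ against the supremum over Lipschitz functions, and would therefore require $Bₖ$ to be uniformly Lipschitz — which is precisely the assumption this statement is designed to discard, the Laplacian kernel being the motivating example. The first observation is that~\eqref{eq:686} is merely Hölder continuity of the canonical feature map $x↦ k(‧,x)$ into $𝓗ₖ$: since $∥k(‧,x)-k(‧,y)∥_{𝓗ₖ}²= k(x,x)-2k(x,y)+k(y,y)$, the hypothesis reads
\[ ∥k(‧,x)-k(‧,y)∥_{𝓗ₖ} ≤ c‧ d(x,y)^α, \qquad x,y∈𝓧. \]

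Next I would represent the embeddings as Bochner integrals $P_k= ∫_𝓧 k(‧,x)\,P(ⅾx)$ and $Q_k= ∫_𝓧 k(‧,y)\,Q(ⅾy)$, and fix an arbitrary coupling $π$ on $𝓧⨯𝓧$ with first marginal $P$ and second marginal $Q$. Integrating out the unused coordinate, both embeddings can be written against the single measure $π$, so that
\[ P_k- Q_k = ∬_{𝓧⨯𝓧} ❨k(‧,x)-k(‧,y)❩\,π(ⅾx,ⅾy) \]
is one $𝓗ₖ$‑valued integral. Applying the triangle inequality for Bochner integrals, $∥∫ f∥≤ ∫∥f∥$, and then the Hölder bound above yields
\[ ∥P_k- Q_k∥_{𝓗ₖ} ≤ ∬_{𝓧⨯𝓧}∥k(‧,x)-k(‧,y)∥_{𝓗ₖ}\,π(ⅾx,ⅾy) ≤ c∬_{𝓧⨯𝓧} d(x,y)^α\,π(ⅾx,ⅾy). \]

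Since the left-hand side equals $∥P-Q∥_{𝓗ₖ}$ and is independent of $π$, it remains only to pass to the infimum over all couplings $π$ with marginals $P$ and $Q$ on the right-hand side. By Definition~\ref{def:Wasserstein} with order $r=α$, that infimum equals $w_α(P,Q)^α$, which gives~\eqref{eq:701}. I note in passing that the restriction $α≥\nicefrac12$ plays no role in this chain of inequalities; it enters only as the regime in which the kernels of interest actually satisfy~\eqref{eq:686}.

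The main obstacle is not the inequality but its measure‑theoretic underpinning: one must verify that $x↦ k(‧,x)$ is strongly measurable and Bochner integrable against $P$ and $Q$ — for which $∫_𝓧 √{k(x,x)}\,P(ⅾx)<∞$ (and likewise for $Q$) is sufficient — so that $P_k,Q_k∈𝓗ₖ$ are well defined and the vector‑valued triangle inequality is legitimate. The continuity supplied by~\eqref{eq:686} already secures measurability of the integrand, and once integrability is granted every remaining step is a direct substitution.
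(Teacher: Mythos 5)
Your proof is correct, and it takes a genuinely different route from the paper's: the paper argues in the dual, you argue in the primal. The paper starts from the variational characterization~\eqref{eq:670}, bounds $|\varphi(x)-\varphi(y)|^2\le k(x,x)-2k(x,y)+k(y,y)$ for $\|\varphi\|_{\mathcal H_k}\le 1$ via the reproducing property and Cauchy--Schwarz, and controls $|\mathbb{E}_P\varphi-\mathbb{E}_Q\varphi|^2$ by Jensen's inequality, which places $d(x,y)^{2\alpha}$ under the coupling integral; it then optimizes both sides. Your Bochner-integral argument replaces that squaring-plus-Jensen step by the vector-valued triangle inequality, and the difference is not cosmetic. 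Minimizing $\iint d^{2\alpha}\,\mathrm d\pi$ over couplings yields $w_{2\alpha}(P,Q)^{2\alpha}$, not $w_{\alpha}(P,Q)^{2\alpha}$, and since $w_{\alpha}\le w_{2\alpha}$ the dual proof as written only delivers the weaker bound $c\,w_{2\alpha}(P,Q)^{\alpha}$, identifying it with the right-hand side of~\eqref{eq:701} without comment. Your chain
\[
	\|P_k-Q_k\|_{\mathcal H_k}\;\le\; c\iint_{\mathcal X\times\mathcal X} d(x,y)^{\alpha}\,\pi(\mathrm dx,\mathrm dy),
\]
followed by the infimum over couplings, lands exactly on $c\,w_{\alpha}(P,Q)^{\alpha}=c\,\inf_{\pi}\iint d^{\alpha}\,\mathrm d\pi$, so your route is both tighter and the one that literally proves the stated inequality; it also substantiates your (correct) side remark that $\alpha\ge\nicefrac12$ is never used in your argument, whereas in the dual proof this hypothesis is exactly what makes $d^{2\alpha}$ an admissible transport cost of order $2\alpha\ge 1$. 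What the dual route buys in exchange is that it needs only scalar integration; your primal route requires strong measurability and Bochner integrability of $x\mapsto k(\cdot,x)$, which you correctly flag and discharge (the condition $\int_{\mathcal X}\sqrt{k(x,x)}\,P(\mathrm dx)<\infty$ is automatic for the bounded kernels of Table~\ref{tab:0}), and which the paper in any case assumes implicitly when writing the mean embedding~\eqref{eq:659}. One caveat common to both proofs: for $\alpha\in[\nicefrac12,1)$ the quantity $w_{\alpha}$ falls outside Definition~\ref{def:Wasserstein} (which requires $r\ge 1$), so \eqref{eq:701} must be read, as you do, with $w_{\alpha}(P,Q)^{\alpha}\coloneqq\inf_{\pi}\iint d^{\alpha}\,\mathrm d\pi$.
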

% \begin{proof}
% 	Let $π$ have marginals $P$ and $Q$.
% 	It follows with Jensen’s inequality that
% 	\begin{align}
% 		|𝔼_P φ- 𝔼_Q φ|²	&= ❪∬_{𝓧×𝓧} φ(x)-φ(y) π(ⅾx,ⅾy)❫² \\
% 						&≤ ∬_{𝓧×𝓧}❨φ(x)-φ(y)❩² π(ⅾx,ⅾy). \label{eq:17}
% 	\end{align}
% 	Now assume that $‖φ‖_{𝓗ₖ}≤ 1$, then
% 	\begin{align}
% 		❨φ(x)-φ(y)❩²	%&= |❬φ(⋅)| k(⋅,x)❭- ❬φ| k(⋅,y)❭|² ⏎
% 			&= ❬φ(⋅)| k(⋅,x)-k(⋅,y)❭² ⏎
% 			&≤ ‖φ‖_{𝓗ₖ}²⋅‖k(⋅,x)-k(⋅,y)‖_{𝓗ₖ}² ⏎
% 			&≤ k(x,x)- 2k(x,y)+ k(y,y).
% 	\end{align}
% 	Together with~\eqref{eq:17} it follows that
% 	\begin{align}
% 		|𝔼_Pφ- 𝔼_Qφ|² &≤ ∬_{𝓧×𝓧}k(x,x)-2k(x,y)-k(y,y) π(ⅾx,ⅾy)⏎
% 					  &≤ ∬_{𝓧×𝓧}c² d(x,y)^{2α} π(ⅾx,ⅾy)	\label{eq:702},
% 	\end{align}
% 	where we have used the assumption~\eqref{eq:686} in~\eqref{eq:702}. Maximizing the left-hand side with respect to all functions $φ∈ 𝓗ₖ$ with $‖φ‖_{𝓗ₖ}≤1$ using~\eqref{eq:670} and minimizing its right-hand side with respect to all measures~$π$ with marginals~$P$ and~$Q$ reveals that
% 	\[	 ‖Pₖ - Qₖ‖_{𝓗ₖ}² ≤ c² w_{2α}(P,Q)^{2α}.\]
% 	Hence, the assertion.
% \end{proof}
\begin{proof}
	% From \eqref{eq:678}, we have that
	% \begin{align}
	% 	|𝔼_P φ- 𝔼_Q φ|²≤ ∬_{𝓧×𝓧} k(x,x)- 2k(x,y)+ k(y,y) π(ⅾx,dy), \label{eq:17}
	% \end{align}
	% where $π$ have marginals $P$ and $Q$.
	It follows with \eqref{eq:678} that
	\begin{align}
		|𝔼_Pφ- 𝔼_Qφ|² &≤ ∬_{𝓧×𝓧}k(x,x)-2k(x,y)-k(y,y) π(ⅾx,ⅾy)⏎
					  &≤ ∬_{𝓧×𝓧}c² d(x,y)^{2α} π(ⅾx,ⅾy)	\label{eq:702},
	\end{align}
	where we have used~\eqref{eq:686}. Maximizing the left-hand side with respect to all functions $φ∈ 𝓗ₖ$ with $‖φ‖_{𝓗ₖ}≤1$ using~\eqref{eq:670} and minimizing its right-hand side with respect to all measures~$π$ with marginals~$P$ and~$Q$ reveals that
	\[	 \mmd_k(P,Q)^2 =‖Pₖ - Qₖ‖_{𝓗ₖ}² ≤ c² w_{2α}(P,Q)^{2α}.\]
	Hence, the assertion.
\end{proof}
\begin{revise}
The Wasserstein distance imposes fixed marginals in~\eqref{eq:const_W} and~\eqref{eq:const_Wb}.
In contrast, the unbalanced problem relaxes these constraints by incorporating soft marginal constraints in~\eqref{eq:W3}. 
Furthermore, the maximum mean discrepancy (cf.~\eqref{eq:711}) accommodates even for unbalanced measures.

The subsequent result establishes a connection between the MMD distance and UOT for general, unbalanced measures.
The quantities $\mmd_k(\mu,\mu^\ast)$ and $\mmd_k(\nu,\nu^\ast)$ in~\eqref{eq:704} below account for the varying masses of the measures.
These quantities vanish as $η₁↗ ∞$ and $η₂↗ ∞$, and if $\mu$ ($\nu$, resp.) are probability or balanced measures.
%Additionally, it elucidates a tight bound and computationally efficient bound.
\begin{proposition}\label{thm:863}
	Suppose that
	\begin{equation}\label{eq:763}
		k(x,x)-2k(x,y)+k(y,y)≤ c² d(x,y)^{2α}, x,y∈𝓧,
	\end{equation}
	for some $c>0$ and $α≥ 1/2$.
	Let $μ$, $ν ∈ 𝓜 ₊(𝓧),$ and $π$ be the optimal measure of the problem $\uot_{2α;\nicefrac{η}{c^2}}(μ,ν)$ with marginals $\mu^\ast\coloneqq τ_{1\#}{π}$ and $\nu^\ast\coloneqq τ_{2\#}{π}$.
	Then, it holds that
	\begin{equation}\label{eq:704}
		\mmd_k(μ,ν) ≤ c⋅ \sqrt{u^*}⋅\uot_{2α;\nicefrac{η}{c²} }(μ,ν)^{α} + \mmd_k(μ,μ^*)+ \mmd_k(ν,ν^*),
	\end{equation}
	where $u^*=\sqrt{μ^*(𝓧)⋅ν^*(𝓧)}$. 
	% Furthermore, it holds that
	% \begin{equation}\label{eq:822}
	% 	\mmd_k(μ,ν)   ≤  c⋅ u⋅\uot_{2α;\nicefrac{η}{c²} }(μ,ν)^{α} +\sqrt{‖k‖_{∞}}⋅μ(𝓧) + \sqrt{‖k‖_{∞}}⋅ν(𝓧),
	% \end{equation}	
	% where $u≔ μ(𝓧)⋅ν(𝓧) $
	% \begin{equation}\label{eq:776}
	% 		‖μ_k- ν_k‖_{𝓗ₖ} ≤ c⋅ u⋅  ⟮w_{2α}(P,Q) + {η_1 ⁄ c²⋅ u} Ⅾ(P‖ μ)+ {η_2 ⁄ c²⋅ u} Ⅾ(Q‖ ν)⟯^{½}. %\text{ and }  ‖μ_k- ν_k‖_{𝓗ₖ}. ≤ c⋅ \sqrt{u⋅c^*_{2α;\nicefrac{η}{c²}}},
	% \end{equation}
	
	%where $c^*_{2α;\nicefrac{η}{c²}}= e^{-{❬ μ⊗ν| d^{2α}❭ ∕ \nicefrac{η₁}{c²}+\nicefrac{η₂}{c²}}}⋅ μ(𝓧)^{η₁ ∕ η₁+η₂}⋅ν(𝓧)^{η₂ ∕ η₁+η₂}.$
\end{proposition}
\begin{proof}
	Consider the optimal measure $π$ of the problem $\uot_{2α;\nicefrac{η}{c^2}}(μ,ν)$, which has marginals $μ^*$ and $ν^*$ and note that $\mu^\ast(\mathcal X)=\nu^\ast(\mathcal X)$.
	Utilizing~\eqref{eq:679} and~\eqref{eq:763}, we have 
	\begin{align}
		❪∬_{𝓧×𝓧} φ(x)-φ(y) {π}(ⅾx,ⅾy)❫² &≤ u^*⋅⟮∬_{𝓧×𝓧} k(x,x)- 2k(x,y)+ k(y,y) π(ⅾx,ⅾy)	⏎ & \qquad\qquad + η_1 Ⅾ(τ_{1\#}π‖ μ)+ η_2 Ⅾ(τ_{2\#}π‖ ν)⟯⏎
					  &≤ u^*⋅⟮∬_{𝓧×𝓧} c² d(x,y)^{2α} π(ⅾx,ⅾy)+ η_1 Ⅾ(τ_{1\#}π‖ μ)	⏎ & \qquad + η_2 Ⅾ(τ_{2\#}π‖ ν)⟯	⏎
					  % &= c²⋅u^*⋅⟮∬_{𝓧×𝓧}  d(x,y)^{2α} π(ⅾx,ⅾy)+ {η_1⁄c²} Ⅾ(τ_{1\#}π‖ μ)	⏎ & \qquad + {η_2⁄c²} Ⅾ(τ_{2\#}π‖ ν)⟯	⏎
					  &= c²⋅u^*⋅⟮\inf_{\bar{π}∈ 𝓜 ₊(𝓧 × 𝓧)} ∬_{𝓧×𝓧}  d(x,y)^{2α} \bar{π}(ⅾx,ⅾy)+ {η_1⁄c²} Ⅾ(τ_{1\#}\bar{π}‖ μ)
					  ⏎ & \qquad + {η_2⁄c²} Ⅾ(τ_{2\#}\bar{π}‖ ν)⟯
					  \label{eq:784}.	\end{align}
	Maximizing the left-hand side with respect to all functions $φ∈ 𝓗ₖ$ with $‖φ‖_{𝓗ₖ}≤1$ using~\eqref{eq:670}, we obtain 
	\begin{equation}\label{eq:828}\mmd_k(μ^*,ν^*)^2= ‖μ^*_k- ν^*_k‖^2_{𝓗ₖ} ≤ c²⋅ u^*⋅\uot_{2α;\nicefrac{η}{c²} }(μ,ν)^{2α}.\end{equation}
	By adding $\mmd_k(ν,ν^*)$ on both side of \eqref{eq:828} and utilizing the triangle inequality (that is, $\mmd_k(μ^*,ν) ≤ \mmd_k(μ^*,ν^*)+ \mmd_k(ν,ν^*)$) on the left-hand side to obtain
	\begin{align}\label{eq:843}	 
		\mmd_k(μ^*,ν)   &≤  c⋅ \sqrt{u^*}⋅\uot_{2α;\nicefrac{η}{c²} }(μ,ν)^{α} +\mmd_k(ν,ν^*).
	\end{align} 
	Repeating the same procedure by adding $\mmd_k(μ,μ^*)$ on both side of \eqref{eq:843}, we obtain  
	\begin{align}\label{eq:846}	 
		\mmd_k(μ,ν)   &≤  c⋅ \sqrt{u^*}⋅\uot_{2α;\nicefrac{η}{c²} }(μ,ν)^{α} +\mmd_k(ν,ν^*) + \mmd_k(μ,μ^*),
	\end{align} which is the assertion.
	% Note that as a consequence of $0≤ μ^* ≤ μ$, we have bounds for $\mmd_k(μ,μ^*)$
	% \begin{align}\label{eq:858}
	% 	\mmd_k(μ,μ)=0 ≤\mmd_k(μ,μ^*)≤ ‖μ_k‖_{𝓗ₖ},
	% \end{align}
	% and same argument analogously follows for $\mmd_k(ν,ν^*)$ as $0≤ ν^* ≤ ν.$ 
	%By utilizing the inequality of MMD (cf.\ Remark~\ref{rem:MMDIneq}), the quantities ($\mmd_k(μ,μ^*)$  and $\mmd_k(ν,ν^*)$) in the first assertion~\eqref{eq:846} transforms into 
	%\begin{align}\label{eq:863}
	%	\mmd_k(μ,ν)   &≤  c⋅ u⋅\uot_{2α;\nicefrac{η}{c²} }(μ,ν)^{α} +\sqrt{‖k‖_{∞}}⋅μ(𝓧) + \sqrt{‖k‖_{∞}}⋅ν(𝓧),
	%\end{align}
	%where  $u=μ(𝓧)⋅ν(𝓧)≥\sqrt{u^*}.$ Hence, the second assertion. This completes the proof. 
 	% Now, it follows with \eqref{eq:828} that 
	% \begin{align}	 ‖μ_k- ν_k‖^2_{𝓗ₖ}. &≤ c²⋅ u⋅e^{-{❬ μ⊗ν| d^{2α}❭ ∕ \nicefrac{η₁}{c²}+\nicefrac{η₂}{c²}}}⋅ μ(𝓧)^{\nicefrac{η₁}{c²} ∕ \nicefrac{η₁}{c²}+\nicefrac{η₂}{c²}}⋅ν(𝓧)^{\nicefrac{η₂}{c²} ∕ \nicefrac{η₁}{c²}+\nicefrac{η₂}{c²}}	⏎&= c²⋅ u⋅e^{-{❬ μ⊗ν| d^{2α}❭ ∕ \nicefrac{η₁}{c²}+\nicefrac{η₂}{c²}}}⋅ μ(𝓧)^{η₁ ∕ η₁+η₂}⋅ν(𝓧)^{η₂ ∕ η₁+η₂},\end{align} where we have utilized  Proposition~\ref{prop:464} (cf.\ Remark~\ref{rem:upperbound}).
	%	=  c²⋅ u^2⟮	w_{2α}(P,Q) + {η_1 ⁄ c²⋅ u}  Ⅾ(P‖ μ)+ {η_2 ⁄ c²⋅ u} Ⅾ(Q‖ ν) ⟯.
\end{proof}

\end{revise}
The kernels we consider below for fast summation techniques satisfy this general relation~\eqref{eq:686} with $α=1$ and $α=1/2$. 
Note that, for $α=1/2$, the relations in \eqref{eq:701} and~\eqref{eq:704} reveal Hölder (and not Lipschitz) continuity of the embedding.
%\[μ(𝓧)⋅ν(𝓧)⋅∬_{𝓧×𝓧} d(x,y)^{r} \tilde{π}(ⅾx,ⅾy)-e^{-{❬ μ⊗ ν | d^r❭ ∕ η₁+η_2}}⋅ μ(𝓧)^{η₁ ∕ η₁+η₂}⋅ν(𝓧)^{η₂ ∕ η₁+η₂},\]
\paragraph{Prominent kernels.}
In applications, the choice of the kernel function is important, and it depends on the problem of interest.
The most prominent kernels used in MMDs are Gaussian~(Gauss), inverse multi-quadratic~(IMQ), Laplace~(Lap) and energy~(E) kernels.
The following table (Table~\ref{tab:0}) provides these kernels, together with the constants in the preceding Proposition~\ref{thm:666}.
%\todo{Hölder constants explanation}
\begin{table}[ht]% here, Here, top, page, bottom, !; \usepackage{booktabs}
	\centering
	\begin{tabular}{llcc}
		\toprule
		Kernel & & Hölder exponent $α$ &Hölder constant $c$  \\
		\midrule
		Gauss & $k^{\text{Gauss}}({x},\tilde{x})\coloneqq {e}^{\nicefrac{-\|x-{\tilde{x}}\|^2}{ℓ^2}}$ & $1$ & $\nicefrac2{ℓ²}$ \\
		Laplace & $k^{\text{Lap}}({x},\tilde{x})\coloneqq {e}^{\nicefrac{-\|x-{\tilde{x}}\|}{ℓ}}$ & $1/2$ & $\nicefrac2{ℓ}$ \\
		Inverse multiquadratic & $k^{\text{IMQ}}(x,𝑥)≔ \nicefrac{1}{√{\|x-{\tilde{x}}\|^2+ \mathrm{c}^2}}$ & $1$ & $\nicefrac2{\mathrm{c}⁴}$ \\
		Energy & $k^{\text{e}}(x,𝑥)≔-\|x-{\tilde{x}}\|$ & $1/2$ & $√2$ \\
		\bottomrule
	\end{tabular}
	\caption{Prominent kernels and their constant in relation to the transportation problems, cf.~\eqref{eq:701}\label{tab:0}}
\end{table}

The discrepancy obtained by using the energy kernel, $k^{\text{e}}$, is known as \emph{energy} distance.
The energy distance was originally proposed by \citet{szekely2004testing}, which is occasionally known as Harald Cramér's distance.
In general, the energy kernel is not positive definite although, for compact spaces $𝓧$, it is positive definite with a minor modification, see \citet[Proposition~3.2]{neumayer2021optimal}.
Also, see \citet[Corollary~2.15]{graf2013efficient} for an explicit proof of conditional positivity of the energy kernel $k^{\text{e}}(⋅,⋅)$.
The equivalence of energy distance and Cramér's distance cannot be extended to higher dimensions because, unlike the energy distance, Cramér's distance is not rotation invariant.
\begin{revise}
\end{revise}
\subsection*{Convergence of 1-UOT to MMD energy distance \label{sec:convUOTMMD}}
The following results revisit the interconnection between OT and MMD for the energy kernel, elucidating the correspondence between UOT and MMD.
\begin{lemma}\label{lem:inter_ot_mmd}
 	Let $P$ and $Ꝓ$ be the  probability measures and $λ>0$ be the regularization parameter.
 	Then, the following relationships between the standard OT and the energy distance using debiased regularized OT $\mathrm{sd}_{1;λ}$
 	\begin{equation}\label{eq:lem_eq_1}
 		\lim_{λ → ∞} \mathrm{sd}_{1;λ}(P,Ꝓ) = {\ot}_{1}(P,Ꝓ)  \text{and}\quad
 		\lim_{λ → 0} \mathrm{sd}_{1;λ}(P,Ꝓ) = {\mmd_{k^{\text{e}}}(P,Ꝓ)}
 		\end{equation} hold true.
 \end{lemma}
 %\ot_{r}(P,\tilde{P})\xleftarrow[]{\lim_{\lambda \to \infty}} sd_{r;\lambda}(P,\tilde{P}) \xrightarrow[]{\lim_{\lambda \to 0}} sqrt{\mmd^2_{k^{\text{e}}}(P,\tilde{P})}
 \begin{proof}
 	See \citet{ramdas2017wasserstein} and \citet[Proposition~2]{neumayer2021optimal}.
 \end{proof}
The following corollary discloses relationships between debiased regularized UOT and $\mmd_{k^\text{e}}(P,Ꝓ)$ for probability measures~$P$ and $Ꝓ$.
 \begin{corollary}\label{cor:477}
 	Let $λ>0$ be the entropy regularization parameter and $η>0$ be the marginal regularization parameter. Then, the relationships between OT with relaxed marginal constraints and energy distance using debiased regularized UOT
 	%\begin{equation}
		\begin{align}
		\lim_{λ → ∞} \mathrm{sd}_{1;η;λ}(P,Ꝓ) = \uot_{1;η}(P,Ꝓ) \text{ and } 		 \lim_{\lambda \to 0} \lim_{\eta \to \infty}\mathrm{sd}_{1;\eta;\lambda}(P,\tilde{P}) = \mmd_{k^{\text{e}}}(P,\tilde{P})
		%UOT_{r;\eta}(P,\tilde{P})\xleftarrow[]{\lim_{\lambda \to \infty}} sd_{r;\eta;\lambda}(P,\tilde{P}) \xrightarrow[\lim_{\eta \to \infty}]{\lim_{\lambda \to 0}} \sqrt{\mmd^2_{k^{\text{e}}}(P,\tilde{P})}
	 \end{align}
  	%\end{equation}
	  hold true for probability measures~$P$ and~$\tilde{P}$.
\end{corollary}
\begin{revise}
\begin{proof}
	See Appendix~\ref{cor:477Proof}
\end{proof}
\end{revise} 

\rev{Up to this stage, we have established relationships and bounds between MMD and UOT through Propositions~\ref{prop:464},~\ref{prop:547},~\ref{prop:536},~\ref{thm:666},~\ref{thm:863}, and Corollary~\ref{cor:477}. 
These findings present value for practitioners, offering a reliable theoretical guideline for selecting readily accessible metrics.
These connections furnish theoretical assurances, aiding practitioners in choosing distances that are computationally feasible.
For instance, while computing MMD is generally straightforward, tasks involving Wasserstein distance and UOT can be daunting.
Therefore, by leveraging Hölder continuity results, practitioners can confidently opt for MMD, ensuring practicality across various applications.}

%	╭───────────────────────────────────────────────
%	│		discrete framework
\section{Distances in discrete framework}\label{sec:discrete}
Computational aspects of the problems predominantly build on discrete measures, which motivates us to investigate faster and stable computational methods.
To this end, we reformulate the aforementioned problems in a discrete setting.
More precisely, the unbalanced optimal transport problem must be stated in its dual version to accommodate the fast summation technique presented below. On the other hand, the maximum mean discrepancy formulation does not require any additional modifications.

The discrete measures considered in the following sections are $μ=∑_{i=1}ⁿ μᵢ δ_{xᵢ}$ and $ν= ∑_{j=1}^ñ νⱼ δ_{𝑥ⱼ}$.

\subsection{Unbalanced optimal transport}
To reformulate problem~\eqref{eq:W4}, we introduce the distance or cost matrix $d ∈ ℝ^{n×ñ}$ with entries $d_{ij}≔ d(xᵢ, \tilde xⱼ)$.
With that, the discrete optimization problem~\eqref{eq:W4} is
\begin{align}
	\uot_{r;η;λ}(μ,ν)= \inf_π ∑_{ij} dʳ_{ij} π_{ij}
	&	+{1∕λ} ❨∑_{ij}π_{ij}\log {π_{ij} ∕ μᵢνⱼ}
		+ ∑_{i,j}μᵢ νⱼ - ∑_{ij}π_{ij}❩\\
	&	+ η₁ ❨∑ᵢ ∑ⱼ π_{ij}\log{∑_{j̕}π_{ij̕} ∕ μᵢ} + ∑ᵢ μᵢ - ∑ᵢ∑ⱼ π_{ij}❩\\
	&	+ η₂ ❨∑ⱼ ∑ᵢ π_{ij}\log{∑_{i̕}π_{i̕j} ∕ νⱼ} + ∑ᵢ νⱼ - ∑ⱼ∑ᵢ π_{ij}❩.
\end{align}
Here, the matrix $π∈ ℝ^{n×ñ}$ with positive entries represents the transportation plan, and the parameters are as defined in Definition~\ref{def:526}.

\medskip

 The duality reformulation gives rise to the accelerated algorithm of the UOT problem provided below. This explicit formulation justifies our advanced technique.

The following statement provides the dual formulation of the (primal) optimization problem~\eqref{def:526}, which is the foundation of our accelerated algorithm.
The proof builds on the Fenchel-Rockafellar Theorem, and we may refer to \citet[Theorem~3.2]{chizat2018scaling}.
\begin{figure}
	\begin{tikzpicture}[scale=0.45]
		\begin{axis}[%
		scatter/classes={%
			a={draw=blue},%
			b={mark=triangle*,draw=red}}]
		\addplot[scatter,only marks,%
			scatter src=explicit symbolic]%
		table[meta=label] {
		x y label
		-1 -1.3 a
		-2 -1.3 a
		-1.5 -0.3 a
		-0.5 -1.1 a
		-0.2 -1.5 a

		1.5 2.0 a
		2.0 2.3 a
		3.0 2.5 a
		3.5 2.9 a
		3.9 2.3 a
		2.2 2.5 a
		3.3 2.9 a
		3.3 3.2 a
		3.5 3.6 a
		3.9 3.5 a
		1.5 2.3 a

		2.0 3.9 a

		2.0 3.1 a

		4.5 6.0 b
		3.0 6.3 b

		6.0 6.5 b
		6.5 6.1 b
		6.9 6.3 b
		5.2 6.5 b
		6.3 6.7 b
		6.3 6.2 b
		6.5 6.4 b
		6.9 6.5 b
		4.5 5.1 b
		5.0 5.3 b
		5.0 6.1 b

		8 8.3 b
		7 8.3 b
		6.5 7.9 b
		6.5 8.3 b
		6.2 8.5 b
			};
		\end{axis}
		\end{tikzpicture}
		\begin{tikzpicture}[scale=0.45]
			\begin{axis}[%
			scatter/classes={%
				a={draw=blue},%
				b={mark=triangle*,draw=red}}]
			\addplot[scatter,only marks,%
				scatter src=explicit symbolic]%
			table[meta=label] {
			x y label
			-1 -1.3 a
			-2 -1.3 a
			-1.5 -0.3 a
			-0.5 -1.1 a
			-0.2 -1.5 a

			1.5 2.0 a
			2.0 2.3 a
			3.0 2.5 a
			3.5 2.9 a
			3.9 2.3 a
			2.2 2.5 a
			3.3 2.9 a
			3.3 3.2 a
			3.5 3.6 a
			3.9 3.5 a
			1.5 2.3 a

			2.0 3.9 a

			2.0 3.1 a

			4.5 6.0 b
			3.0 6.3 b

			6.0 6.5 b
			6.5 6.1 b
			6.9 6.3 b
			5.2 6.5 b
			6.3 6.7 b
			6.3 6.2 b
			6.5 6.4 b
			6.9 6.5 b
			4.5 5.1 b
			5.0 5.3 b
			5.0 6.1 b

			8 8.3 b
			7 8.3 b
			6.5 7.9 b
			6.5 8.3 b
			6.2 8.5 b
				};
				%\draw[green] (axis cs:6.5,7.9) -- (axis cs:-0.2,-1.5);
				%\draw[green] (axis cs:8,8.3) -- (axis cs:-2,-1.3);
				%\draw[green] (axis cs:-1.5,-0.3) -- (axis cs:6.2,8.5);
				%\draw[green] (axis cs:-0.5,-1.1) -- (axis cs:6.5,8.3 );
				%\draw[green] (axis cs:-1,-1.3) -- (axis cs:7,8.3);
				\draw[green] (axis cs:2.0,3.9) -- (axis cs:3.0,6.3);
				\draw[green] (axis cs:3.3,3.2) -- (axis cs:4.5,6.0 );
				\draw[green] (axis cs:3.5,2.9) -- (axis cs:5.0,6.1);
				\draw[green] (axis cs:3.9,3.5) -- (axis cs:5.0,5.3);
				\draw[green] (axis cs:3.5,3.6 ) -- (axis cs:4.5,5.1);
				\draw[green] (axis cs:3.3,2.9) -- (axis cs:5.2,6.5);
				\draw[green] (axis cs:3.9,2.3) -- (axis cs:6.0,6.5);
				\draw[green] (axis cs:3.0,2.5) -- (axis cs:6.3,6.7);
				%\draw[green] (axis cs:1.5,2.0) -- (axis cs:6.9,6.5);
				%\draw[green] (axis cs:1.5,2.3) -- (axis cs:6.9,6.3 );
				%\draw[green] (axis cs:2.0,2.3) -- (axis cs:6.5,6.4);
				\draw[green] (axis cs:2.0,3.1) -- (axis cs:6.5,6.1);
				\draw[green] (axis cs:2.2,2.5) -- (axis cs:6.3,6.2);
				%\draw[green] (axis cs:2.0,2.3) -- (axis cs:3.0,6.3);
			\end{axis}
			\end{tikzpicture}
			\begin{tikzpicture}[scale=0.45]
				\begin{axis}[%
				scatter/classes={%
					a={draw=blue},%
					b={mark=triangle*,draw=red}}]
				\addplot[scatter,only marks,%
					scatter src=explicit symbolic]%
				table[meta=label] {
				x y label
				-1 -1.3 a
				-2 -1.3 a
				-1.5 -0.3 a
				-0.5 -1.1 a
				-0.2 -1.5 a

				1.5 2.0 a
				2.0 2.3 a
				3.0 2.5 a
				3.5 2.9 a
				3.9 2.3 a
				2.2 2.5 a
				3.3 2.9 a
				3.3 3.2 a
				3.5 3.6 a
				3.9 3.5 a
				1.5 2.3 a

				2.0 3.9 a

				2.0 3.1 a

				4.5 6.0 b
				3.0 6.3 b

				6.0 6.5 b
				6.5 6.1 b
				6.9 6.3 b
				5.2 6.5 b
				6.3 6.7 b
				6.3 6.2 b
				6.5 6.4 b
				6.9 6.5 b
				4.5 5.1 b
				5.0 5.3 b
				5.0 6.1 b

				8 8.3 b
				7 8.3 b
				6.5 7.9 b
				6.5 8.3 b
				6.2 8.5 b
					};
					%\draw[green] (axis cs:6.5,7.9) -- (axis cs:-0.2,-1.5);
					%\draw[green] (axis cs:8,8.3) -- (axis cs:-2,-1.3);
					%\draw[green] (axis cs:-1.5,-0.3) -- (axis cs:6.2,8.5);
					%\draw[green] (axis cs:-0.5,-1.1) -- (axis cs:6.5,8.3 );
					%\draw[green] (axis cs:-1,-1.3) -- (axis cs:7,8.3);
					\draw[green] (axis cs:2.0,3.9) -- (axis cs:3.0,6.3);
					\draw[green] (axis cs:3.3,3.2) -- (axis cs:4.5,6.0 );
					\draw[green] (axis cs:3.5,2.9) -- (axis cs:5.0,6.1);
					\draw[green] (axis cs:3.9,3.5) -- (axis cs:5.0,5.3);
					\draw[green] (axis cs:3.5,3.6 ) -- (axis cs:4.5,5.1);
					\draw[green] (axis cs:3.3,2.9) -- (axis cs:5.2,6.5);
					\draw[green] (axis cs:3.9,2.3) -- (axis cs:6.0,6.5);
					\draw[green] (axis cs:3.0,2.5) -- (axis cs:6.3,6.7);
					\draw[green] (axis cs:1.5,2.0) -- (axis cs:6.9,6.5);
					\draw[green] (axis cs:1.5,2.3) -- (axis cs:6.9,6.3 );
					\draw[green] (axis cs:2.0,2.3) -- (axis cs:6.5,6.4);
					\draw[green] (axis cs:2.0,3.1) -- (axis cs:6.5,6.1);
					\draw[green] (axis cs:2.2,2.5) -- (axis cs:6.3,6.2);
					%\draw[green] (axis cs:2.0,2.3) -- (axis cs:3.0,6.3);
				\end{axis}
				\end{tikzpicture}
				\begin{tikzpicture}[scale=0.45]
					\begin{axis}[%
					scatter/classes={%
						a={draw=blue},%
						b={mark=triangle*,draw=red}}]
					\addplot[scatter,only marks,%
						scatter src=explicit symbolic]%
					table[meta=label] {
					x y label
					-1 -1.3 a
					-2 -1.3 a
					-1.5 -0.3 a
					-0.5 -1.1 a
					-0.2 -1.5 a

					1.5 2.0 a
					2.0 2.3 a
					3.0 2.5 a
					3.5 2.9 a
					3.9 2.3 a
					2.2 2.5 a
					3.3 2.9 a
					3.3 3.2 a
					3.5 3.6 a
					3.9 3.5 a
					1.5 2.3 a

					2.0 3.9 a

					2.0 3.1 a

					4.5 6.0 b
					3.0 6.3 b

					6.0 6.5 b
					6.5 6.1 b
					6.9 6.3 b
					5.2 6.5 b
					6.3 6.7 b
					6.3 6.2 b
					6.5 6.4 b
					6.9 6.5 b
					4.5 5.1 b
					5.0 5.3 b
					5.0 6.1 b

					8 8.3 b
					7 8.3 b
					6.5 7.9 b
					6.5 8.3 b
					6.2 8.5 b
						};
						\draw[green] (axis cs:6.5,7.9) -- (axis cs:-0.2,-1.5);
						\draw[green] (axis cs:8,8.3) -- (axis cs:-2,-1.3);
						\draw[green] (axis cs:-1.5,-0.3) -- (axis cs:6.2,8.5);
						\draw[green] (axis cs:-0.5,-1.1) -- (axis cs:6.5,8.3 );
						\draw[green] (axis cs:-1,-1.3) -- (axis cs:7,8.3);
						\draw[green] (axis cs:2.0,3.9) -- (axis cs:3.0,6.3);
						\draw[green] (axis cs:3.3,3.2) -- (axis cs:4.5,6.0 );
						\draw[green] (axis cs:3.5,2.9) -- (axis cs:5.0,6.1);
						\draw[green] (axis cs:3.9,3.5) -- (axis cs:5.0,5.3);
						\draw[green] (axis cs:3.5,3.6 ) -- (axis cs:4.5,5.1);
						\draw[green] (axis cs:3.3,2.9) -- (axis cs:5.2,6.5);
						\draw[green] (axis cs:3.9,2.3) -- (axis cs:6.0,6.5);
						\draw[green] (axis cs:3.0,2.5) -- (axis cs:6.3,6.7);
						\draw[green] (axis cs:1.5,2.0) -- (axis cs:6.9,6.5);
						\draw[green] (axis cs:1.5,2.3) -- (axis cs:6.9,6.3 );
						\draw[green] (axis cs:2.0,2.3) -- (axis cs:6.5,6.4);
						\draw[green] (axis cs:2.0,3.1) -- (axis cs:6.5,6.1);
						\draw[green] (axis cs:2.2,2.5) -- (axis cs:6.3,6.2);
						%\draw[green] (axis cs:2.0,2.3) -- (axis cs:3.0,6.3);
					\end{axis}
					\end{tikzpicture}
					\tikz[x=1.1cm,y=1.2cm]{\draw[->](0,0)--(11.5,0);\node[text width=2cm] at (0,0.1) {$0$};\node[text width=2cm] at (13.0,0.1) {$\infty$};;\node[text width=2cm] at (6.5,0.2) {$\eta$};}
					\caption{Transportation (indicated by green \emph{lines}) of measures (circles and triangles) with equal mass. The optimal transportation plan bypasses the outliers for smaller parameters $η$, and appraises for larger parameters $η$, cf.\ Remark~\ref{rem:813}.\label{fig:UOTtoOT}}
	\end{figure}
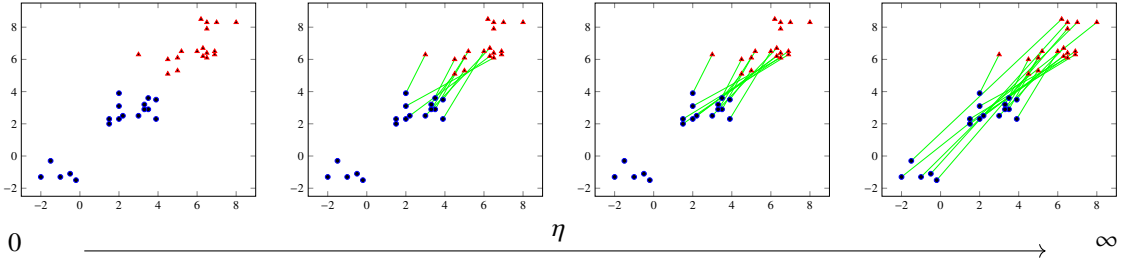
%\begin{theorem}[cf.\ {\citet[Remark~4.28]{PeyreCuturi}}] ~\label{thm:dual_EOT}
%	For $\lambda> 0$, the duality of entropy regularized OT Problem~\eqref{eq:eot} is
%	\begin{equation}\label{eq:dual_eot}
%		\mathcal{D}(\beta,\gamma) \coloneqq \max_{\beta\in\mathbb R^n,\ \gamma\in\mathbb R^{\tilde n}} \langle\beta,  p \rangle+  \langle\gamma,  \tilde{p}\rangle -\frac1\lambda \sum_{i=1}^n\sum_{j=1}^{\tilde n} {e}^{\lambda\, \beta_i}\, {e}^{-\lambda\, d_{ij}^r}\, {e}^{\lambda\, \gamma_j},
%		\end{equation}
%		which is a strictly concave, smooth, and unconstrained optimization problem.
%\end{theorem}
%We would like to point out that the aforementioned duality results can also be derived by traditional Lagrangian duality theory~(cf. \citet[ Proposition~3.5]{lakshmanan2022fast}).
\begin{theorem}[Dualization of~\eqref{eq:W4}]\label{thm:dual_EUOT}
	For $\lambda> 0$, $\eta_1> 0$ and $\eta_2> 0$, the dual function of entropy-regularized UOT~\eqref{eq:W4} is
	\begin{align}\label{eq:dual_euot}
		\mathcal{D}(\beta,\gamma) \coloneqq \max_{\beta\in\mathbb R^n,\ \gamma\in\mathbb R^{\tilde n}} & - \eta_1\sum_{i=1}^n {e}^{-\nicefrac{\beta_i}{\eta_1}}  \mu_i-  \eta_2\sum_{j=1}^{\tilde n} {e}^{-\nicefrac{\gamma_j}{\eta_2}}  {\nu_j} +\, \eta_1\sum_{i=1}^n \mu_i+\,\eta_2\sum_{j=1}^{\tilde n} {\nu}_j \\ & -\frac1\lambda \sum_{i=1}^n\sum_{j=1}^{\tilde n} \mu_i ({e}^{\lambda\, \beta_i}\, {e}^{-\lambda\, d_{ij}^r}\, {e}^{\lambda\, \gamma_j}-1){\nu}_j,
	\end{align}
	which results from a strictly concave, smooth, and unconstrained optimization problem.
	Strong duality is obtained for $η₁>0$ and $η₂>0$.
\end{theorem}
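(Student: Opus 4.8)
The plan is to dualize the discrete primal of Section~\ref{sec:discrete} by the Fenchel--Rockafellar scheme, following \citet[Theorem~3.2]{chizat2018scaling}; the exponential structure of~\eqref{eq:dual_euot} then falls out of a single convex-conjugate computation, reused three times. That computation is the conjugate of the generalized Kullback--Leibler divergence in the normalization of Remark~\ref{rem:ent_lit}: for a fixed positive reference vector $m$,
\[
\sup_{a\ge 0}\ \langle p, a\rangle - \sum_i \big(a_i\log\tfrac{a_i}{m_i}-a_i+m_i\big) = \sum_i m_i\big(e^{p_i}-1\big),
\]
attained at $a_i = m_i\,e^{p_i}>0$, so that the constrained supremum coincides with the unconstrained one. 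I would verify this once by first-order optimality and then invoke it repeatedly.

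First I would split off the two marginals: introduce auxiliary variables $a_i = \sum_j \pi_{ij}$ and $b_j = \sum_i \pi_{ij}$ (the discrete versions of $\tau_{1\#}\pi$ and $\tau_{2\#}\pi$), so that the objective of Section~\ref{sec:discrete} reads
\[
\sum_{ij} d_{ij}^r\,\pi_{ij} + \tfrac1\lambda\,\KL(\pi\,\|\,\mu\otimes\nu) + \eta_1\,\KL(a\,\|\,\mu) + \eta_2\,\KL(b\,\|\,\nu)
\]
subject to the linear coupling constraints $a_i = \sum_j\pi_{ij}$ and $b_j = \sum_i\pi_{ij}$. Attaching multipliers $\beta\in\mathbb R^n$, $\gamma\in\mathbb R^{\tilde n}$ with the sign convention $+\sum_i\beta_i(a_i-\sum_j\pi_{ij})+\sum_j\gamma_j(b_j-\sum_i\pi_{ij})$ --- this is the one genuinely fiddly choice, as it is what forces $e^{-\beta_i/\eta_1}$ rather than $e^{+\beta_i/\eta_1}$ --- produces a Lagrangian that is \emph{separable}, so the infimum over the primal variables splits into independent infima over $a$, over $b$, and over $\pi$.

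Then I would evaluate the three infima with the conjugate identity above. The $a$-part gives $-\eta_1\sum_i\mu_i\big(e^{-\beta_i/\eta_1}-1\big)$ and the $b$-part $-\eta_2\sum_j\nu_j\big(e^{-\gamma_j/\eta_2}-1\big)$, which are exactly the first four terms of~\eqref{eq:dual_euot}. The $\pi$-part is
\[
\inf_{\pi\ge 0}\ \sum_{ij}\big(d_{ij}^r-\beta_i-\gamma_j\big)\pi_{ij} + \tfrac1\lambda\KL(\pi\,\|\,\mu\otimes\nu) = -\tfrac1\lambda\sum_{ij}\mu_i\nu_j\big(e^{\lambda(\beta_i+\gamma_j-d_{ij}^r)}-1\big),
\]
the last term of~\eqref{eq:dual_euot}, with minimizer $\pi_{ij}^\ast = \mu_i\nu_j\,e^{\lambda(\beta_i+\gamma_j-d_{ij}^r)}$, which doubles as the primal-recovery formula used by the algorithm. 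Summing the three pieces reproduces $\mathcal D(\beta,\gamma)$ verbatim. Smoothness is immediate, and strict concavity comes precisely from the marginal terms: each map $\beta_i\mapsto -\eta_1\mu_i e^{-\beta_i/\eta_1}$ is strictly concave once $\eta_1>0$ and $\mu_i>0$ (its Hessian is a strictly negative diagonal), the last term is concave, and their sum is therefore strictly concave in $(\beta,\gamma)$ --- which also explains why $\eta_1,\eta_2>0$ is assumed.

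I expect the main obstacle to be the qualification step rather than any calculation, since Fenchel--Rockafellar returns strong duality (no gap, dual attainment) only under a constraint qualification, and here one works on the cone of \emph{nonnegative} measures, where $\KL(\cdot\,\|\,\cdot)$ is merely lower semicontinuous and may be $+\infty$ on the boundary $a_i=0$. The clean remedy is to exhibit a strictly feasible point in the relative interior of the effective domains: the product $\mu\otimes\nu$ has strictly positive entries and its marginals are positive multiples of $\mu$ and of $\nu$, so all four divergences are finite there; Slater's condition is met, and strong duality for $\eta_1,\eta_2>0$ then follows from \citet[Theorem~3.2]{chizat2018scaling}.
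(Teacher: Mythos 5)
Your proposal is correct and takes essentially the same route as the paper, which proves Theorem~\ref{thm:dual_EUOT} simply by invoking Fenchel--Rockafellar duality through the citation to Chizat et al.\ [Theorem~3.2] --- precisely the scheme you execute. Your version is in fact more self-contained than the paper's: the KL-conjugate identity, the sign convention forcing $e^{-\beta_i/\eta_1}$, the recovered minimizer $\pi_{ij}^{*} = \mu_i \nu_j\, e^{\lambda(\beta_i + \gamma_j - d_{ij}^r)}$ (matching the paper's primal-recovery formula $\pi^{\Delta^*}$), the strict-concavity argument under $\mu_i,\nu_j>0$, and the Slater-point check with $\mu\otimes\nu$ are all consistent with the stated dual~\eqref{eq:dual_euot} and with the Sinkhorn iteration~\eqref{eq:Sinkhorn_uot_iteration} that the paper derives from it.
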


%\subsubsection{Algorithm for entropy regularized UOT problem}
%Now, we see the explicit explanation of the computational process to solve the entropy regularized problem~\eqref{eq:W4} using the dual formulation and Sinkhorn's algorithm.

The duality result~\eqref{eq:dual_euot} gives rise to the well-known Sinkhorn’s algorithm.
The unique maximizer~$(β⃰,γ⃰)$ is described by the first-order optimality condition, and by \emph{alternating optimization} (i.e., iterative procedure), we obtain a sequence $γ^{0}, β^{1},γ^{1},β^{2},‥$ as
%\begin{equation}\label{eq:Sinkhorn_ot_iteration}
%	\beta^{(\Delta+1)}=\frac1\lambda\left( \log p - \log\big(k\,{e}^{\lambda\gamma^{(\Delta)}}\big) \right)
%	\quad\text{and}\quad
%	\gamma^{(\Delta+1)}=\frac1\lambda \left( \log \tilde{p} - \log\big(k^\top\,{e}^{\lambda\beta^{(\Delta)}}\big) \right),
%\end{equation}
\begin{equation}\label{eq:Sinkhorn_uot_iteration}
	β^{Δ+1}=-{η₁ ∕ 1+η₁λ}\log\big(k e^{λγ^Δ}\odot ν\big)
	 \text{ and } 
	γ^{Δ+1}=-{η₂ ∕ 1+η₂λ}\log\big(kᵀ e^{λ β^Δ}\odot μ\big),
\end{equation}
where $Δ∈ ℕ$ is the iteration count, $\odot$ refers to the element wise dot product and $k≔ e^{-λ dʳ}$ (entrywise), and we may set $γ^{(0)}≔ \zero_{ñ}$. This is the basis for Sinkhorn's iteration of regularized UOT.
Sinkhorn’s theorem ensures that this alternating optimization routine converges to the optimal solution (cf.\ \citet{Sinkhorn1967a}).

% \paragraph{Stopping criteria.} The stopping criteria of Sinkhorn's algorithm for entropy regularized UOT is
% \begin{equation}\label{Eq:stopping_euot}
% 	\| E^{Δ⃰} \|_∞  ≔ \|β^{Δ⃰} - β^{Δ⃰-1}\|_∞ + \|γ^{Δ⃰} - γ^{Δ⃰-1}\|_∞ ≤ ϵ,
% \end{equation}
% %   \begin{equation}
% %	\| E^{\Delta^*} \|_\infty  \coloneqq \|\pi^{\Delta^*}\one_{\tilde{n}} - p\|_\infty + \|(\pi^{\Delta^*})^{\top}\one_n - \tilde{p}\|_\infty \le \epsilon,\label{Eq:stopping_eot}
% %   \end{equation}
% where $ϵ$ is the threshold and $\| E^Δ \|_∞$ is the norm of residuals, which measures the error of the iteration for each $Δ$ iteration.

	Algorithm~\ref{alg:Sinkhorn_Log_sin_uot} summarizes the computational process,  provides the optimal dual variable ($β^{Δ⃰}$ and $γ^{Δ⃰}$), and desired quantities can be computed using it.
    For instance, the proximate solution of the regularized UOT Problem~\eqref{eq:W4} is \[π^{Δ⃰}={e^{λβ^{Δ⃰ }}} k {e^{λ γ^{Δ⃰}}} ⊙ (μ⊗ ν).\]

\begin{remark}[Hyperparameter tuning and outliers]\label{rem:814}
	In practice, we often fix $η₁ = η₂ ≕ η$ to reduce the necessity of hyperparameter tuning.

	From the consideration of Remark~\ref{rem:813}, we also deduce that, for finite values of $η$, problem~\eqref{eq:W4} tends to ignore larger entries of the distance matrix~$d$.
	That is, the solution of the problem does not involve outliers of the measures in the transportation.
	Figure~\ref{fig:UOTtoOT} illustrates the impact of the parameters $η$, highlighting the transportation of outliers.
	Furthermore,
	if one prefers to strengthen the mass conservation of $μ$ ($ν$, resp.), it is achievable by increasing the marginal parameter $η₁$ ($η₂$, resp.).
	%	For instance, when $η₁ → ∞$, then $π𝟙_ñ$ approaches $μ$, which is notably not the general case of UOT.
\end{remark}
\begin{algorithm}[tbh]
	\scriptsize
	%	\SetAlgoLined
	%	\KwResult{$\beta, \gamma$ for \eqref{eq:3} }
		\KwIn{Distance  $d_{ij}$ given in \eqref{eq:dist}, \rev{non-negative vectors} $μ∈ ℝⁿ$, $ν∈ℝ^ñ$, regularization parameter $η₁$, and $η₂$, entropy regularization parameter $\lambda>0$, $r≥ 1$, and starting value $γ=(γ₁,‥,γ_ñ)$}
		%\KwOut{ERW distance
		%\[s_{r;\lambda}(P,\tilde P)\]}
		 %choose $\epsilon, [\beta_0,\gamma_0]$ and $[\beta_1,\gamma_1]$\;
		 Set
		 \begin{equation}%\label{eq:MatrixK_sin_div}
			\,k_{ij}= {e}^{-\lambda\,d_{ij}^r} i=1,‥,n, j=1,‥,ñ.%,\text{ and } {\gamma}^{(0)}=\zero_{\tilde n}.
		 \end{equation}

		 \While{\emph{stopping criteria}}{
			\If{$\Delta$ is odd}{
				\begin{equation}\hspace*{-7.0cm}
					\begin{split}
						\hspace*{3.0cm}\beta_i^{\Delta}\leftarrow& -{\frac{\eta_1}{1+\eta_1\lambda}}\label{eq:995} \Big(\log\big(\sum_{j=1}^{\tilde n}k_{ij}\, {e}^{ \lambda \, \gamma_j^{\Delta-1}}\,\nu_j\big) \Big),\quad i=1,\dots, n; \\ {\gamma}_j^{\Delta}\leftarrow &{\gamma}_j^{\Delta-1},\quad j=1,\dots, \tilde n;\\[-3ex]
					\end{split}
				\end{equation}}
			\Else{
				\begin{equation}\hspace*{-7.0cm}
					\begin{split}
						\hspace*{3.0cm}\gamma_j^\Delta\leftarrow&{-\frac{η₂}{1+η₂λ}}\Big(\log\big(\sum_{i=1}^n k_{ij}\, \mathrm e^{ \lambda \, \beta_i^{\Delta-1}}\mu_i\big) \Big),\quad j=1,\dots, \tilde{n};\label{eq:996} \\ \beta_i^\Delta\leftarrow &\beta_i^{\Delta-1},\quad i=1,\dots, n;\\[-3ex]
					\end{split}
				\end{equation}}
				increment $\Delta \leftarrow \Delta +1 $}
				\caption{Sinkhorn algorithm UOT\label{alg:Sinkhorn_Log_sin_uot}}
		\KwOut{$\beta^{\Delta^*}$ and $\gamma^{\Delta^*}$}
		%\hrulefill
		%\\\KwSty{Complexity:} {$\mathcal O(n^2 \log n )$}
	\end{algorithm}

\paragraph{Arithmetic complexity.} Even though the entropy regularization approach relaxes the computational burden, it still requires matrix-vector operation in~\eqref{eq:995} and~\eqref{eq:996} for each iteration, which are $𝓞(n²\, d)$ arithmetic operations.
The complete iteration routine of Algorithm~\ref{alg:Sinkhorn_Log_sin_uot} approximately requires \[𝓞\big(η\,λ(μ + ν)\,d\,n² \log n \,\big)\] arithmetic operations (cf.\ \citet{pham2020unbalanced}) to achieve machine precision, where $d$ is dimension.

The exposition on fast summation techniques in Section~\ref{sec:Fast_summation} below takes advantage of the specific structure of the matrix $k=e^{-λ dʳ}$ in~\eqref{eq:995} and~\eqref{eq:996}, which is known as \emph{Gibbs kernel} and/\,or kernel matrix.

\subsection{Computational aspects of MMDs}
%This section explains the computational aspects, and recalls the prominent kernels of MMDs.
For discrete measures $μ$ and $ν$ as above, the numerical computation of $\mmd²_k(μ,ν)$ in terms of the corresponding kernel function~$k$ is based on  %\[\mmd_k(A,\tilde{A})\coloneqq\sum_{i=1}^n\sum_{j=1}^{\tilde{n}}{\alpha}_i\,\tilde{\alpha}_j\, k({x}_i,\tilde{x}_j).\]  %{\coloneqq \textbf{MMK}_k(\Alpha,\tilde{\Alpha})}
\begin{equation}\label{eq:1205}
	\mmd^2_k(μ,ν) = ∑_{i=1}ⁿ ∑_{j=1}ⁿ μ_i μⱼ k(xᵢ,xⱼ)
					- 2∑_{i=1}ⁿ ∑_{j=1}^ñ μᵢ νⱼ k(xᵢ,\tilde{x}ⱼ)
					+ ∑_{i=1}^ñ ∑_{j=1}^ñ νᵢ νⱼ k(\tilde{x}ᵢ,\tilde{x}ⱼ).
\end{equation}
The important observation here is that the individual terms in~\eqref{eq:1205} constitute matrix vector multiplications, where the matrix has the same shape as in~\eqref{eq:Sinkhorn_uot_iteration}, cf.\ also Algorithm~\ref{alg:Sinkhorn_Log_sin_uot}.
%and the empirical estimate of $\mmd^2_k(A,\tilde{A})$, i.e., $\mmd^2_k(\hat{A},\hat{\tilde{A}})$, is wrriten as
%\begin{equation}
%\hspace{-0.9cm} \mmd^2_k(\hat{A},\hat{\tilde{A}}) \coloneqq \frac{1}{n^2} \sum_{i=1}^n\sum_{j=1}^{{n}} k(x_i,{x}_j) -\frac{1}{{n}\,\tilde{n}}\sum_{i=1}^n\sum_{j=1}^{\tilde{n}} k({x}_i,\tilde{x}_j) + \frac{1}{\tilde{n}^2} \sum_{i=1}^{\tilde{n}}\sum_{j=1}^{\tilde{n}} k(\tilde{x}_i,\tilde{x}_j)%\end{align}
%\end{equation}
%Please note that the aforementioned formulation also holds for probability measures~($P,\tilde{P}$).
\paragraph{Arithmetic complexity.} In order to compute $\mmd²_k(μ,ν)$ or  $\mmd²_k(P,Ꝓ)$, the standard implementation~\eqref{eq:1205} requires $𝓞(n² d)$ arithmetic operations, where $d$ is the dimension and $n≈ ñ$.

\section{Fast summation method based on nonequispaced FFT}\label{sec:Fast_summation}
This section succinctly describes the nonequispaced Fourier transform (NFFT) based fast summation technique. In a nutshell, this technique allows reducing the computational burden of matrix-vector multiplications, which is $𝓞(n²)$ arithmetic operations.
%to $\mathcal{O}(n).$
The forthcoming discussion explicitly provides the arithmetic operations of our fast algorithms, which are proposed to solve entropy regularized UOT and MMDs.
%  we achieve using the fast summation technique.
%\subsection{NFFT-based fast summation}

%\paragraph{A brief note on NFFT.}
%The standard FFT
The standard kernel matrix-vector operation resembles the multiplication of the Toeplitz matrix with the vector.
%, and assume that it is constructed using \emph{equispaced} points $x_i$ and $\tilde x_j$.
%For \emph{equispaced} points $x_i$ and $\tilde x_j$,  corresponds to the  multiplication of a Toeplitz matrix with a vector.
One can take advantage of this scenario to optimize $\mathcal{O}(n^2)$ to $\mathcal{O}( n\log n)$ arithmetic operations, using a fast algorithm (cf.\ \citet[Theorem~3.31]{plonka2018numerical}).
In other words, the fast algorithm embeds the kernel matrix into a circulant matrix, which is a special case of the Toeplitz matrix, and it diagonalizes the matrix  using the Fourier matrix. This approach leads to the  computation advancement, which is achieved using standard FFT (cf.\ \citet[pp.\ 141--142]{plonka2018numerical}).

Despite the benchmark computational advancement, standard FFT deteriorates by the restriction to \emph{equispaced} points ($x_i$, $\tilde x_j$).
Thus, the NFFT fast algorithm is introduced, which follows the similar approach for \emph{arbitrary} points, see \citet[Chapter~7]{plonka2018numerical}. We refer to \citet{nfft3} for a detailed interpretation of affiliated software.
%This extension of FFT to access arbitrary points enables efficient implementation for wide range of applications. \todo{cite martin,kai paper}
%As mentioned earlier, NFFT is an approximative algorithm, which renders stable and high precision. However, it suffers by curse of dimensionality, this makes the computation expensive in case of larger dimension, i.e., $\ge3$.

\subsection{NFFT fast summation}
The fast summation method, which is based on NFFT, makes use of the \emph{radial} distance \[	d(x,𝑥):=\| x-𝑥\|, \]
which builds on the \emph{difference} of $x$ and $𝑥$;
the distance matrix $d \in \mathbb{R}^{n\times\tilde{n}}$ is
\begin{equation}\label{eq:dist}
	d(x_i,\tilde x_j)\coloneqq \| x_i- \tilde{x}_j\|, i=1,‥,n, j=1,‥,ñ.
\end{equation}
\begin{remark}[Radial cost functions]\label{rem:distance_matrix}
	The fast summation technique presented below crucially builds on radial cost functions.
	The cost functions contemplated here are increasing functions of the genuine distance~$d$, for example~$dʳ$.
	This type of cost functions is commonly referred to as \emph{radial} transport cost.
\end{remark}
%, and we recall that $\|\cdot\|$ denotes the Euclidean norm or $2$-norm.
%\paragraph{Approximation of matrix-vector multiplication of Sinkhorn's iteration.}
%\begin{example}[Gaussian kernel function~($r=2$)]
%	\begin{equation}\label{eq:ex_gauss_kernel_1} \big(k\alpha\big)_{j} \ =\sum_{i=1}^{n} \alpha_i\, {e}^{-\lambda \| x_i -\tilde x_j\|^2},\quad  j=1,\dots,\tilde n
%	\end{equation} with $\alpha = [\alpha_1,\alpha_2, \dots,\alpha_n]^{\top}.$
%	Now we write Equation~\eqref{eq:ex_gauss_kernel_1} in terms of kernel function $\mathcal K(x) = \mathrm  {\mathrm e}^{-\lambda \| x\|^2}$ as
%	\begin{equation}\label{eq:ex_gauss_kernel_2} \big(k\alpha\big)_{j}   \coloneqq \sum_{i=1}^{n} \alpha_i\,  \mathcal K(x_i -\tilde x_j)\quad  j=1,\dots,\tilde n
%	\end{equation}
%\end{example}
%\subsection{The ansatz of NFFT based fast summation}

The core concept of \emph{fast summation} is to efficiently approximate the radial kernels, i.e.,
\begin{equation}\label{eq:1492}
	\mathcal K^{\text{Gauss}}(y) \coloneqq  e^{-\nicefrac{\| y\|^2}{ℓ^2}}, \quad \mathcal K^{\text{IMQ}}(y) \coloneqq  \nicefrac{1}{\sqrt{\|y\|^2+ \mathrm{c}^2}}, \quad
\mathcal K^{\text{Lap}}(y) \coloneqq e^{-\nicefrac{\| y\|}{ℓ}},\quad \mathcal K^{\text{abs}}(y) \coloneqq  \| y\|, \end{equation}
where $y \coloneqq x - \tilde{x}$ ($ℓ>0$ and $c>0$).
%The energy kernel or the absolute value kernel are different from standard radial kernel.
% \begin{remark}[Kernel with singularities]
% 	If the given kernel function lacks differentiability at some point, then it is called kernel with a singularity. For instance, the squared exponential kernel function is infinitely differentiable, i.e., Gaussian kernel function, see Figure~\ref{Fig:Dist}. Thus, it is called Kernel without singularities. In our case, the Laplace and absolute value kernels are kernel with singularities in the origin.
% 	Some other instances of kernel structure with a singularity are $\frac{1}{\| y\|}$ and  $\log\| y\|$.
% \end{remark}
\begin{figure}[!htb]
	\begin{center}
		\begin{tikzpicture}[scale=0.4]
			\begin{axis}[
				%domain=-6:6,
				%ymode=log,
				%title={$r=1$ (black), $r=2$ (blue) },
				xlabel={$y$},
				xmin=-3, xmax=3,
				ymin=-0.1, ymax=1.1,
				legend pos=north east,
				%ymajorgrids=true,
				%grid style=dashed,
				]
				\addplot[color=blue, samples=500]{exp(-abs(x*x))};
				\legend{ ${e}^{-|y^2|}$
				}
				%\addplot[color=green, samples=500]{exp(-abs(x))};
				\end{axis}
				\end{tikzpicture}
				\begin{tikzpicture}[scale=0.4]
							\begin{axis}[
								%domain=-6:6,
								%ymode=log,
								%title={$r=1$ (black), $r=2$ (blue) },
								xlabel={$y$},
								xmin=-2, xmax=2,
								ymin=0.4, ymax=1.1,
								legend pos=north east,
								%ymajorgrids=true,
								%grid style=dashed,
								]
								%\addplot[color=violet, samples=500]{exp(-x*x)};
								\legend{ $1/\sqrt{|y^2|+1}$								}
								\addplot[color=red, samples=500]{1/(sqrt{(abs(x*x)+1)})};

								\end{axis}
								\end{tikzpicture}
								\begin{tikzpicture}[scale=0.4]
									\begin{axis}[
										%domain=-6:6,
										%ymode=log,
										%title={$r=1$ (black), $r=2$ (blue) },
										xlabel={$y$},
										xmin=-3, xmax=3,
										ymin=-0.1, ymax=1.1,
										legend pos=north east,
										%ymajorgrids=true,
										%grid style=dashed,
										]
										%\addplot[color=violet, samples=500]{exp(-x*x)};
										\legend{${e}^{-|y|}$
										}
										\addplot[color=green, samples=500]{exp(-abs(x))};

										\end{axis}
										\end{tikzpicture}
						\begin{tikzpicture}[scale=0.4]
									\begin{axis}[
										%domain=-6:6,
										%ymode=log,
										%title={$r=1$ (black), $r=2$ (blue) },
										xlabel={$y$},
										xmin=-1, xmax=1,
										ymin=0., ymax=1.1,
										legend pos=north east,
										%ymajorgrids=true,
										%grid style=dashed,
										]
										%\addplot[color=violet, samples=500]{exp(-x*x)};
										\legend{ ${|y|}$}
										\addplot[color=violet, samples=500]{abs(x)};

										\end{axis}
										\end{tikzpicture}
			\caption{Gaussian~(blue), inverse multiquadric~(red), Laplace~(green) and energy or absolute value~(violet)}\label{Fig:Dist}
	\end{center}
\end{figure}
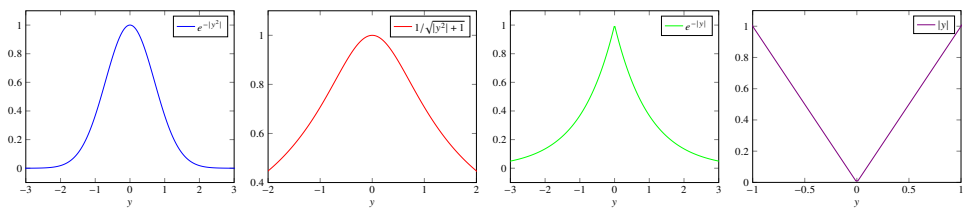
%The approximation of the kernel function $\mathcal K(y)$ supports under the prerequisite that the entries of the matrix $k$ should have the form \begin{equation}\label{eq:kernel_fun_entries}
%	k_{ij}= \mathcal K(x_i-\tilde x_j),\end{equation} where $\mathcal{X} \coloneqq \{x_i \in \mathbb{R}^d,\,i=1,\dots,{{n}}\},$ and $\mathcal{\tilde{X}} \coloneqq \{\tilde{x}_j \in \mathbb{R}^d,\,j=1,\dots,{\tilde{n}}\}.$
The fast summation approach, which performs matrix-vector operation, computes sums of the form
\begin{equation}\label{eq:fast_sum}
	s({x}_i) \coloneqq \sum_{j=1}^{{\tilde{n}}} \mathcal K(x_i-\tilde x_j)\,\tilde{\alpha}_j, \quad i=1,\dots,n,
\end{equation}
where coefficients $𝛼ⱼ∈ ℂ$ for all $j$.

\subsection{Approximation aspects of kernel approximation}
%Now, we discuss the underlying mathematical aspects of kernel approximation using the fast summation based on NFFT.

The central idea of NFFT is to efficiently approximate $𝓚(y)$ by an $h$-periodic trigonometric polynomial $𝓚_{RK}(y)$.
To illustrate, we approximate a univariate kernel using a trigonometric polynomial in one dimension via
\begin{equation}\label{eq:ansatz_NFFT}
		𝓚(y) ≈ 𝓚_{RK}(y) ≔ ∑_{\mathrm k \in \mathcal{I}_N} b_\mathrm k\, e^{2  \pi \mathrm i \mathrm k y / h}, \quad \mathcal{I}_N \coloneqq \Big\{ -\frac{N}{2},-\frac{N}{2}+1,\dots,-1,0,\dots,\frac{N}{2}-1\Big\},
	\end{equation}
	where $b_{\mathrm k}∈ℂ$ are appropriate Fourier coefficients, $y=x-\tilde{x}$ and bandwidth $N ∈ 2ℕ$.
%in $n$ given points $x_j \in \mathbb{T}^d.$
	In order to efficiently compute the matrix-vector operations~\eqref{eq:fast_sum}, which is a bottleneck of UOT and MMDs, the fast summation based on NFFT approximates~$\mathcal K$ by the trigonometric polynomial $𝓚_{RK}$ with machine precision.

Note, that the desired kernels are non-periodic functions. Hence, an approximation using a trigonometric polynomial is not forthright.   %$h$-periodic functions.
In order to obtain an $h$-periodic smooth kernel function $\tilde{\mathcal K}(y)$, we regularize ${\mathcal K}(y)$ as follows.
 %Thus, we need to shift and scale the nodes $x_i$ and $\tilde{x}_j$ to ensure that $x_i - \tilde{x}_j \in [-\frac{h}{2},\frac{h}{2}]^d$ .
%Assume that the window function $\varphi$ hast a small support, such that the evaluation of $s(x_j)$ for all $j=1,\ldots n$ takes only ${\cal O}(n)$ arithmetical operations.
%\paragraph{Regularization of ${\mathcal K}(y)$.}
%To regularize the kernel function ${\mathcal K}(y)$,  we
If the argument~$y$ satisfies $\|y\|\leq L$, we choose $h\ge 2L$.
For a good approximation, we embed $\mathcal{K}(y)$ into a smooth periodic function by
\[
	\tilde{\mathcal K}(y)\coloneqq
		\begin{cases}
			\mathcal K(y) &\text{if } y \in [-L,L], \\
			\mathcal K_\mathrm{B}(\|y\|) &\text{if } y ∈ (L,L+h),
		\end{cases}
\]
where $\mathcal K_\mathrm{B}$ is a univariate polynomial, which matches the derivatives of $\mathcal K(y)$ up to a certain degree. %, and it is assembled by two-point Taylor interpolation.a
This makes its periodic continuation $\tilde{\mathcal K}(y)$ smooth. % and it is beneficial for kernel with a singularity in the origin
%Notably, the $\tilde{\mathcal K}(y)$ is  $p-1$ times continuously differentiable, where $p \in \mathbb{N}$ is the degree of smoothness, and the Fourier coefficients decay rapidly.
We refer to \citet[Chapter~7.5]{plonka2018numerical} for more mathematical details. %The size of the regularization region  is controlled by the parameter $\varepsilon_B \in[0,\nicefrac{1}{2})$, and it is represented as \[\varepsilon_B \coloneqq \frac{1}{2}-\frac{L}{h}.\]
In the one dimensional (univariate) case, we obtain the fast computation of distance matrix-vector operation by replacing the kernel $\mathcal{K}$ in~\eqref{eq:fast_sum} with corresponding Fourier representation~\eqref{eq:ansatz_NFFT}. More specifically,
%\begin{equation}
\begin{align}\label{eq:fastsum}
	s(x_i) \approx	&\sum_{j=1}^{\tilde n} \tilde \alpha_j  \sum_{\mathrm k \in \mathcal{I}_N} b_\mathrm k
	 e^{2\pi \mathrm i \mathrm k (\bar{x}_i - \bar{\tilde{x}}_j)}\\
	=& \sum_{\mathrm k \in \mathcal{I}_N} b_\mathrm k \left(
		\sum_{j=1}^{\tilde n} \tilde \alpha_j e^{-2\pi \mathrm i \mathrm k \bar{\tilde{x}}_j}
		\right)e^{2\pi \mathrm{i} \mathrm k \bar{x}_i}, \quad i=1,\dots, n, \label{eq:1724}
\end{align}
%\end{equation}
with scaled nodes $\bar{x}_i \coloneqq h^{-1}x_i \in \mathbb{T}^d$, $\bar{\tilde{x}}_j \coloneqq h^{-1}\tilde{x}_j \in \mathbb{T}^d.$ Here, the torus $\mathbb{T}$ is
\begin{equation}\label{eq:boundaries}\mathbb{T} = \mathbb{R}\hspace{-0.3cm}\mod\mathbb{Z} \simeq \Big[-\frac{1}{2},\frac{1}{2}\Big).\end{equation}
%\[ \tilde{\mathcal K}_{\mathrm{per}} (x) \approx \tilde{\mathcal K} (x)  \coloneqq \sum_{k=1}^N b_k\, \mathrm e^{2\pi \mathrm{i} k x/h}, \] using the smooth periodic function $\tilde{\cal  K}$
%where we precompute the coefficients $b_k\in \mathbb C$ from the function $\tilde{\mathcal K}_{\mathrm{per}}$. The multivariate case follows similar with a multivariate Fourier series.
In the aforementioned setup, the inner sum in~\eqref{eq:1724} is computed by the NFFT in $\mathcal O(d\,N\log N +\tilde n)$ arithmetical operations and the outer sum with $\mathcal O(d\,N\log N + n)$.
The aforementioned ansatz generalizes to the multivariate setting.
%This ansatz generalizes to the multivariate setting. %with some tiny modifications under the above-mentioned prerequisites (cf.~\eqref{eq:kernel_fun_entries} for prerequisites). %Also, see Figure~\ref{fig:multivariate_NFFT}.
\paragraph{Curse of dimensionality}
Despite the robustness of NFFT, the evaluation is expensive for larger dimensions ($𝓧=ℝᵈ$, where $d≥4$). 
More specifically, for larger dimension, the grid belongs to the index set
\[
{\mathcal I}_{\mathcal N}:=\{-{\mathcal N}_{1}/_{2},\ldots,{\mathcal N}_{1}/_{2}-1\}\times\ldots\times\{-{\mathcal N}_{3}/_{2},\ldots,{\mathcal N}_{3}/_{2}-1\},
\] where $N=(N_1,N_2,N_3)\in2\mathbb{N}^3$, grows exponentially, which makes the evaluation expensive. 
To mitigate this burden, several research works have been presented in literature, which are predominantly based on \emph{mutual information score} and \emph{Analysis of Variance} (ANOVA) (cf.\ \citet{nestler2021learning}, \citet{potts2021approximation}).% However, in our research, we only consider dimension $d\leq3$ as it is beyond the scope of our problem of interest.

\rev{In our context, we narrow our focus up to three dimensions, recognizing it as the most prevalent scenario across a wide array of applications. 
%In addition to the existing methodologies supporting NFFT approximations for standard OT and Multi-Marginal, one and two-dimensional  for the three-dimensional NFFT approximation. 
For more detailed specifications of approximations in higher dimensions (two and three), we refer \citet[Page~98]{nestlerdiss}.
%However, we provide specific NFFT parameter 
}

%However, for dimension $d>3$, the size of the grid $\mathcal{I}_N$ expands exponentially, which makes the computation expensive, i.e., curse of dimensionality.  In order to overcome the  curse of dimensionality, we use the famous multiple kernel approach. In the forthcoming discussion, we explicitly explain the procedure to efficiently tackle the high dimension problem using NFFT.
\begin{remark}[Arithmetic complexity]\label{rem:arithmetic_complexity_NFFT}
	Evaluating~\eqref{eq:fast_sum} with Gaussian kernel~$\mathcal K^{\text{Gauss}}$ requires  $𝓞 (n\,d)$ arithmetic operations;
	with Laplacian $𝓚^{\text{Lap}}$, inverse multiquadratic $\mathcal K^{\text{IMQ}}$  and absolute value kernel $𝓚^{\text{e}}$, $𝓞 (d\,n\log n)$ arithmetic operations are required. %since they require additional near-field regularization at the point $y=0$
	We refer to \citet[Chapter~7.5]{plonka2018numerical} for a comprehensive discussion.
\end{remark}
% \begin{remark}[Curse of dimensionality]\label{rem:curseofdim}
% 	Despite the robustness of NFFT, the evaluation is expensive for larger dimensions ($𝓧=ℝᵈ$, where $d≥4$). In simple terms, NFFT suffers from curse of dimensionality.
% 	More specifically, for larger dimension, the grid $\mathcal{I}_N$ (cf.~\eqref{eq:ansatz_NFFT}) grows exponentially, which makes the evaluation expensive.
% 	To mitigate this burden, several research works have been presented in literature, which are predominantly based on \emph{mutual information score} and \emph{Analysis of Variance} (ANOVA) (cf.\ \citet{nestler2021learning}, \citet{potts2021approximation}).% However, in our research, we only consider dimension $d\leq3$ as it is beyond the scope of our problem of interest.
% \end{remark}
\begin{remark}[Heuristic stabilization]
The kernel approximation realm specifically tailored for certain boundaries, see \eqref{eq:boundaries}.
This specialization might introduce constraints aligned with our problem of interest.
In instances where the kernel function requires intensified focus within a narrower or wider interval, surpassing a threshold in boundary conditions, a prudent approach entails the integration of a scaling factor $h$, as previously elucidated, which is beneficial.
This augmentation serves to truncate the summation process~\eqref{eq:fast_sum}, thereby aligning the solution characteristics with the desired outcomes.
\end{remark}
\begin{remark}[NFFT parameters]
	The accuracy of the approximation in~\eqref{eq:fastsum} relies on various parameters, including the grid size or bandwidth $N$.
	We set those parameters in accordance with the specifications outlined in \citet{nfft3} to achieve machine precision.
	A common selection for the grid size is $N=2^8$.%
	\footnote{The internal window cutoff parameter $m$ plays a crucial role in approximation accuracy, which is independent of $n$ ($ ̃n$, resp.), and the typical choice $m=8$ yields machine precision.}
	%We fix those parameters in our proposed algorithms as mentioned in \citet{nfft3}, which lead to machine precision.
\end{remark}

\begin{algorithm}[!tbh]
	\scriptsize
	%	\SetAlgoLined
	%	\KwResult{$\beta, \gamma$ for \eqref{eq:3} }
		\KwIn{\rev{Support nodes $x\in\mathbb R^{n×d}  $ and $\tilde x \in\mathbb R^{\tilde n×d}$, non-negative vectors} $\mu\in\mathbb R^n$, $ \nu\in\mathbb R^{\tilde n}$, entropy regularization parameter $\lambda>0$, $r\ge 1$, and  starting value $\gamma=(\gamma_1,\dots,\gamma_{\tilde n})$}
		%\KwOut{ERW distance
		%\[	s_{r;\lambda}(P,\tilde P)\]}
		\While{\emph{stopping criteria}}
		{% 	Set
		% \begin{equation}\label{eq:starting_Log_sinkhorn_NFFT}
		% 	{\gamma}^{(0)}=\zero_{\tilde n}.
		% \end{equation}

		%Rescale the vector $\tilde\alpha$, i.e., set
			%\begin{equation}\label{eq:scale_log}
			%	\tilde\alpha\leftarrow c\cdot\tilde\alpha
			%\end{equation} for $c>0$ appropriate.\\
			\If{$\Delta$ is odd}{compute
			\[	t_i^{\Delta-1}\leftarrow \sum_{j=1}^{\tilde n}  e^{-\lambda \|x_i-\tilde x_j\|^r}\cdot \nu_j e^{λ\,\gamma_j^{\Delta-1}},\quad i=1,\dots, n,\] by employing the fast summation~\eqref{eq:fastsum} and set
			\begin{equation}\hspace*{-5.5cm}\begin{split} \beta_i^{\Delta}\leftarrow&{-\frac{\eta_1}{1+\eta_1\lambda}} \Big( \log t_i^{\Delta-1} \Big) ,\quad i=1,\dots, n; \\ \gamma_j^{\Delta}\leftarrow &\gamma_j^{\Delta-1},\label{Eq:even_F_sink_log}\hspace*{-0.1cm}\quad j=1,\dots, \tilde{n}.\\[-2ex]\end{split}\end{equation}
			}
			\Else{compute
				\[\tilde t_j^{\Delta-1}\leftarrow \sum_{i=1}^n  {e}^{-\lambda \|x_i-\tilde x_j\|^r}\cdot\mu_i {e}^{ \lambda \, \beta_i^{\Delta-1}},\quad j=1,\dots,\tilde n,\] by employing the fast summation  \eqref{eq:fastsum} and set
				\begin{equation}\hspace*{-5.5cm}\begin{split} \gamma_j^{\Delta}\leftarrow&{-\frac{\eta_2}{1+\eta_2\lambda}}\Big( \log\tilde t_j^{\Delta-1} \Big), \quad j=1,\dots, \tilde{n}; \\ \beta_i^{\Delta}\leftarrow &\beta_i^{\Delta-1},\label{Eq:odd_F_sink_log}\hspace*{-0.1cm}\quad i=1,\dots, n.\\[-2ex]\end{split}\end{equation}}
				increment $\Delta \leftarrow \Delta +1$
			}
		\KwResult{$\beta^{\Delta^*}$ and $\gamma^{\Delta^*}$}
		\caption{NFFT-accelerated, Sinkhorn algorithm UOT\label{alg:logSinkhornFFT_UOT}}

	\end{algorithm}
\paragraph{Arithmetic complexity of Algorithm~\ref{alg:logSinkhornFFT_UOT}.} For the sake of simplicity, we assume $n=\tilde{n}$. If $r=1$ (Laplacian kernel), Algorithm~\ref{alg:logSinkhornFFT_UOT} requires \ \[\mathcal{O}\big(\eta\lambda\,(\mu + {\nu})\,d\, n \log² n \,\big)\] arithmetic operations to achieve machine accuracy.
If $r=2$ (Gaussian Kernel), it requires \[\mathcal{O}\big(\eta\lambda\,(\mu + {\nu})\,d\,n\,\log n  \,\big)\] arithmetic operations (cf.\ Remark~\ref{rem:arithmetic_complexity_NFFT}).

\paragraph{Arithmetic complexity of NFFT-accelerated MMD.}
To compute MMD using Gaussian $ \mathcal{K}^{\text{Gauss}}$, it requires \[\mathcal{O}(n\,d)\] arithmetic operations, and to compute MMD using Laplace $\mathcal{K}^{\text{Lap}}$, inverse multiquadric $\mathcal{K}^{\text{IMQ}}$ and absolute value $\mathcal{K}^{\text{abs}}$ kernels, it requires \[\mathcal{O}(n\,\log n\, d)\] arithmetic operations. %Here, $d$ is the dimension.
\paragraph{Arithmetic complexity of upper bound of standard UOT and regularized UOT.}
%The upper bound for both standard UOT and regularized UOT, as highlighted in Sections~\ref{sec:upperbound} and~\ref{sec:EntUOTConti}, aims to unveil the upper bound without necessitating sophisticated optimization or computational routines.
The bottleneck in the Sections~\ref{sec:upperbound} and~\ref{sec:EntUOTConti} of the proposed formulation entails the inner product $⟨μ ⊗ ν | d^r⟩$, demanding $𝓞(n²d)$ arithmetic operations.
%Notably, this represents an improvement over the standard arithmetic cost of the UOT problem.
For $r=1$, however, it holds that $d(x,y)=-k_\text{e}(x,y)$, where $k_\text{e}$ is the energy kernel.
We thus can compute the inner product in \[𝓞(d\, n ㏒n)\] arithmetic operations using the fast absolute value $𝓚^{\text{abs}}$ kernel, cf.~\eqref{eq:1492}.
\section{Numerical exposition of the NFFT accelerated UOT and MMD}\label{Sec:Numerical_exposition}
In this section, we demonstrate numerical results for the approaches and methods considered in the preceding sections.
The corresponding implementations are available online.\footnote{Cf.\ \href{https://github.com/rajmadan96/FastUOTMMD.git}{https://github.com/rajmadan96/FastUOTMMD.git}} The implementation of NFFT techniques are based on the freely available repository ‘NFFT3.jl’.\footnote{Cf.\ \href{https://github.com/NFFT/NFFT3.jl}{https://github.com/NFFT/NFFT3.jl}}
All experiments were performed on a standard desktop computer with  Intel(R) Core(TM) i7-7700 CPU and 15.0 GB of RAM. Additionally, we emphasize that our proposed methods do not require expensive hardware or having access to supercomputers which is highly advantageous in terms of access to wide and low-threshold applications. %For this reason, we strictly avoid GPU implementations presented in the literature so far.

% which enables access to wide and low-threshold applications.

\subsection{UOT acceleration}
In this section, we demonstrate the enhanced performance and accuracy of regularized UOT using synthetic and benchmark data.

\subsubsection{Empirical measures\label{sec:synthetic_uot}}
We demonstrate the efficiency~(in terms of time and memory allocations) of our proposed Algorithm~\ref{alg:logSinkhornFFT_UOT} using synthetic data.

Consider the unbalanced measures
\[	μ=\sum_{i=1}^n \mu_i\,\delta_{(U^1_i,U^2_i)}  \text{ and } 
	ν=\sum_{j=1}^{\tilde n} \nu_j\,\delta_{(\tilde U_j^1,\tilde U_j^2)}\]
on $\mathbb R\times\mathbb R$, where $U^1_i$, $U^2_i$, $\tilde U_j^1$, $\tilde U_j^2$, as well as the weights $μᵢ$ and $νⱼ$, are independent samples from the uniform distribution on $[0,1]$ ($i=1,‥n$, $j=1,\dots,\tilde n$).
\begin{table}[htb]
	\footnotesize
	\begin{centering}
		\begin{tabular}{lcccccccc}
			\toprule
			$n=\tilde n$: &    \multicolumn{2}{c}{\num{1000}} & \multicolumn{2}{c}{\num{10000}} & \multicolumn{2}{c}{\num{100000}} & \multicolumn{2}{c}{\num{1000000}}\tabularnewline
			\midrule & Time & Memory & Time & Memory & Time & Memory & Time & Memory \tabularnewline & (Sec) & (MB)& (Sec) & (MB)& (Sec) & (MB)& (Sec) & (MB)\tabularnewline
			\midrule\textbf{One dimension}~($d=1$)\tabularnewline
%			$\tilde n$ & 1000 & \num{10000}& \num{100000}&\num{1000000}& \num{10000000}\tabularnewline
			\midrule
			UOT Sinkhorn's Alg.  ~\ref{alg:Sinkhorn_Log_sin_uot}
			%\textbf{\num{0.01}} & \num{2.58}&
			& \num{0.20}& \num{210.73}& \num{23.08}& \num{20655.10} & \multicolumn{4}{c}{\emph{\textbf{out of memory}}} \tabularnewline
			UOT NFFT Sinkhorn's Alg.~\ref{alg:logSinkhornFFT_UOT}
					%& \num{0.02} & \textbf{\num{1.42}}
					& \textbf{\num{0.09}} & \textbf{\num{12.64}} & \textbf{\num{0.43}} & \textbf{\num{150.28}} & \textbf{\num{4.47}} & \textbf{\num{1495.04}} & \textbf{\num{39.53}} & \textbf{\num{15022.08}} \tabularnewline[2\doublerulesep]
					\midrule\textbf{Two dimension}~($d=2$) \tabularnewline
						%			$\tilde n$ & 1000 & \num{10000}& \num{100000}&\num{1000000}& \num{10000000}\tabularnewline
						\midrule
									UOT Sinkhorn's Alg.  ~\ref{alg:Sinkhorn_Log_sin_uot}
					%& \textbf{\num{0.03}} & \num{3.04}
					& \num{0.63}& \num{256.51}& \num{25.60}& \num{25233.40} & \multicolumn{4}{c}{\emph{\textbf{out of memory}}} \tabularnewline
												UOT NFFT Sinkhorn's Alg.~\ref{alg:logSinkhornFFT_UOT}
					%& \num{0.32} & \textbf{\num{1.43}}
					& \textbf{\num{0.72}} & \textbf{\num{12.69}} & \textbf{\num{2.04}} & \textbf{\num{150.73}} & \textbf{\num{9.08}} & \textbf{\num{1507.32}} & \textbf{\num{77.24}} & \textbf{\num{15067.13}} \tabularnewline[2\doublerulesep]
					\midrule \textbf{Three dimension}~($d=3$)\tabularnewline
						%			$\tilde n$ & 1000 & \num{10000}& \num{100000}&\num{1000000}& \num{10000000}\tabularnewline
						\midrule
									UOT Sinkhorn's Alg.~\ref{alg:Sinkhorn_Log_sin_uot}
					%& \textbf{\num{0.9}} & \num{5.45}
					& \textbf{\num{10.63}}& \num{465.98}& \textbf{\num{ 67.21}}& \num{37324.46} & \multicolumn{4}{c}{\emph{\textbf{out of memory}}} \tabularnewline
												UOT NFFT Sinkhorn's Alg.~\ref{alg:logSinkhornFFT_UOT}
					%& \num{11.32} & \textbf{\num{1.43}}
					& \num{31.32} & \textbf{\num{18.32}} & \textbf{\num{100.54}} & \textbf{\num{191.13}}\ & \textbf{\num{202.17}} & \textbf{\num{1505.28}} & \textbf{\num{419.27}} & \textbf{\num{15114.24}} \tabularnewline[2\doublerulesep]
			\bottomrule
		\end{tabular}
	\par\end{centering}
\smallskip
	\caption{\label{tab:Sinkhorn3}Performance analysis of regularized UOT vs.\ NFFT accelerated--regularized UOT for varying dimensions; the accelerated algorithm manages problem sizes, which are out of reach for standard implementations. Parameters: $r=2$, $λ=20$ and $η= 1$; the best results are in bold}
\end{table}

Table~\ref{tab:Sinkhorn3} displays computation times and memory allocations of UOT Sinkhorn’s Algorithm~\ref{alg:Sinkhorn_Log_sin_uot} and UOT NFFT Sinkhorn’s Algorithm~\ref{alg:logSinkhornFFT_UOT}.
Our NFFT accelerated algorithm significantly outperforms the standard algorithm.
Notably, our proposed algorithm easily reaches problem sizes, which are completely inaccessible for standard implementations. %On the other hand, we notice a slight variation in computation time of three-dimensional problem of size $n=\tilde{n}=1000$. Although, still our proposed algorithm is memory efficient.

\subsubsection{Accuracy analysis with benchmark datasets}
To unleash the exact ground truth of our proposed algorithm, we perform an accuracy analysis using benchmark datasets.

We employ the DOTmark dataset, which is explicitly designed to validate the performance and accuracy of new OT techniques and algorithms (cf.\ \citet{schrieber2016dotmark}).
The DOTmark dataset consists of 10 subsets of gray scale images, ranging from  $32 × 32$ to $512 × 512$ resolution.

\paragraph{Transformation of images to (unbalanced) vectors.}
The grayscale image is given as a matrix, with each entry representing the intensity of a pixel in the range $[0,1]$ (black: 0, white: 1).
The standard OT problem would --~inappropriately~-- normalize the matrix.
Our approach to UOT does not require normalizing the measure.
Moreover, background pixel intensities are the $\ell_1$ distance between pixels $i$ and $j$ of the respective grids $(32 \times 32, \dots$, and $512 \times 512)$.

We compute the \emph{residual},
\[\text{Residual} = \Bigg(\frac{\|\beta^*_{\text{NFFT}}-\beta^*\|}{\|\beta^*\|} + \frac{\|\gamma^*_{\text{NFFT}} -\gamma^* \|}{\|\gamma^*\|} \Bigg),\]
where $\beta^*$ and  $\gamma^*$ are optimal dual variable of UOT Sinkhorn's Algorithm~\ref{alg:Sinkhorn_Log_sin_uot} and $\beta^*_{\text{NFFT}} $ and  $\gamma^*_{\text{NFFT}}$  are optimal dual variable of UOT NFFT Sinkhorn's Algorithm~\ref{alg:logSinkhornFFT_UOT}, to substantiate the accuracy of our proposed algorithm.
\begin{table}[!htb]
	%\scriptsize
	\begin{centering}
	\begin{tabular}{lccc}
		\toprule
		$n=\tilde n:$ &  \num{1024} & \num{4029} & \num{16384} \tabularnewline
		\midrule
		Dataset: DOTmark&  \multicolumn{3}{c}{average residual } \tabularnewline
		\midrule
		%CauchyDensity
		%& \num{1.27e-11}& \num{4.34e-12} & \num{3.95e-12}\tabularnewline
		%ClassicImages
		%& \num{1.35e-12}& \num{1.33e-12} & \num{1.31e-12}\tabularnewline
		%GRFmoderate
		%& \num{9.08e-13}& \num{9.83e-13}&\num{6.94e-13}\tabularnewline
		%GRFrough
		%& \num{2.45e-12}& \num{1.60e-13} &\num{1.23e-13}\tabularnewline
		%GRFsmooth
		%& \num{3.13e-12}& \num{1.43e-11} & \num{2.45e-12}
		%\tabularnewline
		%LogGRF
		%& \num{2.11e-11} & \num{8.57e-12} &\num{7.28e-13}
		%\tabularnewline
		%LogitGRF
		%& \num{2.17e-12} & \num{6.59e-13} &\num{4.23e-13}
		%\tabularnewline
		%MicroscopyImages
		%& \num{1.72e-12} & \num{2.29e-12} & \num{1.14e-11}
		%\tabularnewline
		%Shapes
		%& \num{3.24e-12} & \num{5.69e-12} & \num{1.59e-11}
		%\tabularnewline
		%WhiteNoise
		%& \num{1.74e-13} & \num{2.34e-13} & \num{2.21e-13}
		%\tabularnewline
		%\midrule
		%Average residual ($\%$)
		& \num{4.89e-14} & \num{3.86e-14} & \num{3.71e-14}
		\tabularnewline
		\bottomrule
	\end{tabular}
\par\end{centering}

\smallskip

\caption{Accuracy analysis: NFFT accelerated--regularized UOT; parameters $r=2$, $\lambda=20$ and $\eta=1$ \label{Tab:Accuracy_analysis2}}
\end{table}

Table~\ref{Tab:Accuracy_analysis2} comprises the results of our accuracy analysis. From the results, it is evident that our proposed algorithm provides machine accuracy as promised.
For the sake of brevity, we skip the precise results and instead present the average residual.
Although, we would like to emphasize that our proposed algorithm is capable of withstanding from smooth to rough nature of problems provided by DOTmark datasets.
\begin{remark}[Regularization parameter $λ$]
	In this experiment above we observe stable computation of NFFT-accelerated regularized UOT using Algorithm~\ref{alg:logSinkhornFFT_UOT} across the range of $λ ∈ (0, 200]$.
	As mentioned earlier, this threshold for $λ$ is adequate in achieving the approximate optimal solution within machine accuracy.
	%	Furthermore, the forthcoming experiments demonstrate the adequacy of the provided threshold in achieving optimal solutions.
 \end{remark}
\begin{remark}[New initialization of Sinkhorn's algorithm]
	The UOT experiments employ the regularized UOT upper bound ($c^*$) as the initialization factor $γ^{(0)} = c^*⋅\one_{ñ}$ for Algorithm~\ref{alg:logSinkhornFFT_UOT}, deviating from the conventional choice of $γ^{(0)} = \zero_{ñ}$. This departure resulted in a discernible enhancement, indicating a slight improvement over the standard selection.
	However, a recent study by \citet{IntSinkhorn} has examined the initialization factor of Sinkhorn’s algorithm, presents  promising results.
	Notably, their approach involves data-dependent initializers, distinguishing it from our methodology.
\end{remark}

\subsubsection{$λ$-scaling}
A robust heuristic approach for overcoming the loss of spectral information and mitigating the slowdown in the convergence process is the utilization of $λ$-scaling (also known as deformed iteration) (cf.\ \citet[Section~3.2]{schmitzer2019stabilized}, \citet[Section~3]{sharify2011solution}).
This method entails the iterative resolution of the regularized problem with progressively increasing values for the regularization parameter $λ∈ Λ⊂ ℝ₊$. %Envision a vector $\Lambda\in\mathbb R^\iota$ consisting of progressively escalating positive parameters.
%In this experiment, we demonstrate the adaptability of $λ$ scaling technique using Algorithm~\ref{alg:logSinkhornFFT_UOT_scaling}.
%We follow the same simulation setup outlined in Section~\ref{sec:synthetic_uot}. However, we perform this experiment for probability measures, which is an instance of UOT formulation also known as robust OT.

The following experiment showcases the adaptability of our proposed method with this $λ$‑scaling technique.

The experimental setup adheres to the simulation depicted in Figure~\ref{fig:6a}. 
For this investigation, we focus on probability measures—an instantiation of UOT also recognized as robust OT, see \citet{balaji2020robust}.
\begin{figure}[ht!]
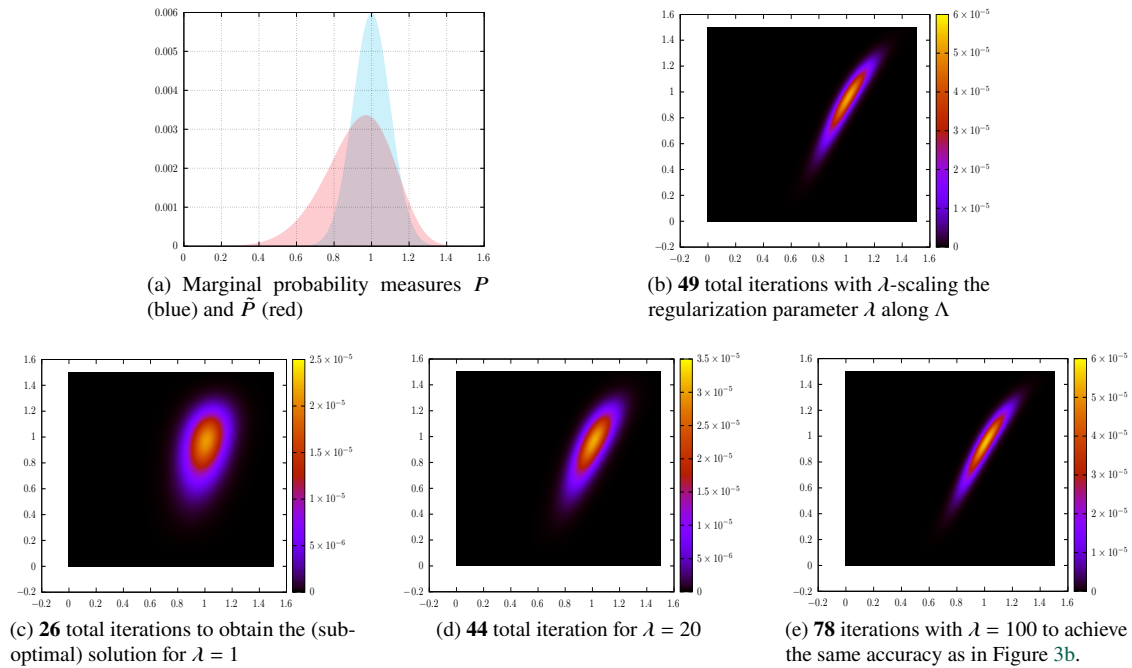
% hHtpb!; \usepackage{subfig, graphicx}
	\centering
	\subfloat[][Marginal probability measures $P$ (blue) and $Ꝓ$ (red)]
	{	\includegraphics[width=0.30\textwidth,height=0.23\textwidth]{munu.pdf}\label{fig:6a}}\hspace{2cm}
			\subfloat[][\textbf{\num{49}} total iterations with $λ$-scaling the regularization parameter $λ$ along $Λ$]
	{	\includegraphics[width=0.30\textwidth,height=0.23\textwidth]{lambdaScaling.pdf}\label{fig:6b}}

	\subfloat[][\textbf{\num{26}} total iterations to obtain the (suboptimal) solution for $λ=1$]
	{	\includegraphics[width=0.30\textwidth,height=0.23\textwidth]{lambda1.pdf}\label{fig:6c}}
	\hfill
	\subfloat[][\textbf{\num{44}} total iteration for $λ=20$]
	{	\includegraphics[width=0.30\textwidth,height=0.23\textwidth]{lambda20.pdf}\label{fig:6d}}
	\hfill
	\subfloat[][\textbf{\num{78}} iterations with $λ=100$ to achieve the same accuracy as in Figure \ref{fig:6b}.]
	{	\includegraphics[width=0.30\textwidth,height=0.23\textwidth]{lambda100.pdf}\label{fig:6e}}
	\caption{Heat maps of optimal matrices $π^*$ NFFT-accelerated UOT Sinkhorn's $λ$-scaling technique vs. NFFT-accelerated UOT Sinkhorn's Algorithm for varying regularization parameter $λ$; parameters  $λ ∈ Λ= \{1,20,100\}$ and $η_1=η_2=25$.}\label{fig:scaling}
\end{figure}

\medskip

Figure~\ref{fig:scaling} presents the heat map of optimal matrices $π^*$ along with total number of iteration for each experiments.
It demonstrates that the NFFT-accelerated UOT Sinkhorn's Algorithm~\ref{alg:logSinkhornFFT_UOT} utilizing $λ$-scaling technique converges faster than the NFFT-accelerated UOT Sinkhorn's Algorithm~\ref{alg:logSinkhornFFT_UOT}. The adaptability of this technique further enhances the performance of our proposals.
\subsection{MMDs accelerations}
This section demonstrates the performance and accuracy of NFFT accelerated MMDs. We perform the experiments using the measures $\mu$ and $\nu$ as described in Section~\ref{sec:synthetic_uot} above.
\begin{table}[!ht]
	%\scriptsize
	\begin{centering}
		\begin{tabular}{lcccc}
			\toprule
			\textbf{Kernels} &    Gaussian & Laplace & inverse multi-quadratic & energy kernel\tabularnewline
			\midrule  & \multicolumn{4}{c}{average residuals} \tabularnewline
			%\midrule\midrule  &\multicolumn{4}{c}{}\tabularnewline
%			$\tilde n$ & 1000 & \num{10000}& \num{100000}&\num{1000000}& \num{10000000}\tabularnewline
			\midrule
					%& \num{0.02} & \textbf{\num{1.42}}
					{probability vectors} & \num{2.42e-11} & \num{1.10e-6} & \num{6.03e-7} & \num{5.30e-7}  \tabularnewline[2\doublerulesep]\midrule
			%Laplace
					%& \num{0.02} & \textbf{\num{1.42}}
			%		& {\num{1.90e-3}} & {\num{1.12e-3}} & {\num{3.64e-5}} & \textbf{\num{1.10e-3}}  \tabularnewline[2\doublerulesep]
			%		\midrule
			%Inverse multi-quadratic
					%& \num{0.02} & \textbf{\num{1.42}}
			%		& {\num{8.94e-5}} & {\num{5.13e-5}} & {\num{4.03e-5}} & \textbf{\num{6.03e-5}}  \tabularnewline[2\doublerulesep]
			%		\midrule
			%Energy
			%		%& \num{0.02} & \textbf{\num{1.42}}
			%		& {\num{4.65e-5}} & {\num{8.86e-5}} & {\num{2.40e-5}} & \textbf{\num{5.30e-5}}  \tabularnewline[2\doublerulesep]
			%\midrule \midrule
			%\multicolumn{5}{c}{\textbf{arbitrary~(unbalanced) vectors}} \tabularnewline\midrule\midrule
			{arbitrary~(unbalanced) vectors}
								& \num{1.31e-8} & \num{2.04e-6} & \num{2.84e-7} & (not a distance)  \tabularnewline[2\doublerulesep]%\midrule
						%Laplace
								%& \num{0.02} & \textbf{\num{1.42}}
						%		& {\num{4.42e-3}} & {\num{1.43e-3}} & {\num{2.62e-4}} &   \tabularnewline[2\doublerulesep]
						%		\midrule
						%Inverse multi-quadratic
								%& \num{0.02} & \textbf{\num{1.42}}
						%		& {\num{1.51e-5}} & {\num{3.79e-5}} & {\num{3.22e-5}} &   \tabularnewline[2\doublerulesep]
								%\midrule
								\bottomrule
		\end{tabular}
	\par\end{centering}
\smallskip
	\caption{Accuracy analysis: NFFT accelerated MMDs; hyperparameters $\mathrm{c}=1$, $\ell=\frac{1}{2}$; dimension~$d=2$; problem size ranging from $n = \tilde{n} =\num{100}$ to $n=\tilde{n} =\num{10000}$ \label{Tab:Accuracy_analysis3}}
\end{table}
For the accuracy analysis, we conduct experiments using probability vectors and unbalanced vectors.
To validate the quality of our NFFT MMDs, we compute the errors
\[\text{Residual} = \frac{|\mmd^2_{{k}}(\mu,{\nu})-\mmd^2_{\mathcal{K}}(\mu,{\nu})|}{\mmd^2_{{k}}(\mu,{\nu})}.\]
From Table~\ref{Tab:Accuracy_analysis3}, we infer that the quality of the approximation is stable, and the errors are negligible.
\begin{table}[!ht]
	\footnotesize
	\begin{centering}
		\begin{tabular}{lcccccccc}
			\toprule
			$n=\tilde n$: &    \multicolumn{2}{c}{\num{1000}} & \multicolumn{2}{c}{\num{10000}} & \multicolumn{2}{c}{\num{100000}} & \multicolumn{2}{c}{\num{1000000}}\tabularnewline
			\midrule & time & memory & time & memory & time & memory & time & memory \tabularnewline & (sec) & (MB)& (sec) & (MB)& (sec) & (MB)& (sec) & (MB)\tabularnewline
			\midrule Gaussian kernel: $\mmd^2_{k^{\text{Gauss}}}(\mu,{\nu})$\tabularnewline
%			$\tilde n$ & 1000 & \num{10000}& \num{100000}&\num{1000000}& \num{10000000}\tabularnewline
			\midrule
			Std. computation %$\mmd^2_{k^{\text{Gauss}}}(\mu,{\nu})$
			%\textbf{\num{0.01}} & \num{2.58}&
			& \num{0.33}& \num{663.78}& \num{58.65}& \num{66375.68} & \multicolumn{4}{c}{\emph{\textbf{out of memory}}}\tabularnewline
			NFFT Acc. computation %$\mmd^2_{{\mathcal{K}}^{\text{Gauss}}}(\mu,{\nu})$
					%& \num{0.02} & \textbf{\num{1.42}}
					& \textbf{\num{0.06}} & \textbf{\num{0.23}} & \textbf{\num{0.02}} & \textbf{\num{2.29}} & \textbf{\num{0.78}} & \textbf{\num{22.89}} & \textbf{\num{1.29}} & \textbf{\num{251.77}} \tabularnewline\midrule
					Std. computation
					%\textbf{\num{0.01}} & \num{2.58}&
					& \num{1.10}& \num{806.88}& \num{73.40}& \num{80107.52} & \multicolumn{4}{c}{\emph{\textbf{out of memory}}} \tabularnewline
					NFFT Acc. computation
							%& \num{0.02} & \textbf{\num{1.42}}
							& \textbf{\num{0.17}} & \textbf{\num{1.04}} & \textbf{\num{0.15}} & \textbf{\num{3.20}} & \textbf{\num{1.55}} & \textbf{\num{32.0}} & \textbf{\num{3.39}} & \textbf{\num{320.43}} \tabularnewline\midrule
							Std. computation
							%\textbf{\num{0.01}} & \num{2.58}&
							& \textbf{\num{3.05}}& \num{801.11}& {\num{65.87}}& \num{80188.41} & \multicolumn{4}{c}{\emph{\textbf{out of memory}}} \tabularnewline
							NFFT Acc. computation
									%& \num{0.02} & \textbf{\num{1.42}}
									& \num{14.04} & \textbf{\num{0.41}} & \textbf{\num{16.73}} & \textbf{\num{135.68}} & \textbf{\num{17.20}} & \textbf{\num{173.20}} & \textbf{\num{34.96}} & \textbf{\num{543.99}} \tabularnewline[2\doublerulesep]
									\midrule Laplace kernel: $\mmd^2_{k^{\text{Lap}}}(\mu,{\nu})$  \tabularnewline
						%			$\tilde n$ & 1000 & \num{10000}& \num{100000}&\num{1000000}& \num{10000000}\tabularnewline
						\midrule
			Std. computation
			%\textbf{\num{0.01}} & \num{2.58}&
			& \num{0.36}& \num{640.89}& \num{58.15}& \num{64081.92} & \multicolumn{4}{c}{\emph{\textbf{out of memory}}} \tabularnewline
			NFFT Acc. computation
					%& \num{0.02} & \textbf{\num{1.42}}
					& \textbf{\num{0.17}} & \textbf{\num{0.24}} & \textbf{\num{0.06}} & \textbf{\num{2.29}} & \textbf{\num{0.27}} & \textbf{\num{22.91}} & \textbf{\num{1.72}} & \textbf{\num{228.88}} \tabularnewline
						\midrule
						Std. computation
						%\textbf{\num{0.01}} & \num{2.58}&
						& \num{1.56}& \num{779.59}& \num{72.00}& \num{77819.90} & \multicolumn{4}{c}{\emph{\textbf{out of memory}}} \tabularnewline
						NFFT Acc. computation
								%& \num{0.02} & \textbf{\num{1.42}}
								& \textbf{\num{0.19}} & \textbf{\num{1.06}} & \textbf{\num{0.22}} & \textbf{\num{3.20}} & \textbf{\num{1.22}} & \textbf{\num{32.04}} & \textbf{\num{3.75}} & \textbf{\num{320.43}} \tabularnewline
								\midrule
								Std. computation
						%\textbf{\num{0.01}} & \num{2.58}&
						& \textbf{\num{5.53}}& \num{778.22}& {\num{67.15}}& \num{77833.216} & \multicolumn{4}{c}{\emph{\textbf{out of memory}}} \tabularnewline
						NFFT Acc. computation
								%& \num{0.02} & \textbf{\num{1.42}}
								& 	{\num{13.30}} & \textbf{\num{0.41}} & \textbf{\num{17.22}} & \textbf{\num{4.86}} & \textbf{\num{18.78}} & \textbf{\num{41.94}} & \textbf{\num{33.16}} & \textbf{\num{412.73}} \tabularnewline[2\doublerulesep]
								\midrule IMQ kernel: $\mmd^2_{k^{\text{IMQ}}}(\mu,{\nu})$%$𝓧=ℝ³$
					\tabularnewline\midrule Std. computation
					%\textbf{\num{0.01}} & \num{2.58}&
					& \num{0.43}& \num{595.11}& \num{65.34}& \num{59509.76} & \multicolumn{4}{c}{\emph{\textbf{out of memory}}} \tabularnewline
					NFFT Acc. computation
							%& \num{0.02} & \textbf{\num{1.42}}
							& \textbf{\num{0.06}} & \textbf{\num{0.25}} & \textbf{\num{0.67}} & \textbf{\num{2.52}} & \textbf{\num{0.49}} & \textbf{\num{25.18}} & \textbf{\num{2.23}} & \textbf{\num{228.81}} \tabularnewline
							\midrule
							Std. computation
						%\textbf{\num{0.01}} & \num{2.58}&
						& \num{0.90}& \num{745.58}& \num{75.66}& \num{73242.62} & \multicolumn{4}{c}{\emph{\textbf{out of memory}}} \tabularnewline
						NFFT Acc. computation
								%& \num{0.02} & \textbf{\num{1.42}}
								& \textbf{\num{0.24}} & \textbf{\num{1.08}} & \textbf{\num{0.25}} & \textbf{\num{3.43}} & \textbf{\num{1.03}} & \textbf{\num{34.33}} & \textbf{\num{4.07}} & \textbf{\num{343.32}} \tabularnewline
								\midrule
						%			$\tilde n$ & 1000 & \num{10000}& \num{100000}&\num{1000000}& \num{10000000}\tabularnewline

						Std. computation
						%\textbf{\num{0.01}} & \num{2.58}&
						& \textbf{\num{3.87}}& {\num{732.44}}& \textbf{\num{70.90}}& \num{73267.2} & \multicolumn{4}{c}{\emph{\textbf{out of memory}}} \tabularnewline
						NFFT Acc. computation
								%& \num{0.02} & \textbf{\num{1.42}}
								& {\num{13.76}} & \textbf{\num{0.43}} & \textbf{\num{20.63}} & \textbf{\num{135.66}} & \textbf{\num{28.10}} & \textbf{\num{175.25}} & \textbf{\num{32.63}} & \textbf{\num{566.64}} \tabularnewline[2\doublerulesep]
								\midrule Energy kernel: $\mmd^2_{k^{\text{e}}}(P,\tilde{P})$%$𝓧=ℝ³$
					\tabularnewline
					\midrule
					Std. computation
			%\textbf{\num{0.01}} & \num{2.58}&
			& \num{0.47}& \num{595.13}& \num{54.78}& \num{51502.36} & \multicolumn{4}{c}{\emph{\textbf{out of memory}}} \tabularnewline
			NFFT Acc. computation
					%& \num{0.02} & \textbf{\num{1.42}}
					& \textbf{\num{0.06}} & \textbf{\num{0.27}} & \textbf{\num{0.08}} & \textbf{\num{2.60}} & \textbf{\num{0.53}} & \textbf{\num{25.12}} & \textbf{\num{1.25}} & \textbf{\num{251.71}} \tabularnewline
								\midrule
								Std. computation
						%\textbf{\num{0.01}} & \num{2.58}&
						& {\num{2.65}}& \num{762.13}& {\num{61.69}}& \num{73274.36} & \multicolumn{4}{c}{\emph{\textbf{out of memory}}} \tabularnewline
						NFFT Acc. computation
								%& \num{0.02} & \textbf{\num{1.42}}
								& \textbf{\num{0.12}} & \textbf{\num{2.28}} & \textbf{\num{0.20}} & \textbf{\num{3.51}} & \textbf{\num{1.09}} & \textbf{\num{35.10}} & \textbf{\num{3.44}} & \textbf{\num{343.21}} \tabularnewline\midrule
								Std. computation
						%\textbf{\num{0.01}} & \num{2.58}&
						& \textbf{\num{0.58}}& \num{734.39}& \num{62.43}& \num{73236.48} & \multicolumn{4}{c}{\emph{\textbf{out of memory}}} \tabularnewline
						NFFT Acc. computation
								%& \num{0.02} & \textbf{\num{1.42}}
								& \num{13.75} & \textbf{\num{0.46}} & \textbf{\num{15.21}} & \textbf{\num{36.10}} & \textbf{\num{17.39}} & \textbf{\num{45.49}} & \textbf{\num{36.18}} & \textbf{\num{436.88}} \tabularnewline[2\doublerulesep]
			\bottomrule
		\end{tabular}
	\par\end{centering}
\smallskip
	\caption{\label{tab:Sinkhorn4}Performance analysis (computation time and memory allocation) of MMD for varying kernels: standard computation vs.\ NFFT-accelerated of MMDs; dimension $d=1,2,3$ in subsequent order for each kernel;  the hyperparameters are $\mathrm{c}=1$, $ℓ=\frac{1}{2}$; the best results are in bold; NFFT accelerated computation outperforms the standard computation, although in dimension $d=3$, it is less significant for problem size $n=\tilde{n}=1000$ }
\end{table}

We then illustrate the performance of NFFT accelerated MMDs.  Table~\ref{tab:Sinkhorn4} (below) comprehends the performance of the standard MMDs and NFFT accelerated MMDs using different problem size and dimensions~$(d=1,2,3)$.
It evidently confirms that the performances of our NFFT accelerated MMDs are significantly better than the standard computations.
Nevertheless, we notice that NFFT accelerated computations are slightly expensive (in terms of time allocation) for problem size $n=\tilde{n}=1000$ in dimension $d=3$. However, this slightly expensive time allocation is negligible when compared to memory allocations of standard computations.
\begin{revise}
\subsubsection*{Convergence of 1-UOT to energy distance}

Now, we empirically validate Corollary~\ref{cor:477} by utilizing synthetic data.
For this experiment, we fix $λ=0.001$, and consider empirical probability measures $P$ and $\tilde{P}$ on the closed interval $[0, 1]$ for uniformly samples.

\begin{figure}[!htb]
	\centering
	\includegraphics[width=0.6\textwidth]{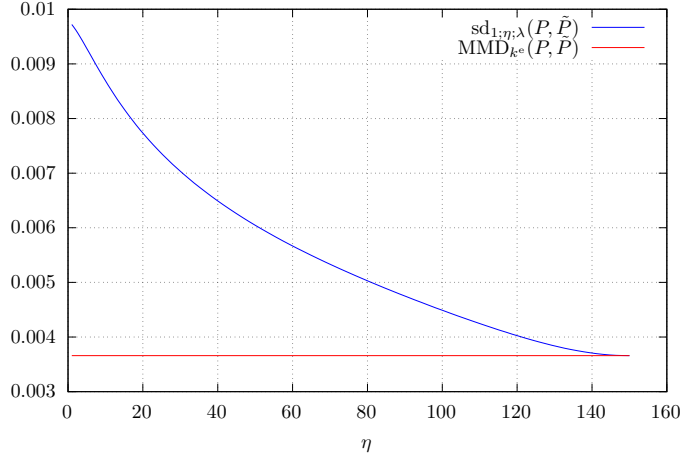}
	\caption{\rev{$\mathrm{sd}_{1;\eta;\lambda}(P,\tilde{P})$ approaches $\mmd_{k^{\text{e}}}(P,\tilde{P})$ for increasing penalization parameters~$\eta$, cf~\eqref{eq:586} ($λ=0.001$)}\label{fig:uot_MMD}}
\end{figure}

Figure~\ref{fig:uot_MMD} illustrates $sd_{1;\eta;\lambda}(P,\tilde{P})$ and $\mmd_{k^{\text{e}}}(P,\tilde{P})$ for various values of the marginal regularization parameter. This visualization corroborates Corolary~\ref{cor:477}.

\subsection{Disparity between MMD and transportation problems}

Propositions~\ref{thm:666}~and~\ref{thm:863} establish  Hölder continuity between MMD and transportation problem (Wasserstein distance and UOT).
This section investigates into scrutinizing the disparities and inequalities of the proposals across different parameter settings.

\paragraph{Probability measures, cf.\ Propositions~\ref{thm:666}.}
We consider empirical probability measures $P$ and $Q$ on the closed interval $[0, 1]$ following a uniform distribution.
The observed average disparities from 1000 simulations in Table~\ref{Tab:DisMMDWas} confirm inequality~\eqref{eq:701} in Proposition~\ref{thm:666}.
 \begin{table}[!htb]
	%\scriptsize
	\begin{centering}
		\begin{revise}
	\begin{tabular}{lccc}
		\toprule
		$n=\tilde n$: &  \num{100} & \num{500} & \num{1000} \tabularnewline
		\midrule
%		&  \multicolumn{3}{c}{average disparity } \tabularnewline
%		\midrule
		$\rhs-\lhs $& \num{0.2248} & \num{0.1906} & \num{0.2481}
		\tabularnewline
		\bottomrule
	\end{tabular}
	\end{revise}
\par\end{centering}

\smallskip

\caption{\rev{Average disparity (\num{1000} simulations) between $\lhs\coloneqq \mmd_{k^{\text{Gauss}}}(P,Q)$ and $\rhs≔c w_{2α}(P,Q)^{α}$ for the Gaussian kernel, which has parameters $α=1$, $c=2$ and $ℓ=1$ }\label{Tab:DisMMDWas}}
\end{table}
% We notice that the inequality~\eqref{eq:701} remains consistent across different parameter configurations.
% Moreover, to comprehend the disparity between $\mmd_{k^{\text{Gauss}}}(P,Q)$ and $c w_{2α}(P,Q)^{α}$, we conduct an analysis by keeping parameters fixed. Table~\ref{Tab:DisMMDWas} presents the observed average disparities from 1000 simulation.
%We observe that inequality remains stable for various parameters. Furthermore, to understand the disparity between $\mmd_k(P,Q)$ and $c w_{2α}(P,Q)^{α}$, we perform analysis by fixing parameters. Table~\ref{Tab:DisMMDWas} reports disparity.% of Proposition~\ref{thm:666}. 
\paragraph{Unbalanced measures, cf.\ Propositions~\ref{thm:863}.}
For synthetic data as described in Section~\ref{sec:synthetic_uot},
Table~\ref{Tab:DisMMDUOT} displays the average disparities observed from 1000 simulations with fixed parameters.
The results substantiate the observations in inequality~\eqref{eq:704}.
%Similar to the previous experiment, we observe that the inequality remains stable for different parameter configurations. 
%Table~\ref{Tab:DisMMDUOT} presents the observed average disparities from 1000 simulation for fixed parameters.
\begin{table}[!htb]
	%\scriptsize
	\begin{centering}
		\begin{revise}
	\begin{tabular}{lccc}
		\toprule
		$n=\tilde n$: &  \num{100} & \num{500} & \num{1000} \tabularnewline
		\midrule
%		&  \multicolumn{3}{c}{average disparity } \tabularnewline
%		\midrule
		$\rhs-\lhs $& \num{0.2867} & \num{0.2231} & \num{0.1929}
		\tabularnewline
		\bottomrule
	\end{tabular}
	\end{revise}
\par\end{centering}
%\smallskip
\caption{\rev{Average disparity (\num{1000} simulations) between $\lhs\coloneqq\mmd_{k^{\text{Gauss}}}(μ,ν)$ and $\rhs\coloneqq c⋅ \sqrt{u^*}⋅\uot_{2α;\nicefrac{η}{c²} }(μ,ν)^{α} \mmd_{k^{\text{Gauss}}}(μ,μ^*)+ \mmd_{k^{\text{Gauss}}}(ν,ν^*)$ (parameters $α=1$, $c=2$  $ℓ=1$, and $η= 8$, cf.\ Table~\ref{tab:0})}\label{Tab:DisMMDUOT}}
\end{table}

\begin{remark}
	The disparity analysis aims to uncover the gap between the MMD, Wasserstein distance and UOT for standard parameter choices. However, it's worth noting that, for instance, the average disparity in Tables~\ref{Tab:DisMMDWas} and~\ref{Tab:DisMMDUOT} diminishes as $ℓ↗∞$ (cf.\ Table~\ref{tab:0}), which typically isn't the most relevant scenario.
\end{remark}

\begin{remark}[Proposition~\ref{prop:547}]
	Based on the experimental observations aligned with the theoretical findings from Proposition~\ref{prop:547} (first assertion), we note that as $η₁= η₂↘ 0$, the Wasserstein distance is recovered, while as $η₁= η₂↗ ∞$, the disparity between $\uot_{r;η}(μ,ν)$ and $u⋅ w_{r}(P,Q) + η_1 Ⅾ(P‖ μ)+ η_2 Ⅾ(Q‖ ν)$ increases.
	However, it is noteworthy that the experiment for both the assertion confirms the theoretical underpinnings consistently across varying parameters.
\end{remark}
\end{revise}
\subsection{Existing approaches: a comparative exploration}
% This section explores state-of-the-art algorithms of regularized UOT and MMD.
The subsequent discussion is a comparative exploration of state-of-the-art algorithms and other prominent fast algorithms with our NFFT accelerated Sinkhorn’s UOT and MMD.

\subsubsection{State-of-the-art algorithms}
%Our focus in this research centers on the approximation of UOT through a regularization scheme and the Sinkhorn algorithm.
Existing literature highlights remarkable achievements in solving transportation problems using advancements in standard optimization techniques (cf.\ \citet{gottschlich2014shortlist}, \citet{schmitzer2016sparse}).
For concise implementations of such approaches, we reference to the R package \citet{RTransport}.

\begin{table}[htb]
	\footnotesize
	\begin{centering}
		\begin{tabular}{lcccc}
			\toprule
			$n=\tilde n$: &    \multicolumn{2}{c}{\num{50}} & \multicolumn{2}{c}{\num{100}}\tabularnewline
			\midrule Dataset: synthetic data & time & memory & time & memory \tabularnewline & (sec) & (MB)& (sec) & (MB)\tabularnewline
			%\midrule\textbf{One dimension}~($d=1$)\tabularnewline
%			$\tilde n$ & 1000 & \num{10000}& \num{100000}&\num{1000000}& \num{10000000}\tabularnewline
			\midrule
			Network flow
			%\textbf{\num{0.01}} & \num{2.58}&
			& \textbf{\num{0.98}}& \num{164.73}& \num{7.08}& \num{3509}\tabularnewline
			Revised simplex
					%& \num{0.02} & \textbf{\num{1.42}}
					& \num{1.84} & \num{221.67} & \num{44.68} & \num{150.28}
					\tabularnewline
					Algorithm~\ref{alg:logSinkhornFFT_UOT}
					%& \num{0.02} & \textbf{\num{1.42}}
					& \num{1.19} & \textbf{\num{0.44}} & \textbf{\num{3.43}} & \textbf{\num{0.61}}
					\tabularnewline[2\doublerulesep]
			\bottomrule
		\end{tabular}
	\par\end{centering}
\smallskip
	\caption{\label{tab:RPackage}Performance analysis.
	The \emph{network flow} exhibits faster computation for smaller problem instances; for larger problem instances, our algorithm demonstrates notable supremacy in memory and time allocation. Moreover, our algorithm can handle larger scale problems, which are unfeasible for standard and traditional methods.
	The parameters are $r=2$, $λ=20$ (only for Algorithm~\ref{alg:logSinkhornFFT_UOT}) and $η= 1$; the best outcomes are in bold.}
\end{table}

Table~\ref{tab:RPackage} presents a comparative analysis of time and memory performance between the \emph{unbalanced} function from the R package and our proposed Algorithm~\ref{alg:logSinkhornFFT_UOT}.

The results demonstrate the computational superiority of our approach.
%Our proposed method accurately captures the precise nature of the regularized UOT setup.
The state-of-the-art algorithms from the R package, employing the \emph{network flow} algorithm and \emph{revised simplex algorithm} methods, are designed for solving the standard UOT problem, which do not require regularization.
This aspect can be advantageous when working with small-scale or intermediate-scale problems.

%On the other hand, in spite of significant theoretical intricacies, the computational aspects of the MMD is not well studied yet like UOT. As described in Section 1, some approaches are considered in terms of computational aspects, we compare them with our proposed approach in upcoming discussion.
%In contrast, despite the considerable theoretical intricacies,
The computational aspects of MMD have not been studied as extensively as those of UOT.
The forthcoming discussion theoretically compares the prominent approaches with our proposed approach based on NFFT.

\subsubsection{Nyström approximation}
In terms of computational methodologies, the utilization of Nyström approximation emerges as imperative, especially in the context of approximating large matrices within diverse machine learning and data analysis applications (cf.\ \citet{NysVsRandFour}).
The Nyström method builds on a low rank approximation of the kernel matrix.
%and random Fourier features
This method gain prominence when prioritizing computational efficiency is paramount.
Its applicability extends to the realms of regularized OT and MMD, where it yields good results (cf.\ \citet{NEURIPS2019_f55cadb9}, \citet{cherfaoui2022discrete}).
%, \citet{zhao2015fastmmd}
Nevertheless, the Nyström approximation approach within the framework of regularized OT and MMD is not without limitations, which include the following:
%In terms of discussion of computational techniques, a low rank method called Nyström approximation is unavoidable.
%More concisely, for approximating large matrices in various machine learning and data analysis applications, particularly when computational efficiency is a critical consideration, this technique is considered. This approach is also considered in the setup of regularized UOT and MMD, and it also renders impressive results.
\begin{description}
 	\item[Loss of spectral information.] Nyström approximation involves selecting a subset of data points to construct a low-rank approximation. This process may lead to a loss of spectral information in the kernel matrix.
	 %In particular, when the selected subset doesn't adequately represent the full range of eigenvalues, potentially impacting the accuracy of the approximation.
 	\item[Computational complexity.] It still involves matrix inversion and multiplication, which can be computationally expensive for large datasets.
 	\item[Sensitivity to subset selection.] The performance of the Nyström approximation is sensitive to the choice of the subset of data points used for the low-rank approximation.
		In certain cases, a suboptimal selection of these points results in a less accurate representation of the original kernel matrix.
 	\item[Applicability to non-uniform distributions.] The method may not perform optimally for datasets with non-uniformly distributed points. %, as it assumes a uniform distribution of points for subset selection.
 	%\item[Limited Generalization.] Nyström approximation is derived based on the assumption of certain properties in the data, and its generalization to diverse datasets or different types of matrices may be limited.
\end{description}
\subsubsection{Random Fourier features}
On the other hand, for faster computation, the random Fourier features~(RFF) is considered in the context of MMD (cf.\ \citet{zhao2015fastmmd}), where it shows better performance.

However, the RFF is also not without limitations in the MMD setup:
\begin{description}
	\item[Loss of spatial structure.] RFF-based approximations may not fully capture the spatial structure of the original data, especially in MMD, where the spatial arrangement of features is crucial.
	%In particular, when the selected subset doesn't adequately represent the full range of eigenvalues, potentially impacting the accuracy of the approximation.
	\item[Quality of approximation.] The quality of the approximation introduced by RFF rely on random projections to approximate feature maps, and while they offer computational efficiency for MMD problems, the random nature of the projections may lead to suboptimal approximations.
	\item[Increased computational cost.] RFF introduce additional computational costs, particularly in terms of matrix-vector multiplications, due to the need to compute the random feature mappings for each data point.
	\item[Suboptimal for sparse data.] RFF may not be the most efficient choice for sparse data matrices, as the random features approach might not fully exploit the sparsity inherent in the data during matrix-vector multiplications.
	%\item[Limited Generalization.] Nyström approximation is derived based on the assumption of certain properties in the data, and its generalization to diverse datasets or different types of matrices may be limited.
\end{description}

\subsubsection{NFFT based fast summation}
Our methodology for regularized UOT and MMD, leveraging the NFFT, theoretically assures efficient performance even in the limitations mentioned above, see \citet[Chapter~7.2]{plonka2018numerical}.
Some pivotal supremacies are highlighted in following:
\begin{description}
	\item[Efficiency in large-scale computations.]
	The NFFT fast summation technique is known for its efficiency in large-scale computations, making it well-suited for handling datasets with a substantial number of data points in regularized UOT and MMD.
	NFFT can outperform Nyström and RFF when dealing with extensive datasets, as it offers computational advantages in terms of both time and memory.
	\item[Improved accuracy in approximation.] NFFT often provides a more accurate approximation of kernelized functions compared to Nyström and random Fourier features.
	The NFFT contributes to improved accuracy in approximating the feature space, leading to better representation of complex relationships within the data.
	\item[Preservation of structural information.] NFFT is designed to preserve the structural information present in the data, making it particularly advantageous for regularized UOT and MMD, where capturing intricate details in the feature space is crucial.
	Unlike RFF methods, NFFT aims to maintain the essential characteristics of the data during the approximation process.
	\item[Utilization of nonequispaced data.] The adaptability of non-equispaced data by NFFT contributes to its effectiveness, allowing for more flexible and adaptive to wide range of applications.
	%This is particularly advantageous in scenarios where equispaced points may not capture the characteristics of the data optimally.
	%\item[Limited Generalization.] Nyström approximation is derived based on the assumption of certain properties in the data, and its generalization to diverse datasets or different types of matrices may be limited.
\end{description}
\begin{revise}
\begin{remark}[Three dimension approximation]
	The adaptation of existing methodologies supporting one and two-dimensional NFFT approximations for standard OT and Multi-Marginal setups is feasible for the three-dimensional NFFT approximation.
\end{remark}
\end{revise}
%Furthermore, the stabilization techniques outlined in Section~\ref{sec:stability} do not compromise the generality of the NFFT approximation. They ensure an efficient stabilization procedure capable of handling unbalanced measures with machine precision.

\section{Summary\label{Sec:summary}}
%This paper predominantly addresses accelerated computations of prominent formulations, which compare unbalanced measures (i.e., non-probability measures).
Our work delivers significant advancements in the realm of comparing unbalanced measures (i.e., non-probability measures).
%We introduce straightforward upper bounds for both the standard UOT problem and its regularized counterpart, eliminating the need for complex optimization routines. Emphasizing the pivotal role of initialization in regularized UOT, we utilize our upper bounds to enhance convergence and stability.
%\todo{further work}
%The unbalanced optimal transport formulation is systematically exposed using Bregman divergence.
%In addition, we present robust connections between MMD and Wasserstein distance, and upper bound for UOT problems, which is theoretically as well as computationally advantageous in real-world applications.
\rev{We have introduced robust inequalities, which elucidate the nuanced relationship between Wasserstein distance, UOT and MMD (probability and non-probability measures).

% Those inequalities are concisely expressed as
% \begin{align}
% 	\mmd_k(P,Q)≤ c w_{2α}(P,Q)^{α} ≤ w_{2α;c^2λ}(P,Q)^{α},
% \end{align}
% and
% \begin{align}
% 	\mmd_k(μ,ν) &≤ c⋅ \sqrt{u^*}⋅\uot_{2α;\nicefrac{η}{c²} }(μ,ν)^{α} + \mmd_k(μ,μ^*)+ \mmd_k(ν,ν^*) 	⏎ &≤c⋅ \sqrt{u^*}⋅\uot_{2α;\nicefrac{η}{c²};c^2λ}(μ,ν)^{α} + \mmd_k(μ,μ^*)+ \mmd_k(ν,ν^*) 	⏎
% 	&≤ c⋅ \sqrt{u^*}⋅ c^*_{2α;\nicefrac{η}{c²};c^2λ} + \mmd_k(μ,μ^*)+ \mmd_k(ν,ν^*). 
% \end{align}
}
%We emphasize again the these inequalities enhances theoretical and computational regime to handle discrepancies between probability measures and non probability measures.
Furthermore, we explore convergence of regularized UOT to the energy distance (MMD).%, contributing novel insights to the field.

Introducing a fast summation technique based on NFFT, our approach enables fast matrix-vector operations, significantly accelerating the implementation of regularized UOT and MMDs while ensuring computational stability, see Table~\ref{tab:summary}.
Diverse kernel options, including inverse multiquadratic and energy kernels, are presented.
\begin{table}[h!t]
	% \footnotesize
	\begin{centering}
	\begin{tabular}{lcc}
		\toprule
		Kernels & standard computation  & NFFT computation  \tabularnewline
		\midrule
		& \multicolumn{2}{c}{Regularized UOT}  \tabularnewline
		 \midrule
		Laplace~($r=1$)
		& $\mathcal{O}\big(\eta\lambda\,(\mu + {\nu})\, d\,n^2 \log n \,\big)$& $\mathcal{O}\big(\eta\lambda\,(\mu + {\nu})\,d\,n\,\log² n \,\big)$ \tabularnewline
		Gaussian~($r=2$)
		& $\mathcal{O}\big(\eta\lambda\,(\mu + {\nu})\, d\,n^2 \log n \,\big)$& $\mathcal{O}\big(\eta\lambda\,(\mu + {\nu})\,d\,n\,(\log n)\,\big)$ \tabularnewline
		\midrule
		&\multicolumn{2}{c}{MMDs}  \tabularnewline
		 \midrule
		Energy
		& $\mathcal{O}(n^2\,\log n\, d)$& $\mathcal{O}(n\,\log n\, d)$ \tabularnewline
		Gaussian
		& $\mathcal{O}(n^2\,\log n\, d)$& $\mathcal{O}(n\, d)$ \tabularnewline
		Laplace
		& $\mathcal{O}(n^2\,\log n\, d)$& $\mathcal{O}(n\,\log n\, d)$
		\tabularnewline
		Inverse multiquadratic
		& $\mathcal{O}(n^2\,\log n\, d)$ & $\mathcal{O}(n\,\log n\, d)$ \tabularnewline
		\bottomrule
	\end{tabular}
\par\end{centering}
\smallskip
\caption{Arithmetic operations of standard computation vs.\ NFFT computation\label{tab:summary}}
\end{table}
%The fast summation technique, based on NFFT, significantly reduces the arithmetic operations of regularized UOT and MMDs with machine accuracy
%All our numerical experiments demonstrate the efficiency and robustness of the algorithms proposed.

\rev{We again emphasize that these inequalities along with fast computation significantly enhance both the theoretical and computational frameworks, enabling effective management of disparities between probability and non-nonnegative (unbalanced) measures.}

Furthermore, we showcase numerical illustrations of our method, validating its robustness, stability, and properties outlined in theoretical results. These numerical demonstrations provide empirical evidence, further substantiating the efficacy of our proposals.

% We envision that the contributions articulated in our paper will act as a springboard for future theoretical explorations and applications on extensive datasets.
% Our work opens avenues for innovative developments and practical implementations, fostering a deeper understanding of the intricate landscapes within unbalanced optimal transport and maximum mean discrepancies.

The corresponding implementations are available in the following GitHub repository
\begin{center}
\url{https://github.com/rajmadan96/FastUOTMMD.git} .
\end{center}

\subsection*{Acknowledgement}

We would like to thank Franziska Nestler for helping with the implementation of fast energy kernel.

\subsection*{Ethics declarations}

\subsubsection*{Conflict of interest}
The authors have not disclosed any competing interests.
\appendix

\section{Proofs}
    \subsection{Proof of Proposition~\ref{prop:464}\label{prop:464Proof}}
	Let $π$ be any measure with marginals~\eqref{eq:466} and set $π_c≔ c⋅ π$, where $c≥0$. It follows that
	\begin{align}
		&τ_{1\#} π_c(⋅)= c⋅π(⋅×𝓧)= c⋅P(⋅)= c⋅{μ(⋅) ∕ μ(𝓧)} \text{ and}\\
		&τ_{2\#} π_c(⋅)= c⋅π(𝓧×⋅)= c⋅Ꝓ(⋅)= c⋅{ν(⋅) ∕ ν(𝓧)},
	\end{align} so that the Radon–Nkodým derivatives are constant,
	\begin{align}
		&{ⅾτ_{1\#} π_c ∕ ⅾμ}= {c ∕ μ(𝓧)}\text{ and}\\
		&{ⅾτ_{2\#} π_c ∕ ⅾν}= {c ∕ ν(𝓧)}.
	\end{align}
	It follows with~\eqref{eq:298} that the Kullback–Leibler divergences are
	\begin{align}
		&\KL(τ_{1\#} π_c‖ μ)= {c ∕ μ(𝓧)}\log⟮{c ∕ μ(𝓧)}⟯μ(𝓧) + μ(𝓧)- c \text{ and}\\
		&\KL(τ_{2\#} π_c‖ ν)= {c ∕ ν(𝓧)}\log⟮{c ∕ ν(𝓧)}⟯ν(𝓧) + ν(𝓧)- c.
	\end{align}
	Restricting the problem UOT to particular measures of the form $π_c=c⋅π$, the objective of~\eqref{eq:W3} reduces to
	\begin{align}\label{eq:2670}
		c⋅❬π| d^r❭
			&+ η₁❨{c ∕ μ(𝓧)}\log⟮{c ∕ μ(𝓧)}⟯μ(𝓧) + μ(𝓧)- c❩ \\
			&+ η₂❨{c ∕ ν(𝓧)}\log⟮{c ∕ ν(𝓧)}⟯ν(𝓧) + ν(𝓧)- c❩.
	\end{align}
	% where
	% \[	❬ π| d^r❭≔ ∬_{𝓧×𝓧} d(x,\tilde{x})ʳ π(ⅾx,ⅾ\tilde{x}).\]

	Minimizing the objective~\eqref{eq:2670} with respect to the constants $c>0$ reveals the best constant
	\[	c^*_{r;η}= e^{-{❬ π| d^r❭ ∕ η₁+η_2}}⋅ μ(𝓧)^{η₁ ∕ η₁+η₂}⋅ν(𝓧)^{η₂ ∕ η₁+η₂}\]
	and thus the assertion.

	% The second assertion follows with $π≔ {μ⊗ ν ∕ μ(𝓧)⋅ν(𝓧)}$, as $π$ has \rev{(normalized)} marginals~\eqref{eq:466}.
	\begin{remark}\label{rem:507}
		For increasing and equal regularization parameters, $η₁= η₂→ ∞$, the measure
		\[	{μ⊗ ν ∕ √{μ(𝓧)⋅ν(𝓧)}} \]
		constitutes an upper bound.
		Further, for $η₂$ constant but $η₁→∞$, say, the measure in~\eqref{eq:478} approaches $μ(𝓧)⋅π$: this measure has marginal~$μ$. Apparently, this is in line with the objective in~\eqref{eq:W3}, which forces the first marginal to approach~$μ$.
	\end{remark}

	\subsection{Proof of Proposition~\ref{prop:536}\label{prop:536_proof}}
	Observe first that the marginal measures are
	\begin{align}
		&τ_{1\#} π_c(⋅)= π_c(⋅×𝓧)= c μ(⋅) ν(𝓧)\text{ and}\\
		&τ_{2\#} π_c(⋅)= π_c(𝓧×⋅)= c μ(𝓧) ν(⋅),
	\end{align} so that the Radon–Nikodým derivatives are constant,
	\begin{equation}
		{ⅾπ_c ∕ ⅾμ⊗ν}= c, 
		{ⅾτ_{1\#} π_c ∕ ⅾμ}= c ν(𝓧) \text{ and } 
		{ⅾτ_{2\#} π_c ∕ ⅾν}= c μ(𝓧).
	\end{equation}
	It follows with~\eqref{eq:298} that
	\begin{align}
		&\KL(π_c‖ π)= c \log(c)⋅μ(𝓧)ν(𝓧)+ μ(𝓧)ν(𝓧)- c μ(𝓧)ν(𝓧),
	\intertext{and the Kullback–Leibler divergences of the marginals thus are}
		&\KL(τ_{1\#} π_c‖ μ)= c ν(𝓧)\log❪c ν(𝓧)❫μ(𝓧) + μ(𝓧)- c μ(𝓧)ν(𝓧) \text{ and}\\
		&\KL(τ_{2\#} π_c‖ ν)= c μ(𝓧)\log❪c μ(𝓧)❫ν(𝓧) + ν(𝓧)- c μ(𝓧)ν(𝓧).
	\end{align}
	Restricting the problem UOT to particular measures of the form $π_c=c⋅μ⊗ ν$, the objective of~\eqref{eq:W4} reduces to
	\begin{align}\label{eq:1117}
		c⋅❬μ⊗ ν| d❭
			&+ {1∕λ} ❨c \log(c)⋅μ(𝓧)ν(𝓧) + c μ(𝓧)ν(𝓧)- μ(𝓧)ν(𝓧)❩ \\
			&+ η₁❨c ν(𝓧)\log❪c ν(𝓧)❫μ(𝓧) + c μ(𝓧)ν(𝓧)- μ(𝓧)❩ \\
			&+ η₂❨c μ(𝓧)\log❪c μ(𝓧)❫ν(𝓧) + c μ(𝓧)ν(𝓧)- ν(𝓧)❩.
	\end{align}
	%where
	%\[	❬μ⊗ ν| dʳ❭≔ ∬_{𝓧×𝓧} d(x,\tilde x) μ(ⅾx)ν(ⅾ\tilde x).\]

	Minimizing the objective~\eqref{eq:1117} with respect to the constants $c>0$ reveals the best measure explicitly as
	\[	π⃰ ≔ c^*_{r;η;λ} ⋅ μ⊗ ν,	\] where $c⃰ $ as given in~\eqref{eq:561}
	% \[	c⃰ ≔ e^{-{λ⋅❬μ⊗ ν| dʳ❭ ∕ μ(𝓧)ν(𝓧)(1+λ(η₁+η₂)}}
	% 		μ(𝓧)^{-{λ η₂ ∕ 1+λ(η₁+η₂)}}
	% 		ν(𝓧)^{-{λ η₁ ∕ 1+λ(η₁+η₂)}}
	% \]
	is the best possible constant.
	\rev{\subsection{Proof of Corollary~\ref{cor:477}\label{cor:477Proof}}}
	Consider the debiased regularized UOT formulation~\eqref{eq:586}
	\begin{align}\label{eq:thm1}
		\mathrm{sd}_{1;η;\lambda}(P,\tilde{P}) = & \,{\uot}_{1;\eta;\lambda}({P},\tilde{P}) - \frac{1}{2}\,{\uot}_{1;\eta;\lambda}(P,{P}) - \frac{1}{2}\, {\uot}_{1;\eta;\lambda}(\tilde{P},\tilde{P}) \\ & + \frac{1}{2\,\lambda}\big(P(\mathcal{X}) - \tilde{P}(\mathcal{X})\big)^2
	\end{align} and note that the last component  vanishes as $P(\mathcal{X}) = \tilde{P}(\mathcal{X}) =1$.
	Thus, as ${\uot}_{1;\eta;\lambda}(\cdot, \cdot) \to {\uot}_{1;\eta}(\cdot, \cdot)$ for $\lambda \to \infty$, we have that
	\begin{align*} \lim_{\lambda \to \infty} \mathrm{sd}_{1;\eta;\lambda}(P,\tilde{P}) = {\uot}_{1;\eta}({P},\tilde{P}) - \frac{1}{2}\,{\uot}_{1;\eta}(P,{P}) - \frac{1}{2}\, {\uot}_{1;\eta}(\tilde{P},\tilde{P}) = {\uot}_{1;\eta}({P},\tilde{P}),
	\end{align*}
	which is the first assertion.

	For the second assertion, we observe from Definition~\ref{def:526} of ${\uot}_{1;η;λ}(P,Ꝓ)$ that
	\begin{align*}
		\lim_{η → ∞} {\uot}_{1;η;λ}(P,Ꝓ)= w_{1;λ}(P,Ꝓ),
			\end{align*}
		where ${w}_{1,λ}(P,Ꝓ)$ is defined as in~\eqref{eq:sd_OT}. Similarly, we obtain
		\begin{align*}
			\lim_{η→∞} \mathrm{sd}_{1;η;λ}(P,Ꝓ) = \mathrm{sd}_{1;λ}({P},Ꝓ)
		\end{align*}
		for the associated Sinkhorn divergences, and hence
		\begin{align*}
			\mmd_{k^{\text{e}}}(P,Ꝓ)
			= 	\lim_{λ → 0} \mathrm{sd}_{1;λ}(P,Ꝓ)
			= \lim_{λ → 0} \lim_{η→∞} \mathrm{sd}_{1;η;λ}(P,Ꝓ)
		\end{align*}
		by Lemma~\ref{lem:inter_ot_mmd}.
		This is the second assertion, which concludes the proof.
%and $\, {\uot}_{r;\eta;\lambda}(\tilde{P},\tilde{P})$ tend to $0$ for $\lambda \to \infty$ as .
%By Definition~\ref{def:526}, we know that $sd_{r;\eta;\lambda}({P},\tilde{P})$ converges to standard $UOT_{r;\eta}(P,\tilde{P})$, if $\lambda \to \infty$.
%. since by definition UOT problem recovers OT in this case
%\newblock URL \url{}
%	╭───────────────────────────────────────────────
%	│		References
%\printbibliography[heading=none]
\bibliographystyle{abbrvnat}
\bibliography{LiteraturAlois,LiteraturRaj}
\end{document}